\newcommand{\wide}{\widetilde}
\newcommand{\N}{\mathbb{N}}
\newcommand{\G}{\Gamma}
\newcommand{\ra}{\rightarrow}
\newcommand{\dehnone}[2]{\delta^{(1)}_{#1}(#2)}
\newcommand{\dehntwo}[2]{\delta^{(2)}_{#1}(#2)}
\newcommand{\area}[2]{\mathrm{Area}_{#1}{(#2)}}
\newcommand{\vol}[1]{\mathrm{Vol}(#1)}
\newcommand{\logl}{\log_{3}}
\newcommand{\stable}[2]{\mathbf {a}_{#1 #2}} 
\newcommand{\finiteness}{\mathcal{F}}
\newcommand{\ddi}[3]{\Theta_{#1}^{#2}(#3)}
\newcommand{\ddii}[3]{\Delta_{#1}^{#2}(#3)}
\newcommand{\tspace}{\rule[-.25cm]{0cm}{.75cm}}
\newcommand{\first}{\xi}
\newcommand{\second}{\nu}
\newcommand{\topbound}[1]{L_{#1}}
\newcommand{\botbound}[1]{M_{#1}}
\newcommand{\deltabound}[1]{B_{#1}}
\theoremstyle{plain}
\newtheorem{theorem}{Theorem}[section]
\newtheorem{prop}[theorem]{Proposition}
\newtheorem{lemma}[theorem]{Lemma}
\theoremstyle{definition}
\newtheorem{definition}[theorem]{Definition}
\newtheorem{remark}[theorem]{Remark}
\newtheorem{example}[theorem]{Example}
\newtheorem{observation}[theorem]{Observation}
\numberwithin{equation}{section}
\numberwithin{figure}{section}
\begin{document}

\title{Super-exponential 2-dimensional Dehn functions}

\author[J.~Barnard]{Josh Barnard}
\address{Dept.\ of Mathematics \& Statistics\\
	University of South Alabama\\
	Mobile, AL 36688}
\email{jbarnard@jaguar1.usouthal.edu}

\author[N.~Brady]{Noel Brady}
\address{Dept.\ of Mathematics\\
        University of Oklahoma\\
	Norman, OK 73019}
\email{nbrady@math.ou.edu}

\author[P.~Dani]{Pallavi Dani}
\address{Dept.\ of Mathematics\\
        Louisiana State University \\
	Baton Rouge, LA 70803}
\email{pdani@math.lsu.edu}

\date{\today}

\begin{abstract}
We produce examples of groups of type $\mathcal{F}_3$ with 2-dimensional Dehn functions of the form $\exp^n(x)$ (a tower of exponentials of height $n$), where $n$ is any natural number.  
\end{abstract}

\maketitle
\footnotetext[1]{N. Brady was partially supported by NSF grant 
no.\ DMS-0505707}

\tableofcontents

\begin{sloppypar}

\section{Introduction}

Dehn functions have a long and rich history in 
group theory and topology. The germ of the notion of  Dehn 
function was expounded by Max Dehn in his solution to the 
word problem for the fundamental groups of closed hyperbolic 
surfaces in~\cite{dehn1, dehn2}. Gromov~\cite{gromov1, 
gromov2} further developed the notion of Dehn function 
as a filling invariant for a finitely presented group, and proposed  
the investigation of higher dimensional filling invariants. 

A function $\delta\!:\N\to\N$  is called the ($1$-dimensional)  Dehn 
function of a finite presentation if it is the minimal function 
with the following property. Every word of length at most 
$x$ in the generators representing the identity element 
of the group can be expressed as a product of at most 
$\delta(x)$ conjugates of relators and their inverses. Stated more 
geometrically, 
every loop of combinatorial length at most $x$ in the universal 
cover of a presentation $2$-complex for the group, can be 
null-homotoped using at most $\delta(x)$ $2$-cells. The 
adjective $1$-dimensional refers to the fact that the 
function $\delta(x)$ measures the area of efficient disk fillings of 
$1$-dimensional spheres.  It is customary to drop the 
adjective $1$-dimensional, and to simply talk about 
Dehn functions of finite presentations. It is known 
that, up to coarse Lipschitz equivalence, the Dehn function is a 
well-defined geometric invariant of a finitely presented group.

Dehn functions are intimately connected to the solution of 
the word problem in finitely presented groups. 
For example, a group has a solvable 
word problem if and only if its  Dehn function is bounded 
by a recursive function. In particular, the existence of groups with unsolvable word problem implies that there are groups with Dehn function not bounded above by any recursive function.
We now have a  greater 
understanding, thanks to the intense research activity of the past two 
decades and in particular to the deep work of~\cite{brs}, of 
which functions can be Dehn functions of 
finitely presented groups. 
For example, combining the results of~\cite{brbr} and~\cite{brs}, we know that the set of exponents $\alpha$ for 
which $x^{\alpha}$ is coarse Lipschitz equivalent to a 
Dehn  function is dense in $\{1\} \cup [2,\infty)$. 

Following Gromov, for each integer $k \geq 1$ one can define 
$k$-dimensional Dehn functions for groups 
$G$ of type ${\mathcal F}_{k+1}$ (that is, $G$ admits a $K(G,1)$ with finite $(k+1)$-skeleton). Roughly speaking, a  $k$-dimensional 
Dehn function, 
$\delta^{(k)}_{G}(x)$, gives a minimal upper bound for 
the number of $(k+1)$-cells needed in a $(k+1)$-ball 
filling of a singular $k$-sphere in $X$ of $k$-volume 
at most $x$. 
As in the case of ordinary Dehn functions, the coarse Lipschitz 
equivalence class of a $k$-dimensional 
Dehn function is an invariant of a group of type ${\mathcal F}_{k+1}$. 
The $k$-dimensional Dehn functions are 
examples of Gromov's higher dimensional filling invariants of 
groups.  Precise definitions of the $\delta^{(k)}_{G}(x)$ are given in 
Section~\ref{sec:background}.

In recent years, there has been considerable progress in our understanding 
of which functions are  ($k$-dimensional) Dehn functions 
of groups~\cite{alonso+, awp, bbfs, bf, brid, pride-wang}. 
Combining the results of~\cite{bbfs, bf},  one now knows that for each 
$k \geq 2$,  the set of exponents 
$\alpha$ for which $x^{\alpha}$ is coarse Lipschitz equivalent to 
a $k$-dimensional Dehn function is dense in $[1, \infty)$.  
Young~\cite{young} has shown that for $k\geq 3$ there are groups with $k$-dimensional Dehn function not bounded above by any recursive function. 
What Young actually shows is that for $k \geq 2$ 
there exist groups for which the Dehn functions, defined in terms 
of admissible $(k+1)$-dimensional manifold fillings of admissible $k$-tori, are  not bounded above 
by any recursive function. For $k \geq 3$,  he then 
uses the trick introduced in Remark 2.6(4) of \cite{bbfs} to conclude that 
the Dehn functions $\delta^{(k)}$ of these groups have the same property.  
In contrast,  Papasoglu~\cite{papa-recursive} shows that $2$-dimensional Dehn functions are all bounded above by recursive functions.    In the case $k=2$, Young's example together with 
Papasoglu's result show that the Dehn function based on admissible $3$-manifold fillings 
of $2$-tori is different from the Dehn function $\delta^{(2)}$ introduced above. 
In summary, Dehn functions based on admissible $(k+1)$-ball fillings of admissible 
$k$-spheres do not have to be bounded above by recursive functions for $k\not=2$. 
The case $k=2$ is indeed special. 

Papasoglu  
asked if there were examples of groups with super-exponential 
$2$-dimensional Dehn functions.  See also the remarks by Gromov in 5.D.(6) on page 100 
of \cite{gromov2}.  
Pride and Wang~\cite{pride-wang} produced an example of a group whose $2$-dimensional 
Dehn function lies between  $e^{\sqrt x}$ and $e^x$. The purpose 
of the current paper is to describe two families of groups $\{G_{n}
\}_{n=2}^{\infty}$ and $\{H_{n}\}_{n=2}^{\infty}$ whose $2$-dimensional Dehn functions display a range of super-exponential 
behavior.

\begin{theorem}\label{main-theorem}
For every $n>0$, there exist groups $H_n$ and $G_n$ of type $\finiteness_3$, with $\dehntwo{H_n}{x}\simeq \exp^n(\sqrt x)$
and $\dehntwo{G_n}{x}\simeq \exp ^n(x)$. 
\end{theorem}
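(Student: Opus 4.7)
The plan is to build $H_n$ and $G_n$ recursively, with each step introducing one new layer of exponential behavior. I expect $H_1$ and $G_1$ to be known groups with exponential $2$-dimensional Dehn function (perhaps a variant of the Pride--Wang example, or a snowflake-type group), and the inductive step to produce $H_{n+1}$ (resp.\ $G_{n+1}$) from $H_n$ (resp.\ $G_n$) as a mapping torus or multiple HNN extension whose stable letter acts with exponential distortion on some natural subspace carrying the efficient $2$-spheres of the previous level. The difference between the $\exp^n(\sqrt{x})$ behavior of $H_n$ and the $\exp^n(x)$ behavior of $G_n$ should come from a doubling or symmetrization step, which converts a family of asymmetric efficient spheres (where area grows like the square of a controlling parameter) into a symmetric family (where area grows linearly in that parameter). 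Finiteness of type $\finiteness_3$ should be inherited from the base step since HNN extensions and semidirect products by $\Z$ preserve $\finiteness_k$ when the associated subgroups are of type $\finiteness_k$.

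\textbf{Upper bound.} The plan is to induct on $n$, constructing a $3$-ball filling of an admissible $2$-sphere $\Sigma$ in the universal cover $\widetilde X_n$ of the model complex for $G_n$ (similarly for $H_n$). I would use the recursive HNN structure: the model complex retracts onto a circle corresponding to the outermost stable letter, and the preimage of a generic point is a copy of $\widetilde X_{n-1}$. Make $\Sigma$ transverse to this family of ``sheets,'' decompose it into pieces lying in individual sheets together with annular connectors, fill each planar piece in its sheet using the inductive upper bound on $\dehntwo{G_{n-1}}{\cdot}$, and then cone off the resulting collection of $2$-spheres using the outer stable letter. The exponential factor at each inductive step corresponds to the cost of transporting material across sheets, so the bound compounds to $\exp^n$ applied to the appropriate controlling parameter.

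\textbf{Lower bound and main obstacle.} The plan is to exhibit an explicit family of admissible $2$-spheres $\Sigma_k$ in $\widetilde X_n$ of area $\asymp k^2$ (for $H_n$) or $\asymp k$ (for $G_n$) which use $k$ copies of the outermost stable letter to push an efficient $2$-sphere at level $n-1$ around a large distorted loop; by construction, the best filling at the outer level must sweep out a region whose slices are forced to realize efficient $3$-ball fillings at level $n-1$, yielding a tower-of-exponentials lower bound. To make this precise I would use a cocycle/intersection argument: define a $2$-cocycle on $\widetilde X_n$ dual to a codimension-$1$ subspace associated with the outer HNN structure (so its pairing with any $3$-chain filling $\Sigma_k$ bounds the volume of a sub-chain at level $n-1$), then iterate. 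The main obstacle is this lower bound, for two reasons: first, one must identify a family $\Sigma_k$ that is genuinely admissible (a nontrivial condition in dimension $2$, and the source of the $\sqrt{x}$ behavior in the $H_n$ case), and second, one must rule out ``shortcut'' fillings using the higher-dimensional flexibility of $3$-balls by certifying via the cocycle that every filling genuinely interacts with the lower-level efficient $3$-balls the required number of times.
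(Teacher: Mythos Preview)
Your proposal is a plausible high-level strategy, but it diverges from the paper in both directions and has a genuine gap on each side.

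\textbf{Lower bound.} You identify this as the main obstacle and propose a cocycle/intersection argument to certify that any filling must interact with lower-level efficient balls. The paper sidesteps this entirely. Because each $G_n$ and $H_n$ is built as an iterated graph of groups with $2$-dimensional edge groups, the resulting $K(\pi,1)$ is \emph{$3$-dimensional}. In a contractible $3$-complex, an \emph{embedded} $3$-ball has volume exactly equal to the filling volume of its boundary $2$-sphere (a homological argument; see Remark~\ref{rem:embedded}). So the paper's lower bound reduces to constructing explicit embedded $3$-balls and computing their volumes and boundary areas directly. Your cocycle approach might be made to work, but you would be proving a much harder statement than necessary; the paper's embeddedness trick is the key simplification, and it depends on the construction producing genuinely $3$-dimensional classifying spaces, a point your proposal does not address.

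\textbf{Upper bound.} Your sheet-decomposition picture is correct in outline: the paper does make the sphere transverse, cut along $e$-annuli (your ``sheets''), and fill the complementary pieces using the vertex-group Dehn function. But the step you describe as ``the exponential factor corresponds to the cost of transporting material across sheets'' hides the real work. Cutting along an $e$-annulus produces a disk whose boundary word lies in an edge group, and you must fill it \emph{in the edge space} to glue in a slab. The edge-group area of that word is not controlled by its length or by the ambient-group area in any obvious way. The paper's substantive contribution here is an \emph{area-distortion} lemma (Lemma~\ref{lem:area-distortion}): for the specific edge groups $\Gamma \cong F_2 \rtimes_\theta F_4$ in $H_n$, one shows $\mathrm{Area}_\Gamma(w) \le (\beta\,\mathrm{Area}_{H_n}(w)\, e^{\sqrt{\beta\,\mathrm{Area}_{H_n}(w)}})^2$. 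This is proved by a delicate corridor-counting argument in $H_n$ and is what makes the induction close with the correct $\exp^n(\sqrt{x})$ bound rather than something worse. Without an analogue of this lemma, your inductive upper bound does not go through.

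\textbf{Construction.} Your guess that the $\sqrt{x}$ versus $x$ distinction comes from a ``doubling or symmetrization'' is off. The paper alternates two genuinely different HNN procedures (coning over $(F_2)^k \times F_2$ to get $G_n$ from $H_n$; attaching suspended wings over $F_2 \rtimes_\theta F_4$ to get $H_n$ from $G_{n-1}$), corresponding to two geometrically distinct combination-subdivision moves on the schematic spheres. The $\sqrt{x}$ arises because one move replaces a region of area $\simeq m^2$ by one of area $\simeq m$, while the other replaces area $\simeq e^m$ by $\simeq m^2$; alternating them gives the two families. A crucial detail is that the automorphism $\varphi$ of $F_2$ is chosen to be \emph{palindromic}, which is needed for the van Kampen diagrams on the spheres to exist (Remark~\ref{rem:actions}).
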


Here $\exp^{n}$ denotes the $n$-fold composition of exponential 
functions. 

\begin{remark}\label{rem:nobors}
The combination-subdivision techniques of the current paper 
are one way of producing groups with super-exponential 
$2$-dimensional Dehn functions. There are other strategies 
that have the potential to produce groups 
of type ${\mathcal{F}}_{3}$ whose $2$-dimensional Dehn 
functions have super-exponential behavior. The following 
was suggested to the second author by Steve Pride. 

Take a group $A$ of type ${\mathcal{F}}_{3}$ with 
polynomially bounded ordinary Dehn function and containing a 
finitely presented subgroup $B$ whose 
ordinary  Dehn function is some tower of exponentials.  
Now let $G$ be the HNN extension with base group 
$A$, edge group $B$, and both edge maps are just the 
inclusion $B \to A$. Because of the finiteness 
properties of $A$ and $B$, we know that $G$ is 
of type ${\mathcal{F}}_{3}$. Finally take a word 
in $B$ that has very large area filling in $B$, and 
efficient filling in $A$. The 
product of this word with the HNN stable letter gives 
an annulus, whose ends  can be capped off using the efficient 
$A$-fillings. This gives a $2$-sphere in the HNN 
space with small area 
but large $3$-dimensional filling.  

So the whole problem reduces to finding groups 
with subgroups whose area is highly distorted. 
It is tempting to use the remarkable constructions 
of Birget-Ol$'$shanskii-Rips-Sapir~\cite{bors} to 
this end. However, it is likely that 
the ambient groups in~\cite{bors} are not of type 
${\mathcal{F}}_{3}$.  In particular, using the 
notation of~\cite{bors}, start with 
the Baumslag-Solitar group $BS(1,2)=\langle a,t\ |\ tat^{-1}=a^2\rangle$ as the 
group $G_b$, and then construct the group $H_N(S)$. The Cayley graph of $G_b$ is contained in that of $H_N(S)$. Consider the loop $[a,t^nat^{-n}]$, which is filled by a disk of the form shown in Figure 6 of~\cite{bors}. (This figure is viewed as a punctured sphere diagram, with the small loop labeled $u_b$ as the boundary, where the disk fillings track the behavior of the $S$-machine that reduces $[a,t^nat^{-n}]$ to the identity.) This disk filling has polynomial area. On the other hand, this loop is a product of exponentially many loops of the form $tat^{-1}a^{-2}$, each of which admits a disk filling of the form in Figure 6. These two methods of filling the word $[a,t^nat^{-n}]$ produce a $2$-sphere in the Cayley complex of $H_N(S)$, indicating the possibility that $H_N(S)$ is not of type $\finiteness_3$. Indeed, it is likely that the group $G_N(S)$  of \cite{bors} is not of type $\finiteness_3$ either. 
\end{remark}

This paper is organized as follows. In Section~\ref{sec:background} 
we give the definitions of $k$-dimensional Dehn functions, and 
remind the reader of a few standard techniques that are useful 
for establishing lower bounds. Section~\ref{sec:overview} gives 
a geometric overview of the groups, and the inductive 
construction of 
the ball-sphere pairs that are used to establish the lower bounds 
for the Dehn functions. Since the precise definitions of the groups 
are so involved, it is important that the reader keep the overview 
in mind when reading the later sections. In Section~\ref{sec:group-definitions} we give the precise definitions of the groups 
$H_{n}$ and $G_{n}$. These are defined recursively, and the details 
are recorded in Table~\ref{groups-table} for the reader's convenience.

The proof of Theorem~\ref{main-theorem} is broken into two parts. We establish the appropriate lower bounds in Section~\ref{sec:lower} 
by exhibiting sequences of embedded ball-sphere
pairs in the universal covers of $3$-dimensional $K(G,1)$-complexes. In Section~\ref{sec:upper} we obtain upper bounds on 2-dimensional Dehn functions of general graphs of groups in terms of information about their vertex and edge groups. These are then used to prove the desired upper bounds for $\delta_{H_n}^{(2)}$ and $\delta_{G_n}^{(2)}$. This proof depends on a key {\em area distortion} result, which is established in Section~\ref{sec:area-distortion}. 

We thank the referee for helpful comments and corrections.

\section{Preliminaries} \label{sec:background}

In this section we define $k$-dimensional Dehn functions, and 
describe two useful techniques for establishing lower bounds 
for Dehn functions. As in the case of ordinary (or $1$-dimensional) 
Dehn functions, one must work with coarse Lipschitz classes of 
functions in order to obtain a geometric invariant of a group. 

We begin with a discussion of  coarse Lipschitz equivalence. 
Given two functions $f,g\!:[0, \infty) \to [0, \infty)$ we define $f
\preceq g$ to mean that there exists a positive constant  $C$ such that 
\[ f(x) \leq C\,g(Cx) + Cx \]
for all $x\geq 0$. If $f \preceq g$ and $g \preceq f$ then $f$ and $g$
are said to be \emph{coarse Lipschitz equivalent} (or simply 
{\em equivalent}), denoted $f \simeq g$.

We work with the same definition  of high-dimensional 
Dehn  functions as in~\cite{bbfs}, which is equivalent to  
the notions in~\cite{brid} and  in~\cite{awp}.  
In order to give a formal definition of $\delta^{(k)}(x)$, one needs 
to work with a suitable class of maps that facilitates the counting 
of cells. 
In~\cite{bbfs} the class of {\em admissible maps} is used, and we 
use this class here. 

If $W$ is a compact $k$-dimensional manifold and $X$ a CW complex, an
\emph{admissible map} is a continuous map $f\!:W \to X^{(k)}\subset X$
such that $f^{-1}(X^{(k)} - X^{(k-1)})$ is a disjoint union of open
$k$-dimensional balls, each mapped by $f$ homeomorphically onto a
$k$-cell of $X$. 
The key fact about admissible maps is that every continuous map 
is homotopic to an admissible one.

\begin{lemma}[Lemma 2.3 of~\cite{bbfs}]\label{admissible} 
Let $W$ be a compact manifold (smooth or PL) of dimension $k$ and let $X$
be a CW complex. Then every continuous map $f\!:W \to X$ is homotopic 
to an admissible map. If $f(\partial W) \subset X^{(k-1)}$ then the homotopy
may be taken rel $\partial W$. 
\end{lemma}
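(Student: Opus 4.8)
The plan is to prove the statement by a standard simplicial-approximation argument, deforming an arbitrary continuous map onto the $k$-skeleton cell by cell while controlling the preimage of the top-dimensional open cells.

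First I would reduce to the case where $X$ is a simplicial complex and $W$ is a simplicial manifold: replace $X$ by a CW complex whose cells have been subdivided so that one may work with simplices, or more simply invoke the fact that any CW complex is homotopy equivalent to a simplicial complex and any map is homotopic to a simplicial one after subdivision of the domain. Since we only need the conclusion up to homotopy (rel $\partial W$ when $f(\partial W)\subset X^{(k-1)}$), such replacements are harmless. After subdividing, $f$ is homotopic to a simplicial map $g\!:W\to X$; but a simplicial map need not be admissible, because it may collapse $k$-simplices of $W$ onto lower-dimensional simplices of $X$, and it maps $k$-simplices linearly rather than sending a disjoint union of open balls homeomorphically onto $k$-cells. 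So the real work is a local modification of $g$ near the $k$-simplices that do map onto $k$-simplices of $X$.

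Next, for each $k$-simplex $\sigma$ of $X$ (now a $k$-cell), consider $g^{-1}(\mathring{\sigma})$, the preimage of its interior. For a simplicial $g$ this is a union of (relatively) open pieces of $k$-simplices of $W$ on which $g$ restricts to an affine homeomorphism onto $\mathring\sigma$, together possibly with lower-dimensional debris. One picks a small closed ball $B\subset\mathring\sigma$ (a metric ball in barycentric coordinates), and pushes the rest of $\sigma$ radially onto $\partial\sigma$: precisely, compose $g$ with a homotopy $h_t$ of $X$ supported in $\mathring\sigma$ that fixes $B$ pointwise and collapses $\sigma\setminus\mathring B$ onto $\partial\sigma$. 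After this homotopy, $g^{-1}(\mathring B)$ is a disjoint union of open $k$-balls in $W$, each mapped homeomorphically onto $\mathring B$, while everything else has been pushed into the $(k-1)$-skeleton. Finally, re-scale $B$ back up to all of $\sigma$ by a homeomorphism $\mathring B\to\mathring\sigma$ extending continuously over the boundary; doing this simultaneously and compatibly over all top cells of $X$ yields an admissible map homotopic to $g$, hence to $f$. When $f(\partial W)\subset X^{(k-1)}$ the $k$-simplices of $W$ meeting $\partial W$ contribute nothing over $\mathring\sigma$, so all the homotopies above can be taken rel $\partial W$.

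The main obstacle is the bookkeeping in the middle step: one must verify that after the radial collapse the set $g^{-1}(\mathring B)$ really is a disjoint union of \emph{open} balls each mapped homeomorphically — i.e.\ that no two sheets of $W$ over $\sigma$ get identified, that the pieces are genuinely $k$-dimensional balls and not more complicated, and that the lower-dimensional debris is cleanly swept into $X^{(k-1)}$ without creating new top-dimensional preimages elsewhere. This requires choosing the subdivision of $W$ fine enough and the collapsing homotopies carefully (e.g.\ so that images of distinct $k$-simplices of $W$ either agree on all of $\sigma$ or are disjoint over $\mathring B$), and checking that the finitely many local modifications, one per top cell of $X$, can be performed independently since they are supported in the disjoint open sets $\mathring\sigma$. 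Everything else — existence of simplicial approximations, the radial collapse of a simplex onto its boundary fixing an interior ball, and compatibility with the boundary condition — is routine.
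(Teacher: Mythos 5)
The paper does not prove this lemma; it is quoted verbatim from~\cite{bbfs}, so I can only assess your argument on its own merits and against what a correct proof must look like.

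There is a genuine gap at the very first step. Admissibility is \emph{not} a homotopy-invariant notion: it is defined relative to the fixed CW structure on $X$, since it speaks of ``$k$-cells of $X$'' and of the preimage of $X^{(k)}\setminus X^{(k-1)}$. Replacing $X$ by a homotopy-equivalent simplicial complex, or even just subdividing the cells of $X$, changes the $k$-skeleton and the set of open $k$-cells, and hence changes what ``admissible'' means. Your remark that ``since we only need the conclusion up to homotopy, such replacements are harmless'' is exactly where the argument breaks. Concretely: if a $k$-cell $\sigma$ of $X$ is subdivided into simplices $\sigma_1,\dots,\sigma_m$ together with new lower-dimensional cells in $\mathring\sigma$, then ``admissible for the subdivided complex'' controls the preimages of the $\mathring\sigma_i$ separately, while ``admissible for the original $X$'' requires the preimage of all of $\mathring\sigma$ --- including the new interior $(k-1)$-cells --- to be a disjoint union of open $k$-balls mapped homeomorphically onto $\mathring\sigma$. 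A map that is admissible for the subdivision typically fails this: the balls over the $\sigma_i$ need not glue into balls over $\sigma$, and the preimage of the interior $(k-1)$-cells is extra debris sitting inside $f^{-1}(\mathring\sigma)$.

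A secondary point: even granting the simplicial reduction, the rest of your argument does no further work. If $W$ is a $k$-dimensional simplicial complex and $g:W\to X$ is simplicial into a simplicial complex $X$, then $g$ is \emph{already} admissible for the simplicial structure on $X$: every $k$-simplex of $W$ either maps by an affine isomorphism onto a $k$-simplex of $X$ (hence a homeomorphism of open balls) or is collapsed to a lower-dimensional simplex (hence contributes nothing to $g^{-1}(X^{(k)}\setminus X^{(k-1)})$), and no lower-dimensional simplex of $W$ can hit $X^{(k)}\setminus X^{(k-1)}$. Your reason for thinking a simplicial map is not admissible --- that it may collapse $k$-simplices --- misreads the definition: collapsing is allowed, it just means those simplices are not in the relevant preimage. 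So the ``pick a small ball $B$, collapse, rescale'' step is vacuous in the simplicial setting; it is doing no repair work there.

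The place where that collapse-and-rescale idea \emph{is} the right move is in the original CW coordinates, not after subdividing $X$. The standard line of argument is: first homotope $f$ to a cellular map so that $f(W)\subset X^{(k)}$ (rel $\partial W$ if $f(\partial W)\subset X^{(k-1)}$, by the relative cellular approximation theorem). Then for each $k$-cell $\sigma$, use the characteristic map $\Phi_\sigma:D^k\to X$ as a chart on $\mathring\sigma$; in these coordinates $f$ restricted to $f^{-1}(\mathring\sigma)$ is a map from an open subset of the smooth or PL manifold $W$ to $\mathring D^k\subset\mathbb{R}^k$, and one can apply Sard's theorem (or its PL analogue) to find a regular value $p$ near the center, so that $f^{-1}(p)$ is finite and $f$ is a local homeomorphism onto a small ball $B$ around $p$ near each preimage point. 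Now collapse $\sigma\setminus\mathring B$ onto $\partial\sigma$ and rescale $B$ to $\sigma$, all \emph{inside the original cell $\sigma$} via $\Phi_\sigma$. This produces an admissible map for the given CW structure, which is what the lemma asserts. Your proposal has the right local picture (push everything but a ball into the boundary, then rescale), but it must be executed in the original cell coordinates using transversality, not after replacing $X$ by a simplicial model.
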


Given an admissible  map $f\!:W \to X$ from a compact $k$-manifold 
to a CW complex $X$,  the $k$-\emph{volume} of $f$,
denoted $\text{Vol}_{k}(f)$, is defined to be the number of open 
$k$-balls in $W$ mapping to
$k$-cells of $X$. 
We now have all the ingredients for a formal 
definition of the $k$-dimensional Dehn function. 

\medskip

Given a group $G$ of type $\mathcal{F}_{k+1}$, fix an aspherical CW
complex $X$ with 
fundamental group $G$ and finite $(k+1)$-skeleton. Let $\widetilde{X}$ be
the universal cover of $X$. If $f\!:S^k \to \widetilde{X}$ is
an admissible map, define the \emph{filling volume} of $f$ to be the
minimal volume of an admissible extension of $f$ to $B^{k+1}$: 
\[ \, \text{FVol}(f) \ = \ \min \{\, \text{Vol}_{k+1}(g) \ | \  g\!:B^{k+1} \to
\widetilde{X}, \  g\vert_{\partial B^{k+1}} = f \, \}.\]  
 Note that 
extensions of $f$ exist since $\widetilde{X}$ is contractible, and 
that we may assume these extensions are admissible by Lemma \ref{admissible}.  

Now, the \emph{$k$-dimensional Dehn function} of
$X$ is defined to be 
\[\delta^{(k)}(x)\;=\;\sup \{ \, \text{FVol}(f) \ | \  f\!:S^k \to
\widetilde{X}, \ {\hbox{$f$ is admissible,}} \ \text{Vol}_k(f) \leq x \, \}.\]

This definition is the one given in~\cite{bbfs}, which is equivalent to the 
definitions given in~\cite{awp, brid}.    
It is shown 
in~\cite{awp} that, up to equivalence, $\delta^{(k)}(x)$ is 
a quasi-isometry invariant of $\widetilde{X}$.   In particular, the equivalence 
class of $\delta^{(k)}(x)$ does not depend 
on the particular $K(G,1)$ complex used. 
Therefore, we will often  write
$\delta^{(k)}(x)$ as $\delta^{(k)}_G(x)$.

\begin{remark}
We are concerned in this paper primarily with $2$- and $3$-dimensional volume, which for clarity we write as $\text{Vol}_2(f)=\text{Area}(f)$ and $\text{Vol}_3(f)=\text{Vol}(f)$. We also abuse notation in the standard way by writing, for example, $\text{Vol}(Y)$ to mean $\text{Vol}(f)$ for some understood $f\!:Y\to X$.
\end{remark}

\medskip

We recall two very useful techniques from \cite{bbfs} 
for  computing lower bounds for $k$-dimensional Dehn 
functions. 
 
\begin{remark}\label{rem:embedded}
The first technique follows from the fact (see Remark 2.7 of 
\cite{bbfs}) that  the volumes of top-dimensional embedded balls in 
a contractible complex are precisely the filling volumes of 
their boundary spheres. More precisely, 
if $\widetilde{X}$ is a contractible $(k+1)$-complex, and $g\!:B^{k+1} \to \widetilde{X}$ is an embedding, such that  $g$ and $g|_{S^{k}} = f$ are both 
admissible, then 
$\text{FVol}(f) = \text{Vol}_{k+1}(g)$. The proof is a standard 
homological argument; see~\cite{bbfs} for details. 
\end{remark}

\begin{remark}\label{rem:sparse}
See Remark 2.1 of~\cite{bbfs}. 
In order to establish the relation $f \preceq g$ between non-decreasing
functions, it suffices to consider relatively sparse sequences of
integers. For if $(n_i)$ is an unbounded sequence of integers for which
there is a constant $C>0$ such that $n_0 = 1$ and $n_{i+1} \leq Cn_i$ for
all $i$, and if $f(n_i) \leq g(n_i)$ for all $i$, then $f \preceq
g$. Indeed, given $x \in [0, \infty)$ there is an index $i$ such that
$n_i \leq x \leq n_{i+1}$, whence $f(x) \leq f(n_{i+1}) \leq g(n_{i+1})
\leq g(C n_i) \leq g(Cx)$. 
 \end{remark}

In the case of the groups $H_{j}$ in this paper, we produce 
a sequence of embedded balls $\{B^{3}_{r}\}_{r=1}^{\infty}$ and spheres $\{S^{2}_{r}\}_{r=1}^{\infty}$ in the universal cover of a 
$3$-dimensional,  aspherical $K(H_{j}, 1)$ complex. By Remark~\ref{rem:embedded} 
$\text{Vol}(B^{3}_{r})$ provides a lower bound 
for $\delta^{(2)}$ evaluated at $\text{Area}(S^{2}_{r})$. 
In Lemma~\ref{lem:Hvol} we prove that there exist 
constants $A_{1}, A_{2}$ so that 
$$
A_{1} r^{2} \ \leq \ \text{Area}(S^{2}_{r}) \ \leq \ A_{2} r^{2}\, .
$$
The sequence of numbers $\{\text{Area}(S^{2}_{r})\}_{r=1}^{\infty}$ 
satisfies the conditions of Remark~\ref{rem:sparse}, and so 
one obtains a lower bound for the function $\delta^{(2)}_{H_{j}}(x)$.  

The situation is similar for the groups $G_{j}$, although the inequalities are a bit more subtle. We show directly in Lemma~\ref{lem:Gvol} that the sequence $\{\text{Area}(S_r^2)\}_{r=1}^{\infty}$ satisfies the conditions of Remark~\ref{rem:sparse}, and so lower bounds for $\delta_{G_j}^{(2)}(x)$ evaulated at $\text{Area}(S_r^2)$ (given by the 3--volume of the corresponding embedded 3--balls), translate into a general lower bound for the function $\delta_{G_j}^{(2)}(x)$.

\section{Overview and geometric intuition}
\label{sec:overview}

In Section~\ref{sec:group-definitions} we define a sequence of groups 
$G_n$ and $H_n$ (for $n \geq 1$) with $\delta^{(2)}_{G_n}(x) \simeq  
\exp^n(x)$ and $\delta^{(2)}_{H_n}(x) \simeq  
\exp^{n}(\sqrt{x})$.  In Section~\ref{sec:lower} lower bounds for these Dehn functions are established by exhibiting sequences of 
embedded $3$-balls and boundary $2$-spheres in the universal 
covers of $3$-dimensional $K(G_n,1)$ and $K(H_n,1)$ complexes.  Some 
of the details in these two sections may appear a little daunting on first reading. 
In order to motivate these groups and the sequences of $3$-balls, we first 
investigate a classical construction for producing groups whose 
$1$-dimensional Dehn functions are iterated exponentials. 

\subsection{A $1$-dimensional Dehn function example and schematic diagrams}
Consider how one may construct 
finitely presented groups whose $1$-dimensional Dehn functions are 
iterated exponential functions. The following examples 
are part of the folklore, but only appear to have been written down 
relatively recently \cite{bridson}. Consider the family of finitely presented groups $BS_n$ (for each positive integer $n$), defined 
by taking $BS_1$ to be the Baumslag-Solitar group 
$$
BS_1 \; = \; \langle a, t_1 \, | \, t_1at_1^{-1} = a^2 \rangle
$$
and defining the $BS_n$ ($n \geq 2$) as iterated HNN extensions
$$
BS_{n+1} \; = \; \langle BS_n, t_{n+1} \, | \, t_{n+1}t_nt_{n+1}^{-1} = t_n^2 \rangle \, .
$$
The group $BS_n$ has Dehn function equivalent to the iterated exponential 
function $\exp^n(x)$.  

The intuition is as follows.  The commutator word 
$$
w_{1,m_1} \; = \; [t_1^{m_1} a t_1^{-m_1}, a]
$$ 
has linear 
length in $m_1$ and has area $\exp(m_1)$ in $BS_1$.

Now $BS_1 < BS_2$, and the extra relation in the presentation 
of $BS_2$ ensures that one can replace the segments 
$t_1^{m_1}$ in the commutator word above by words of the form $t_2^{m_2} t_1t_2^{-m_2}$ 
of length approximately $\log_2(m_1)$. Thus, the word 
$$
w_{2,m_2} \; = \; [(t_2^{m_2} t_1t_2^{-m_2})a(t_2^{m_2} t_1t_2^{-m_2})^{-1}, a]
$$
has length a linear function of $m_2$, but area $\exp^2(m_2)$ in $BS_2$. 

Continuing in this fashion, one  can keep replacing subwords  of the form 
$t_j^{m_j}$ by $t_{j+1}^{m_{j+1}}t_jt_{j+1}^{-m_{j+1}}$ where $m_{j+1}$ 
is approximately $\log(m_j)$. One obtains a word 
$w_{n,m_n}$ representing the identity in $BS_n$ whose length is  
a linear function of $m_n$ and whose area is $\exp^n(m_n)$. The 
lower bound on the area of the word $w_{n,m_n}$ is obtained by 
realizing $w_{n,m_n}$ as the boundary of an embedded van Kampen 
disk in the universal cover of the standard 
presentation 2-complex for $BS_n$. One verifies that the area of the 
embedded disk is $\exp^n(m_n)$. 
Since this universal cover is 
contractible, the standard homological argument from 
Remark~\ref{rem:embedded} enables one to 
conclude that any van Kampen filling disk for $w_{n,m_n}$ has at 
least as much area as the embedded one.

\begin{figure}[h]
\begin{center}
\psfrag{w1}{{\small $w_{1,m_1}$}}
\psfrag{w2}{{\small $w_{2,m_2}$}}
\psfrag{w3}{{\small $w_{3,m_3}$}}
\includegraphics[width=4.6in]{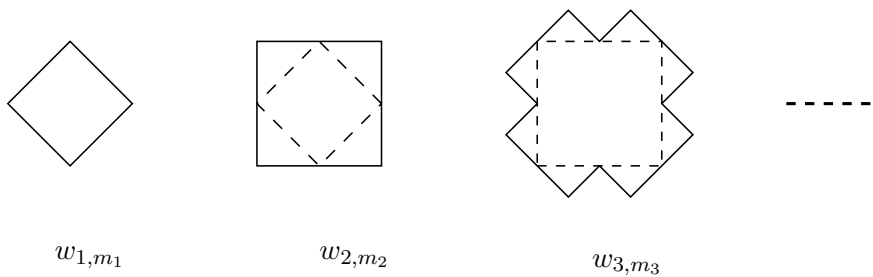}
\end{center}
\caption{Schematic van Kampen disks for  the words $w_{i,m_i}$. \label{fig:vk}}
\end{figure}

Figure~\ref{fig:vk} contains schematic diagrams of these embedded disks. 
In these schematics, we ignore all syllables  of length 1, so the original commutator 
$w_{1,m_1}$ appears as a square. Each replacement rule  
$$
t_j^{m_j} \; \; \mapsto \; \; t_{j+1}^{m_{j+1}}t_jt_{j+1}^{-m_{j+1}}
$$
is represented by a triangle with base attached along an edge 
of the previous schematic diagram.   The original square 
schematic for $w_{1,m_1}$ becomes the octagon schematic 
for $w_{2,m_2}$, and this becomes the $16$-gon schematic for   
$w_{3,m_3}$, and so on.

We reformulate the geometric intuition and combinatorics 
of these examples in terms of schematic diagrams. 
Start with a quadrilateral whose area is an exponential 
function of its  perimeter length. Now take logs of the perimeter length by attaching 
triangles via their base edges. Metrically, we think of the two 
remaining edges of each triangle as having length that is approximately a 
$\log$ of the length of the base edge. We attach a generation of 4 triangles, 
then a second generation of 8 triangles, a third generation of 16 triangles 
and so on. Repeat this procedure as often as 
necessary to obtain a diagram whose area is an $n$-fold composite of 
exponential functions of its perimeter length.

\begin{figure}[h]
\begin{center}
\psfrag{l}{{\scriptsize $\log(m)$}}
\psfrag{n}{{\small $m$}}
\includegraphics[width=120mm]{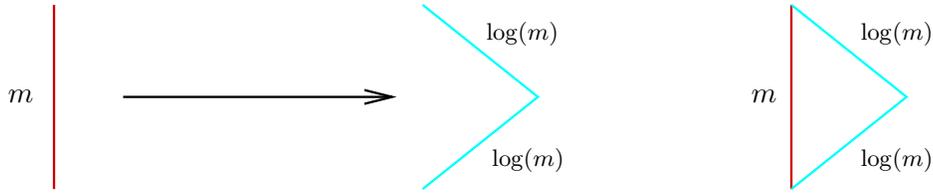}
\end{center}
\caption{The subdivision rule and associated 
triangular disk. \label{fig:comb1}}
\end{figure}

The combinatorics 
of the boundary circles of the schematic diagrams changes in a 
very simple fashion.  Edges get subdivided into two at each stage.  
Topologically this subdivision is achieved by attaching triangular 
$2$-disks along their base edges.  This is shown in Figure~\ref{fig:comb1}.

\begin{remark}
Note that for each $i$ the schematic diagram for $w_{i,m_i}$ is a 
\emph{template} for an infinite family of embedded van Kampen diagrams 
in the universal cover of the presentation $2$-complex for 
$BS_i$. The infinite family is obtained by letting the integer $m_i$ range from 
1 to infinity. 
\end{remark}

\subsection{The 2-dimensional Dehn function schematic procedure}
What  complications arise when one tries to mimic the procedure of 
the previous subsection in higher dimensions? We will focus 
on the case of $2$-dimensional Dehn functions. 
Start with a schematic $3$-dimensional ball and then try to reduce the areas of portions 
of its boundary $2$-sphere by attaching new $3$-balls along its boundary.    Each new 
3-ball attaches along a 2-disk. The  boundary of such a disc is a  circle that survives 
on the boundary of the new $3$-ball.  A \emph{collar neighborhood} of such a circle 
contributes to the area of the new boundary $2$-sphere.  

In order to be able to reduce area 
repeatedly by attaching successive generations of 
$3$-balls, we will need to ensure that future generations of 
$3$-balls attach in such a way as to cover up (large portions of) the boundary 
circles of the attaching disks of a given generation of $3$-balls.  This is a very basic 
complication in the combinatorics of attaching $3$-balls that does not occur 
in the case of 1-dimensional Dehn functions and the examples 
of the previous subsection. In the 
$1$-dimensional case 
the new $2$-disks attach along $1$-disks, the boundary of 
a $1$-disk is a $0$-sphere, and a collar neighborhood of a $0$-sphere 
makes no significant contributions to length of the new boundary circle.

We first describe the combinatorics of  our approach for dealing with the difficulty outlined in the preceding paragraph. We construct a sequence of schematic 
$3$-balls $B_i$ whose boundary $2$-spheres $S_i$ have the following properties. 
\begin{enumerate}
\item Each $S_i$ has a tiling into triangular regions. 
\item The  
 boundary edges of these regions have two colors, labeled  $(i-1)$ and $i$. 
 \item Each tile contains exactly one $(i-1)$-edge.  
 \end{enumerate}
 Now $S_{i+1}$ is obtained from $S_i$ by a \emph{combination-subdivision}
move as shown in Figure~\ref{fig:comb}.  Unlike the 
1-dimensional case, where a simple subdivision procedure was sufficient, 
in this case one combines  two  cells together to form a quadrilateral region, 
and then one subdivides that region. 

Each edge of color $(i-1)$ is the diagonal 
of a quadrilateral (combine two adjacent triangular regions).  
Delete this  diagonal edge and 
retriangulate this quadrilateral by adding a barycenter and coning 
to the four vertices.  Each new edge is of color $(i+1)$.  
Note that this gives a triangulation of the sphere $S_{i+1}$ with just two colors, 
$i$ and $(i+1)$, and now each triangle 
contains exactly one edge of color $i$.  Thus one can repeat this procedure 
inductively. 

\begin{figure}[h]
\begin{center}
\psfrag{i}{{\scriptsize $i$}}
\psfrag{m}{{\scriptsize $i-1$}}
\psfrag{p}{{\scriptsize $i+1$}}
\includegraphics[width=4in]{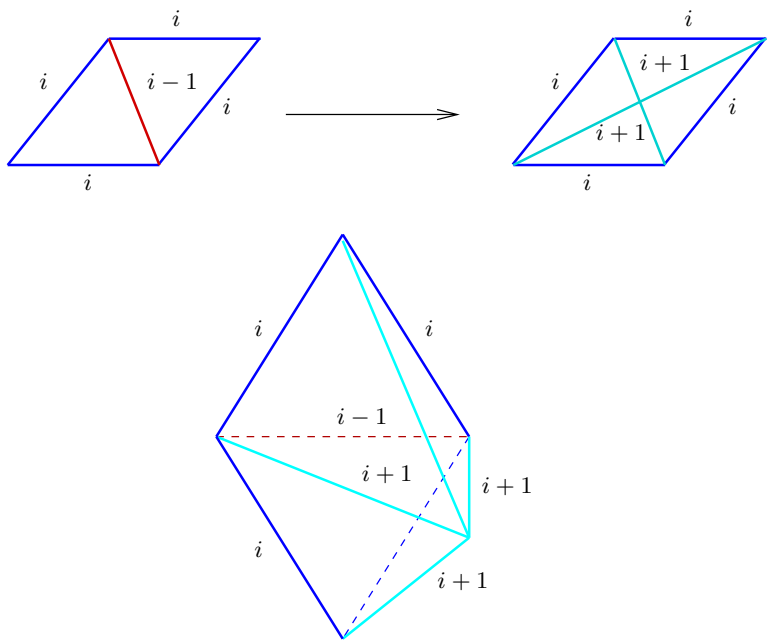}
\end{center}
\caption{The combination-subdivision rule and associated 
$3$-ball. \label{fig:comb}}
\end{figure}

We think of each sphere $S_i$ as being the boundary of a $3$-ball $B_i$. 
We think of each old quadrilateral as the back half of a $2$-sphere 
and each new (subdivided) quadrilateral as the front half of the  same 
$2$-sphere. This $2$-sphere bounds a $3$-ball, as indicated in the lower 
half of Figure~\ref{fig:comb}. The $3$-ball $B_{i+1}$ is 
obtained from $B_i$ by attaching a collection of such $3$-balls along the 
quadrilateral neighborhoods of the edges with color $(i-1)$.

\begin{figure}[h]
\begin{center}
\psfrag{i}{{\small I}}
\psfrag{j}{{\small II}}
\psfrag{n}{{\scriptsize $m$}}
\psfrag{l}{{\scriptsize $\log(m)$}}
\psfrag{e}{{\scriptsize $e^m$}}
\includegraphics[width=4.4in]{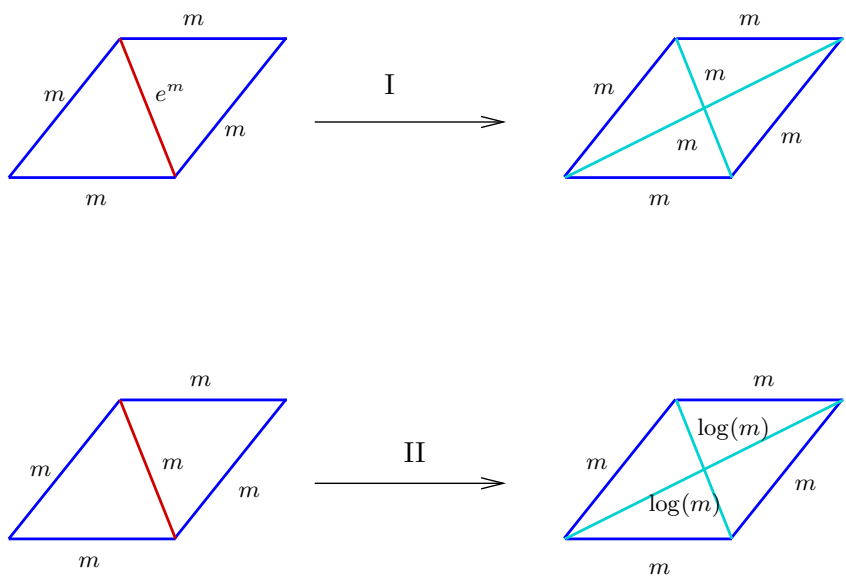}
\end{center}
\caption{Geometric versions of the combination-subdivision rule. \label{fig:metricsub}}
\end{figure}

As in the case of the $1$-dimensional Dehn function examples, the combination-subdivision 
moves have  geometric  content. Now we are concerned with areas as well as 
lengths. There are two geometric versions of the combination-subdivision rule. These are 
indicated in Figure~\ref{fig:metricsub}, and details are given in  Section~\ref{sec:lower}.  
\begin{enumerate}
\item In a Type I rule, edges of color $i$ 
have length  equal to $m$, edges of color $(i-1)$ have length $e^m$, 
and the new  edges of color $(i+1)$ have length 
$m$. Each of the two old triangles have area $e^m$ and the new triangles 
each have area $m^2$. 
\item In a Type II rule, edges of color $i$ and $(i-1)$ 
have length  equal to $m$, and the new edges of color $(i+1)$ have length 
$\log(m)$. Each of the two old triangles have area $m^2$ and the new triangles 
each have area of order $m$. 
 \end{enumerate}

\begin{figure}[h]
\begin{center}
\psfrag{i}{{\small I}}
\psfrag{j}{{\small II}}
\psfrag{1}{{\small $S_1$}}
\psfrag{2}{{\small $S_2$}}
\psfrag{3}{{\small $S_3$}}
\psfrag{4}{{\small $S_4$}}
\includegraphics[width=4in]{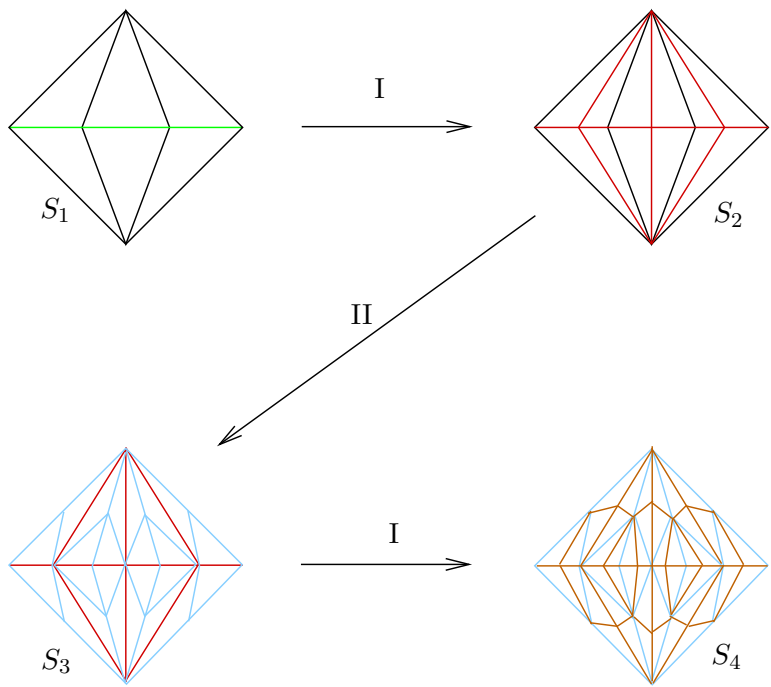}
\end{center}
\caption{Alternating the combination-subdivision rules. \label{fig:iterate}}
\end{figure}

These two types are designed so that one can alternate families of each type.  
In Section~\ref{sec:lower} we construct  families of embedded balls and boundary 
spheres with the following schematic diagrams. 
The base $2$-sphere $S_1$  is the join $S^0 \ast S^0 \ast S^0$, 
with the equator $S^0 \ast S^0$ being comprised of 4 edges of color $0$, and the 
remaining 8 edges having color $1$. We alternate these  versions of the geometric combination-subdivision rule as we progress 
along the sequence of schematic $2$-spheres. 
Figure~\ref{fig:iterate} shows three fourths of the 
sphere $S_1$, and the effect of the first few combination-subdivision rules. 

The geometry of these families is as follows. The sphere $S_1$ bounds a  $3$-ball  
whose volume is a quadratic function of the area of $S_1$. Applying  a generation 
of Type I moves, we see that the sphere $S_2$ bounds a 
$3$-ball whose volume is the square of $\exp(\sqrt{{\rm Area}(S_2)})$, which is 
equivalent to $\exp(\sqrt{{\rm Area}(S_2)})$. The sphere 
$S_3$ is obtained by applying Type II moves, and so 
bounds a $3$-ball whose volume is $\exp({\rm Area}(S_3))$, and so on.

\begin{remark}
These schematic diagrams are related to the groups $G_i$ and $H_i$ ($i \geq 0$) of 
Section~\ref{sec:group-definitions} as follows. 
For each integer $i\geq 0$, the sphere  $S_{2i+1}$ is a template for $2$-spheres in the universal 
cover of a $3$-dimensional $K(G_i,1)$-complex, 
and the sphere $S_{2i+2}$ is a template for $2$-spheres in the universal cover of a 
$3$-dimensional $K(H_i,1)$-complex. 
\end{remark}

\begin{remark}
In the case of the 1-dimensional Dehn function the schematic diagrams ignored 
syllables of length one. So a vertex may correspond to an edge of length 1 in a 
corresponding van Kampen diagram. 

Similarly, vertices in these schematic sphere diagrams $S_i$ may correspond 
to (several) 2-cells of the corresponding embedded spheres, and an edge 
may correspond to corridors on the corresponding spheres. 
\end{remark}

\begin{remark}\label{remark:F2}
By analogy with the $1$-dimensional case, one might be tempted in constructing the groups $G_i$ and $H_i$ to use Baumslag-Solitar type relations to provide the exponential scaling required by the Type I moves. For example, thinking of each of the triangles in the base quadrilateral of a Type I move as similar to the triangle in Figure~\ref{fig:comb1}, we could take the base quadrilateral to have boundary word $t^nat^{-n}s^na^{-1}s^{-n}$ in the group
$$
B\;=\langle a,t,s \,|\, a^s=a^t=a^2\rangle.
$$
This quadrilateral is composed of two triangles (each with area $\exp (n)$) with boundary words $t^nat^{-n}=a^{2^n}$ and $s^nas^{-n}=a^{2^n}$.
 
The Type I move is executed by adding a stable letter $u$ that acts on $B$ via the endomorphism $\varphi$ given by: $s\mapsto s, t\mapsto t, a\mapsto a^2$. The word $t^nat^{-n}s^na^{-1}s^{-n}$ is the boundary of a new quadrilateral, composed of four triangles (each of area $n^2$), and the new-color edges are $u^n$, $(s^{-1}u)^n$, $u^n$, and $(t^{-1}u)^n$. (See Figure~\ref{fig:comb}.)

Observe, however, that the group
$$
P\;=\langle B,u\, |\, g^u =\varphi(g), \;  (g \in B) \rangle
$$
is not an HNN extension with base $B$ since the map $\varphi$ is not injective. 
For instance $t^{-1}a t s^{-1}a^{-1}s \in \mathrm{Ker}(\varphi)$.

It is important that each of groups in the present paper 
admits a graph-of-groups decomposition. This facet of 
our construction is exploited heavily in Section~\ref{sec:lower} where the graph of groups 
structure is used to conclude the existence of $3$--dimensional classifying spaces, and these 
$3$--dimensional spaces are used to establish lower bounds for the $2$--dimensional 
Dehn functions. 

In Section~\ref{sec:group-definitions} below the graph-of-groups structure is guaranteed by using mapping tori of free groups of rank 2 in place of Baumslag-Solitar groups.   \end{remark}

\section{The groups}\label{sec:group-definitions}

Let $F_2$ denote the free group on two generators. 
To construct the two families of groups $G_n$ and $H_n$, we start with the base group $F_2 \times F_2 \times F_2$ and then alternate two procedures, coning and attaching suspended wings, which are described below. The base group is of type 
$\mathcal{F}_3$, and we shall see from the definitions of the two procedures that the groups obtained at each stage are also of type $\mathcal F_3$.

As indicated in Remark~\ref{remark:F2} we will need to use an exponentially growing automorphism $\varphi$ of $F_2$ repeatedly in the construction.  
It will be important (in Section~\ref{sec:moves}) that $\varphi$ be palindromic.
\begin{definition}[The palindromic automorphism $\varphi$] \label{def:phi} 
Setting $F_2=\langle \first, \second \rangle$, we define the automorphism $\varphi: F_2 \ra F_2$ by
\[
\varphi( \first)=\first \second \first \text{ and } \varphi(\second)=\first.
\]  
\end{definition}

\begin{remark}[Vector notation for free group bases] \label{rem:vector}
We use the vector notation ${\bf y}$ to denote the 
basis $\{y_1, y_2\}$ for a free group of rank two. In Table~\ref{groups-table}, the vectors themselves may have subscripts.  For example, ${\bf u}_1$ denotes the basis $\{u_{11}, u_{12}\}$ and $\stable 23$ denotes the basis $\{ a_{231}, a_{232}\}$. Furthermore, an ordered list of $k$ vectors describes an ordered 
basis of $F_{2k}$. For example, in Table~\ref{groups-table}, $\langle 
{\bf u}_0, {\bf y}\rangle$ denotes $F_4$ with the ordered basis 
$\{ u_{01}, u_{02}, y_1, y_2\}$. 

We also use product notation to denote coordinatewise multiplication of basis elements. For example,
${\bf u}_0^{-1}\stable 21$ denotes the basis $\{ u_{01}^{-1} a_{211}, u_{02}^{-1}a_{212}\}$.  

We say that two vectors ${\bf v}$ and ${\bf w}$ commute if the two basis elements represented by ${\bf v}$ commute with the two represented by ${\bf w}$.
\end{remark}

The following definition provides the algebraic framework for the Type II moves from Section~\ref{sec:overview}.  We shall see explicit examples of these moves 
in Section~\ref{somespheres}.

\begin{definition}[Coning] \label{def:coning} 
Suppose $G$ is a group with a subgroup $\G$ isomorphic to 
$(F_2*F_2*\cdots*F_2)\times F_2$. 
We define the \emph{cone of $G$ over $\G$}
to be the fundamental group of the graph of groups in Figure~\ref{fig:coneg}.

\begin{figure}[h]
\psfrag{G}[][][0.8]{$G$}
\psfrag{E}[][][0.8]{$\Gamma$}
\includegraphics[scale=0.6]{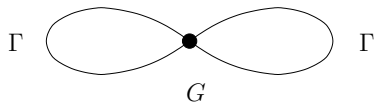}
\caption{The cone of $G$ over $\G$.\label{fig:coneg}}
\end{figure}
For each edge group, one of the  
edge maps is inclusion and the other edge map is 
$(\varphi*\varphi*\cdots*\varphi)\times\varphi$. This map is clearly injective, as $\varphi$ is injective.  Thus the cone of $G$ over $\G$ is simply a double HNN extension of $G$.  If $G$ is of type $\mathcal{F}_3$, then the cone of $G$ over $\G$ is also of 
type $\mathcal{F}_3$.
\end{definition}

\begin{example} \label{ex:G0}
In the first step of the inductive procedure, we start with
$G=F_2 \times F_2 \times F_2$, where the three $F_2$ factors are 
generated by the $2$-vectors $\stable 01$, $\stable 02$ and ${\bf y}$.  The subgroup $\G$ is
$F_2\times F_2= \langle \stable 01\rangle \times \langle \stable 02\rangle$.   Thus the presentation for 
$G_0=$ the cone of $G$ over $\G$ is given by
\[
G_0 =\langle G, {\bf u}_0 \mid (g,h)^{u_{01}}=(g,h)^{u_{02}}= (\varphi(g), \varphi(h)), \forall (g,h) \in \Gamma \rangle
\]
\end{example}

The next definition provides the algebraic framework for the Type I moves from Section~\ref{sec:overview}.  We shall see explicit examples of these moves 
in Section~\ref{somespheres}.

\begin{definition}[Attaching suspended wings]\label{def:suspend}
Let $G$ be a group containing a collection of subgroups $\{ \G_1,\ldots,\G_n \}$, each isomorphic to $F_2\rtimes_{\theta} F_4$, where the map $\theta\!:F_4 \to \mathrm{Aut}(F_2)$ is defined on the ordered basis $\{x_1, x_2, x_3, x_4\}$ as 
follows:
$$x_1 \mapsto \varphi, \quad
x_2 \mapsto \varphi, \quad
x_3 \mapsto \mathrm{id}, \quad x_4 \mapsto \mathrm{id}\,.
$$
Consider 
 the fundamental group of the graph of groups in Figure~\ref{fig:suspend}.
\begin{figure}[h]
\psfrag{G}[][][0.8]{$G$}
\psfrag{E1}[][][0.8]{$\Gamma_1$}
\psfrag{E2}[][][0.8]{$\Gamma_2$}
\psfrag{En}[][][0.8]{$\Gamma_n$}
\includegraphics[scale=0.8]{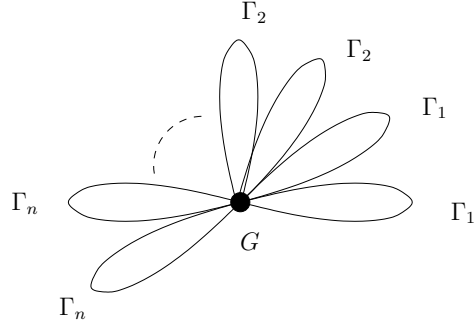}
\caption{Attaching suspended wings to the subgroups $\G_i$.\label{fig:suspend}}
\end{figure}
For each edge group $F_2 \rtimes_{\theta} F_4$, the two edge maps are the inclusion map and the map $\varphi \times \mathrm{id}$, which acts by $\varphi$ on the $F_2$ factor and the identity on the $F_4$ factor.  It is easy to check that the latter map is an injective homomorphism.
We say that this group 
is obtained from $G$ by \emph{attaching suspended wings} to the collection of subgroups $\G_i$.

Note that the edge groups $\G_i$ are all of type $\mathcal{F}_2$.  Thus, if 
 $G$ is of type $\mathcal{F}_3$, then the new group is also 
 of type $\mathcal{F}_3$.  
\end{definition}

\begin{example}\label{ex:susp-wings1} 
In the group $G_0$ above, the subgroup $\G_1=\langle \stable 01,{\bf u}_0, {\bf y}\rangle$ is isomorphic to the group $F_2 \rtimes_{\theta} F_4$ from 
Definition~\ref{def:suspend}.
Then the group 
obtained from $G_0$ by attaching suspended wings to $\G_1$ 
has the presentation
\[
\left\langle \, G_0, \stable 11 \;\bigg|\; \begin{array}{ll}g^{a_{111}}=g^{a_{112}}=\varphi(g) & \forall g \in F(\stable 01),\\ g^{a_{111}}=g^{a_{112}}=g & \forall g \in F({\bf u}_0) \ast F({\bf y})\end{array}\,
\right\rangle
\]
\end{example}

\begin{example}\label{ex:H1}
The group $H_1$ in the inductive construction is obtained from $G_0$ by attaching suspended wings to 
$\G_1$ and $\G_2$, where 
$\G_1= F_2 \rtimes_{\theta} F_4= \langle \stable 01 \rangle \rtimes_{\theta} \langle {\bf u}_0 , {\bf y} \rangle$ 
as in Example~\ref{ex:susp-wings1}, and 
$\G_2=F_2 \rtimes_{\theta} F_4= \langle \stable 02 \rangle \rtimes_{\theta} \langle {\bf u}_0 , {\bf y} \rangle$. The pair of stable letters in $H_1$ corresponding to the edge group $\G_1$ is denoted by $\stable 11$ and the pair of stable letters corresponding to the edge group $\G_2$ is denoted by $\stable 12$. 
This is the summarized in the $H_1$-row of Table~\ref{groups-table}.
\end{example}

\subsection{Inductive definition of the groups $G_n$ and $H_n$}\label{sec:inductive-group-defs}
In Example~\ref{ex:G0}, the group $G_0$ was defined 
by applying the coning procedure to the base group $G= F_2 \times F_2 \times F_2$. In Example~\ref{ex:H1}
the group $H_1$ was
obtained from $G_0$ by attaching suspended wings.  The groups $G_n$, $n\geq 1$ and $H_n$, $n \geq 2$ are defined inductively
by alternating these two procedures. 

The group $G_{n}$ is obtained from $H_{n}$ by coning over a subgroup of $H_{n}$
that is isomorphic to $F_{2^{n+1}} \times F_2$, where $F_{2^{n+1}}$ is a free product of $2^n$ copies of $F_2$.  The pair of new stable letters is denoted by ${\bf u}_n$.   The $F_2$ factor of the subgroup
$F_{2^{n+1}} \times F_2$ is generated by ${\bf u}_{n-1}$.  The generators of the 
$F_{2^{n+1}}$ factor are listed in the $G_n$-row of Table~\ref{groups-table}.

The group $H_n$ is obtained from
$G_{n-1}$ by attaching suspended wings to a collection of $2^{n}$
subgroups of $G_{n-1}$, each of which is isomorphic to $F_2\rtimes_{\theta} F_4$ from Definition~\ref{def:suspend}.  
Note that each of these subgroups labels two edges of the rose in Figure~\ref{fig:suspend}, so
that there are actually $2^{n+1}$ new stable letters in $H_n$. 
These are labeled by $2^n$ vectors
in the $H_n$-row of Table~\ref{groups-table}.  

Lemma~\ref{lem:edge-str} establishes that the edge groups listed in Column $3$ of Table~\ref{groups-table} are indeed isomorphic to 
$F_{2^i} \times F_2$ or $F_2 \rtimes_{\theta} F_4$, depending on the case. 
By induction, the groups $G_n$ and $H_n$ are of type $\mathcal{F}_3$ for all $n$. 
Table~\ref{groups-table} also includes the $2$-dimensional Dehn functions of the resulting groups.

{\scriptsize

\begin{table}
\caption{\label{groups-table}}
\begin{center}
\begin{tabular}{|c|c|c|c|}
\hline
\tspace
Group & Stable letters & Edge groups & $\dehntwo{}{s}$ \\
\hline

\tspace
$H_0$ & $\stable 01, \stable 02,  \mathbf y$ & & $x^{3/2}$\\
\hline

\tspace
$G_0$ & $\mathbf u_0$ & $\langle \stable 01 \rangle \times \langle \stable 02 \rangle$ & $x^2$\\
\hline

\tspace
$H_1$ 
& 
$
\begin{array}{c}
\tspace
\stable 11 \\
\stable 12
\end{array}
$ 
& 
$
\begin{array}{c}
\tspace
\langle \stable 01 \rangle \rtimes_{\theta} 
\langle \mathbf u_0, 
\mathbf y \rangle \\ 
\tspace
\langle \stable 02 \rangle \rtimes_{\theta} 
\langle \mathbf u_0, 
\mathbf y\rangle 
\end{array}
$
& 
$e^{\sqrt x}$\\
\hline

\tspace
$G_1$ & $\mathbf u_1$ & 
$\langle \stable 11, \stable 12 \rangle \times \langle \mathbf u_0\rangle$ & $e^x$\\
\hline

\tspace
$H_2$ & 
$\begin{array}{c} 
\tspace
\stable 21 \\
\tspace
\stable 22 \\
\tspace
\stable 23 \\
\tspace
\stable 24
\end{array}$
& 
$\begin{array}{c} 
\tspace
\langle \stable 11 \rangle \rtimes_{\theta} \langle \mathbf u_1, \mathbf y\rangle \\
\tspace
\langle \stable 12 \rangle \rtimes_{\theta} \langle \mathbf u_1,\mathbf y\rangle \\
\tspace
\langle \mathbf u_0^{-1} {\stable 11} \rangle \rtimes_{\theta} \langle \mathbf u_1, \stable 01\rangle\\ 
\tspace
\langle \mathbf u_0^{-1}{\stable 12}  \rangle \rtimes_{\theta} \langle \mathbf u_1, \stable 02 \rangle
\end{array}$   
& $e^{e^{\sqrt x}}$\\
\hline

\tspace
$G_2$ & $\mathbf u_2$ & $
\langle
\stable 21, \stable 22, \stable 23, \stable 24 
\rangle \times
\langle \mathbf u_1\rangle$ 
&  $e^{e^x}$\\
\hline

\tspace
$H_3$
& 
$\begin{array}{c}
\tspace
\stable 31\\ \tspace
\stable 32\\ \tspace
\stable 33 \\ \tspace
\stable 34 \\ \tspace
\stable 35 \\ \tspace
\stable 36 \\ \tspace
\stable 37 \\ \tspace
\stable 38
\end{array}$
& 
$\begin{array}{c}
\tspace
\langle \stable 21 \rangle \rtimes_{\theta} \langle \mathbf u_2, \mathbf y \rangle \\
\tspace 
\langle \stable 22 \rangle \rtimes_{\theta} \langle \mathbf u_2 , \mathbf y \rangle \\
\tspace 
\langle \stable 23 \rangle \rtimes_{\theta} \langle \mathbf u_2, \stable 01 \rangle \\
\tspace
\langle \stable 24 \rangle \rtimes_{\theta} \langle \mathbf u_2, \stable 02\rangle \\
\tspace
\langle
\mathbf u_1^{-1} \stable 21  \rangle \rtimes_{\theta} \langle \mathbf u_2, \stable 11  \rangle\\
\tspace
\tspace
\langle
\mathbf u_1^{-1}\stable 22 \rangle \rtimes_{\theta} \langle \mathbf u_2, \stable 12 \rangle\\
\tspace
\langle
\mathbf u_1^{-1}\stable 23 \rangle \rtimes_{\theta} \langle \mathbf u_2, \mathbf u_0^{-1}\stable 11  \rangle\\
\tspace
\langle
\mathbf u_1^{-1}\stable 24 \rangle \rtimes_{\theta} \langle \mathbf u_2, \mathbf u_0^{-1}\stable 12  \rangle\\
\end{array}$
&
$e^{e^{e^{\sqrt x}}}$\\ 
\hline

\tspace
\vdots & \vdots &\vdots &\vdots \\
\hline
$
\begin{array}{c}
\tspace
H_n \\
\tspace
(n\geq 1) 
\end{array}
$ &
$\begin{array}{c}
\tspace
\stable ni \\
\tspace
1 \leq i \leq 2^{n-1} \\
\tspace
\stable ni \\
\tspace
2^{n-1} <  i \leq 2^{n}
\end{array}$ 
& 
$\begin{array}{c}
\tspace
\langle \stable {(n-1)}i \rangle \rtimes_{\theta} \langle \mathbf u_{n-1}, \mathcal{L}_n(i) \rangle\\
\tspace 
1 \leq i \leq 2^{n-1} \\
\tspace
\langle \mathbf u_{n-2}^{-1} \stable {(n-1)}j \rangle \rtimes_{\theta} \langle \mathbf u_{n-1}, \mathcal{L}_n(j+2^{n-1}) \rangle  \\
\tspace
1 \leq j \leq 2^{n-1}, \;j=i-2^{n-1}
\tspace
\end{array}
$
& $\exp^{n}{\sqrt x}$\\
\hline
$
\begin{array}{c}
\tspace
G_n \\
\tspace
(n\geq 1) 
\end{array}
$
& $\mathbf u_n$ & 
$
\langle
\stable nj \rangle_{j=1}^{2^n} \times \langle \mathbf u_{n-1} \rangle$
& $\exp^n x$\\
\hline
\end{tabular}
\end{center}
\end{table}

}

\medskip

\subsection {Further details about Table~\ref{groups-table}.}  
This subsection provides a precise description of the edge groups 
$F_2\rtimes_{\theta} F_4$ in the definition of $H_n$.  
On a first reading the reader may wish to focus on the first few groups in 
Table~\ref{groups-table} (up to $G_2$).
The arguments for the lower bounds for the first few groups 
(Section~\ref{somespheres}) and for the upper bounds in general (Sections~\ref{sec:upper} 
and~\ref{sec:area-distortion}) can be worked through without a thorough knowledge of the general labeling.

Recall that $H_n$ is obtained from $G_n$ by attaching suspended wings 
along a collection of $2^n$ subgroups of the form $F_2\rtimes_{\theta} F_4$.  
These are labeled as follows:
$$
\langle \stable {(n-1)}i \rangle \rtimes_{\theta} \langle \mathbf u_{n-1}, \mathcal{L}_n(i) \rangle, \quad \text{ with } 
1 \leq i \leq 2^{n-1} \\
$$ and $$
\langle \mathbf u_{n-2}^{-1} \stable {(n-1)}j \rangle \rtimes_{\theta} \langle \mathbf u_{n-1}, \mathcal{L}_n(j+2^{n-1}) \rangle, \quad  \text{ with }
1 \leq j \leq 2^{n-1}\,. 
$$
Lemma~\ref{lem:edge-str} establishes that these groups are indeed isomorphic to $F_2\rtimes_{\theta} F_4$.  
Note that the generators of the $F_2$ factors are either the stable letters of $H_{n-1}$ or are ${\bf u}_{n-2}^{-1}$ times these stable letters. Note also that 
two of the generators of the $F_4$ factor are always ${\bf u}_{n-1}$, the 
stable letters in the definition of $G_{n-1}$. The remaining generators of the $F_4$ factor are described using the ordered list $\mathcal{L}_n$, which is defined recursively as follows:
\begin{align*}
&\mathcal{L}_1= \{ {\bf y, \; y} \} \\
&\mathcal{L}_2= \{ {\bf y,  \; y,} \; \stable 01, \; \stable 02 \}\\
&\mathcal{L}_n = \{ \mathcal{L}_{n-1},  \;\stable {(n-2)}1,\ldots,\stable {(n-2)}{2^{n-2}}, \; \mathbf u_{n-3}^{-1}\stable {(n-2)}1,\ldots, \mathbf u_{n-3}^{-1}\stable{(n-2)}{2^{n-2}}\},
\end{align*}
where this last equation is for $n \geq 3$. We let $\mathcal{L}_n(i)$ denote the $i$th element of $\mathcal{L}_n$. For $n= 1, 2$, and $3$ these labels are given explicitly in Table~\ref{groups-table}.

We now establish a few facts that will be 
used repeatedly in the proof of Lemma~\ref{lem:edge-str}.
Let $M(G, {\bf t})$ denote a multiple HNN-extension with base group $G$ and 
stable letters represented by the vector ${\bf t}$ (which may have more than two coordinates).

\begin{observation} \label{obs:freeproduct}
If $H$ is a subgroup of $G$ such that the intersection of $H$ with any of the edge groups of $M(G, {\bf t})$ is trivial, then 
the subgroup $\langle H, {\bf t} \rangle$ inside $M(G, {\bf t})$ is isomorphic to $H \ast \langle {\bf t}\rangle$.
\end{observation}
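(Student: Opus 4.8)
The plan is to exhibit a normal form for elements of $\langle H, {\bf t}\rangle$ and show it matches the normal form for the free product $H \ast \langle {\bf t}\rangle$. First I would recall Britton's Lemma for the multiple HNN-extension $M(G,{\bf t})$: a word $g_0 t_{i_1}^{\epsilon_1} g_1 t_{i_2}^{\epsilon_2}\cdots t_{i_k}^{\epsilon_k} g_k$ (with each $g_j \in G$ and each $t_{i_j}$ one of the stable letters) represents a nontrivial element of $M(G,{\bf t})$ whenever it is \emph{reduced}, i.e.\ contains no pinch $t_i^{-1} g\, t_i$ with $g$ in the relevant edge group (nor $t_i\, g\, t_i^{-1}$ with $g$ in the image edge group). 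Now take a nonempty freely reduced word in the elements of $H$ and the letters ${\bf t}$ representing an element of $\langle H, {\bf t}\rangle$; after amalgamating consecutive $H$-syllables it has the form $h_0 t_{i_1}^{\epsilon_1} h_1 \cdots t_{i_k}^{\epsilon_k} h_k$ with each intermediate $h_j$ a nontrivial element of $H$.

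The key step is to check that such a word is reduced in the sense of Britton's Lemma. A pinch would require some intermediate $h_j \in H$ to lie in one of the edge groups of $M(G,{\bf t})$ (or in its image under the edge map). By hypothesis, the intersection of $H$ with any edge group of $M(G,{\bf t})$ is trivial, so no such $h_j$ can be a pinch element unless $h_j = 1$, which we have excluded by free reduction. One subtlety is that the ``image'' edge group (the target of the non-inclusion edge map) could in principle meet $H$ nontrivially even if the ``source'' one does not; I would either interpret the hypothesis as covering both edge subgroups, or note that in every application in this paper the two edge subgroups are $H$-conjugate/handled symmetrically, so the triviality hypothesis applies to both. Granting this, the word is reduced, hence by Britton's Lemma it represents a nontrivial element of $M(G,{\bf t})$, and in particular a nontrivial element of the subgroup $\langle H,{\bf t}\rangle$.

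This shows that the natural surjection $H \ast \langle {\bf t}\rangle \to \langle H, {\bf t}\rangle$ sends every nontrivial reduced word to a nontrivial element, hence is injective, hence an isomorphism. The main obstacle is simply the bookkeeping of Britton reduction in the multiple-stable-letter setting and making sure the no-pinch condition is exactly what the hypothesis on edge-group intersections rules out; there is no real geometric or combinatorial difficulty beyond that. I would keep the write-up short, citing Britton's Lemma and reducing everything to the one-line observation that intermediate syllables in $H$ are nontrivial and therefore (by hypothesis) not pinch elements.
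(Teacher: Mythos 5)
Your Britton's-Lemma argument is correct, and it is the standard (essentially the only) proof of this fact; the paper states it as an Observation without proof, so there is nothing to compare against. The subtlety you raise about the two edge subgroups is real in general, but is moot in this paper's setting: every non-inclusion edge map used here is an injective endomorphism (a composite of $\varphi$'s and identities) whose image lies inside the named edge group $\Gamma$, so both subgroups against which Britton's Lemma requires no pinching are contained in $\Gamma$, and checking $H\cap\Gamma=1$ already rules out pinches of both types $t\,g\,t^{-1}$ and $t^{-1}g\,t$. With that observation your proof is complete as written.
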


In the following lemma, we use the notation of Remark~\ref{rem:vector}.

\begin{lemma}\label{lem:trivialintersection}
Let $M(M(G,{\bf t}), {\bf s})$ be an iterated multiple HNN-extension.  Let ${\bf b}$ and ${\bf c}$ be vectors consisting of elements of $G$ with the following properties.  
\begin{enumerate}
\item The intersection of $\langle {\bf c} \rangle$ with each edge group of $M(M(G,{\bf t}),{\bf s})$ is trivial.
\item The vector ${\bf b}$ has the same number of coordinates as ${\bf t}$. 
\end{enumerate}
Then $\langle {\bf b}{\bf t} \rangle$ is a free group whose rank equals the number of coordinates of ${\bf t}$ and 
$$\langle {\bf b}{\bf t} \rangle \cap \langle 
{\bf s} , {\bf c}\rangle = 1. $$
\end{lemma}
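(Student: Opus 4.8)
The plan is to work with the normal-form / Britton's-lemma structure of the iterated multiple HNN-extension $M(M(G,{\bf t}),{\bf s})$, viewed first as an HNN-extension over $M(G,{\bf t})$ with stable letters ${\bf s}$, and then $M(G,{\bf t})$ itself as an HNN-extension over $G$ with stable letters ${\bf t}$. First I would prove the freeness claim: since ${\bf b}$ consists of elements of $G$ and ${\bf t}$ are stable letters of the HNN-extension $M(G,{\bf t})$, any nontrivial reduced word in the coordinates of ${\bf b}{\bf t}$ is a word in which the ${\bf t}$-letters alternate with (possibly empty) syllables from $G$; because the pinch words needed to apply a Britton reduction would require a $G$-syllable lying in an edge group adjacent to the relevant stable letter, and between consecutive ${\bf t}$-letters the only $G$-syllables that occur are coordinates of ${\bf b}$ and their inverses, one checks that no Britton reduction is possible unless the word was already trivial in the free group on ${\bf b}{\bf t}$. (Here one uses that distinct coordinates of ${\bf t}$ are distinct stable letters, so the rank equals the number of coordinates of ${\bf t}$; this is essentially Observation~\ref{obs:freeproduct} applied with $H=\langle {\bf b}\rangle$, once one knows $\langle{\bf b}\rangle\cap(\text{edge groups of }M(G,{\bf t}))=1$ — which follows from hypothesis (1) restricted to those edge groups, since $\langle{\bf b}\rangle\subseteq\langle{\bf b},{\bf c}\rangle$... more carefully, ${\bf b}$ enters via ${\bf c}$'s role only through the intersection statement, so I would instead observe directly that $\langle {\bf b}\rangle\le G$ meets no edge group of $M(G,{\bf t})$ nontrivially, which we may assume or derive from the explicit forms.)

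Next I would prove the intersection statement $\langle {\bf b}{\bf t}\rangle \cap \langle {\bf s},{\bf c}\rangle = 1$. Take a nontrivial element $w\in\langle{\bf b}{\bf t}\rangle$; writing it in reduced form as an alternating product of ${\bf t}^{\pm1}$-letters and ${\bf b}$-syllables, it contains at least one ${\bf t}$-letter. I claim such a $w$ cannot lie in $\langle{\bf s},{\bf c}\rangle$. Consider any element $v\in\langle{\bf s},{\bf c}\rangle$: expand it as a word in ${\bf s}^{\pm1}$ and ${\bf c}^{\pm1}$, and apply Britton's lemma for the HNN-extension $M(M(G,{\bf t}),{\bf s})$ over base $M(G,{\bf t})$. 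By hypothesis (1) the subgroup $\langle{\bf c}\rangle$ meets each ${\bf s}$-edge group trivially, so no pinch across an ${\bf s}$-letter is possible; hence either $v$ has a genuine ${\bf s}$-letter in its reduced form (and so does $w=v$, contradicting that $w$, a word in ${\bf b}\subseteq G$ and ${\bf t}$, has ${\bf s}$-exponent-sum zero and indeed no ${\bf s}$-letters at all after Britton reduction over $M(G,{\bf t})$), or $v$ reduces to an element of $\langle{\bf c}\rangle\le G$. In the latter case $w=v\in G$, but $w$ was a reduced word in $\langle{\bf b}{\bf t}\rangle$ containing a ${\bf t}$-letter, so by Britton's lemma for $M(G,{\bf t})$ over $G$ it is not in $G$ — contradiction. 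Therefore $w\notin\langle{\bf s},{\bf c}\rangle$, proving the intersection is trivial.

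The main obstacle is bookkeeping the two levels of HNN-extension cleanly: one must be careful that "${\bf s}$-letters cannot pinch" really uses only hypothesis (1) and not any unstated fact about ${\bf b}$, and that in the reduced form of $w$ over $M(G,{\bf t})$ the syllables from $G$ are exactly products of coordinates of ${\bf b}^{\pm1}$ — in particular they never accidentally land in a ${\bf t}$-edge group (which would collapse the ${\bf t}$-length and potentially move $w$ into $G$). I would handle this by noting that a reduced word in $\langle{\bf b}{\bf t}\rangle$ has $G$-syllables that are single letters $b_i^{\pm1}$ separated by single ${\bf t}$-letters (no cancellation in the free group $\langle{\bf b}{\bf t}\rangle$), so a pinch would force some $b_i^{\pm1}$ into a ${\bf t}$-edge group; ruling this out is exactly the statement that $\langle{\bf b}\rangle$ meets the ${\bf t}$-edge groups trivially, which in the applications follows from the explicit descriptions in Table~\ref{groups-table} (or can be added as an implicit standing assumption, as the coordinates of ${\bf b}$ in every use are basis elements of a free factor disjoint from the edge-group generators). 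With that point secured, both conclusions follow from two applications of Britton's lemma.
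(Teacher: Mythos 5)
Your proposal is correct in outline but takes a genuinely different route from the paper, and it contains one soft spot that you flag yourself but resolve in the wrong way. You work entirely with Britton's lemma at both levels of the iterated HNN-extension, and you worry repeatedly that you might need the additional hypothesis that $\langle {\bf b}\rangle$ meets the ${\bf t}$-edge groups trivially, offering to ``assume or derive it from the explicit forms.'' That hypothesis is not part of the lemma's statement and, more to the point, it is not needed: when you expand a reduced word in $F({\bf b}{\bf t})$ into a ${\bf t}$-syllable word over $M(G,{\bf t})$, the only way to get adjacent ${\bf t}$-letters with the same index and opposite signs (the precondition for a Britton pinch) is via a free cancellation of $(b_it_i)(b_it_i)^{-1}$ inside $F({\bf b}{\bf t})$ itself, which you have already excluded by taking a reduced word. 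In other words, the pinch candidate never materialises, so whether or not some $b_i^{\pm1}$ happens to lie in a ${\bf t}$-edge group is irrelevant. You should verify this carefully rather than import an unstated hypothesis; as written, your proof proves a restricted version of the lemma rather than the lemma as stated.

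The paper sidesteps all of this with a retraction argument that is both shorter and structurally cleaner. There is a retraction $\rho\!: M(G,{\bf t}) \to F = \langle {\bf t}\rangle$ that kills $G$ and fixes the stable letters. It sends $\langle {\bf b}{\bf t}\rangle$ onto $F$ (each $b_it_i \mapsto t_i$), and since $\langle {\bf b}{\bf t}\rangle$ has at most $|{\bf t}|$ generators, a Hopficity argument forces $\rho|_{\langle {\bf b}{\bf t}\rangle}$ to be an isomorphism --- this gives freeness of the correct rank with no pinch bookkeeping whatsoever. For the intersection, both you and the paper use hypothesis (1) together with Observation~\ref{obs:freeproduct} to strip the ${\bf s}$-letters off an element of $\langle {\bf s},{\bf c}\rangle$ lying in the base; at that point the paper simply notes that the resulting word $w_3({\bf c})$ lies in $G$, hence $\rho(w_3)=1$, hence $\rho(w_2)=1$, hence $w_2$ is trivial because $\rho$ is injective on $\langle{\bf b}{\bf t}\rangle$. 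This replaces your second Britton application (arguing $w\notin G$) with a one-line homomorphism computation, and it is exactly what makes the extra hypothesis you were tempted to add unnecessary. Your approach is salvageable and gives the same result, but you should either carry out the pinch analysis honestly or, better, switch to the retraction.
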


\begin{proof}
There is a retraction $\rho$ of $M(G,{\bf t})$ onto the free group $F$ on ${\bf t}$, such that $\rho(g)=1$ for all $g \in G$.  The subgroup $\langle {\bf bt} \rangle$ of $M(G,{\bf t})$ 
has the same number of generators as $F$ and maps onto $F$ under 
this retraction. It is therefore free. 

Now suppose $w_1= w_1({\bf s}, {\bf c})$ 
and $w_2=w_2({\bf b t})$ are words that represent the same element 
of $M(M(G,{\bf t}),{\bf s})$. 
Note that $w_1$ is contained in $\langle   {\bf s}, {\bf c} \rangle$, which is isomorphic to 
$\langle {\bf s} \rangle * \langle {\bf c} \rangle$, by Condition (i) above and Observation~\ref{obs:freeproduct}.  
Moreover, $w_1$ represents an element of the base group for $M(M(G,{\bf t}), {\bf s})$, since $w_2$ does.   These two facts imply that $w_1$ can be freely reduced to a word that does not involve any coordinates of ${\bf s}$, say to $w_3 =w_3({\bf c})$.  Note that $w_3$ actually represents an element of $G$, so that $\rho(w_3)=1$.  This implies that $\rho(w_2)=1$.  Since $w_2$ is a word in ${\bf bt}$, this can only happen if $w_2$ is trivial. 
\end{proof}

\begin{remark}\label{rem:stablefree}
The subgroup $\langle {\bf t} \rangle$ of $M(G,{\bf t})$ is free.  
Since $M(G,{\bf t}) \to M(M(G, {\bf t}), {\bf s})$ is an injection, the stable letters ${\bf t}$ also generate a free group in $M(M(G, {\bf t}),{\bf s})$.  This will be used repeatedly in the proof of Lemma~\ref{lem:edge-str}.    
\end{remark}

\begin{lemma}[Structure of edge groups]\label{lem:edge-str}
The subgroup $\langle \stable 0i, {\bf u}_0, {\bf y} \rangle$  (where $i=1$ or $2$) of $G_0$ is isomorphic to $F_2 \rtimes_\theta F_4$.  Further, for each $n\geq 1$, the following statements hold.  

\noindent
{Coning:}
\begin{enumerate}
\item[(1)] The subgroup $\langle \stable n1, \ldots, \stable n{2^n}, {\bf u}_{n-1}\rangle$
of $H_n$ is isomorphic to $F_{2^{n+1}} \times F_2$, where the $F_2$ factor has basis ${\bf u}_{n-1}$.
\end{enumerate}

\noindent
{Suspended wings:}
\begin{enumerate}
\item[(2)] For $1 \leq i \leq 2^{n}$ the subgroup  
$\langle \stable {n}i, \mathbf u_{n}, \mathcal{L}_{n+1}(i) \rangle$ of $G_{n}$ is isomorphic to $F_2 \rtimes_{\theta} F_4$, where $F_2$ has basis $\stable {n}i $ and $F_4$ has ordered basis $\{\mathbf u_{n}, \mathcal{L}_{n+1}(i) \}$.
\item[(3)] 
For $1 \leq j \leq 2^{n}$, the subgroup 
$\langle \mathbf u_{n-1}^{-1} \stable {n}j, \mathbf u_{n}, \mathcal{L}_{n+1}(j+2^{n}) \rangle$ of $G_{n}$ is isomorphic to 
$F_2 \rtimes_{\theta} F_4$, where $F_2$ has basis $\mathbf u_{n-1}^{-1} \stable {n}j$ and $F_4$ has ordered basis $\{\mathbf u_{n}, \mathcal{L}_{n+1}(j+2^{n}) \}$.

\end{enumerate}
\end{lemma}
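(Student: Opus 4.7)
The plan is to establish all three parts of the lemma simultaneously by induction on $n$, alternating within each inductive step between the suspended wings construction (which forms $H_n$ from $G_{n-1}$ and establishes (1)) and the coning construction (which forms $G_n$ from $H_n$ and establishes (2) and (3)). For the base case, I would verify the structure of $\langle \stable 0i, \mathbf u_0, \mathbf y \rangle$ directly from the definition of $G_0$: since $G_0$ is a double HNN extension of $F_2 \times F_2 \times F_2$ with edge group $\langle \stable 01, \stable 02 \rangle$, and $\mathbf y$ lies in the base but intersects that edge group trivially, Observation~\ref{obs:freeproduct} yields $\langle \mathbf u_0, \mathbf y \rangle \cong F_4$; the HNN relation then supplies the $\varphi$-action of $\mathbf u_0$ on $\stable 0i$, and commutation of $\stable 0i$ with $\mathbf y$ is inherited from the product structure of the base.

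For the inductive step, I would first prove (1) for $H_n$ assuming (2) and (3) hold for $G_{n-1}$, so that each edge group of the suspended wings construction really has the form $F_2 \rtimes_\theta F_4$ with $\mathbf u_{n-1}$ lying in the $F_4$ factor. The suspended wings relation then forces each new stable letter $\stable ni$ to commute with $\mathbf u_{n-1}$. Freeness of $\langle \stable n1, \ldots, \stable n{2^n} \rangle$ and triviality of its intersection with $\langle \mathbf u_{n-1} \rangle$ both follow from the retraction $H_n \twoheadrightarrow F_{2^{n+1}}$ that kills $G_{n-1}$ (a special case of the argument behind Lemma~\ref{lem:trivialintersection}). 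Together with pairwise commutation, this yields the direct-product decomposition required by (1).

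Next I would prove (2) and (3) for $G_n$ assuming (1) for $H_n$. Since $G_n$ is a double HNN extension of $H_n$ along $\Gamma = F_{2^{n+1}} \times F_2$, the coning relation supplies the $\varphi$-action of $\mathbf u_n$ on both $\stable ni$ and $\mathbf u_{n-1}$. For (2) the commutation $[\stable ni, \mathcal{L}_n(i)] = 1$ is built into the construction of $H_n$: by definition, $\mathcal{L}_n(i)$ appears in the $F_4$ factor of the $i$-th edge group of the suspended wings step. The content of (3) is a telescoping observation: for $1 \leq j \leq 2^{n-1}$, both $\stable nj$ (via suspended wings in $H_n$) and $\mathbf u_{n-1}$ (via coning in $G_{n-1}$) conjugate $\stable {(n-1)}j$ by the same automorphism $\varphi$, so $(\mathbf u_{n-1}^{-1} \stable nj)\, \stable{(n-1)}j\, (\stable nj^{-1} \mathbf u_{n-1}) = \mathbf u_{n-1}^{-1} \varphi(\stable{(n-1)}j) \mathbf u_{n-1} = \stable{(n-1)}j$, which is exactly $\mathcal{L}_{n+1}(j+2^n)$. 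An analogous cancellation, using $[\mathbf u_{n-2}, \stable {(n-1)}k] = 1$ supplied by (1) applied to $H_{n-1}$, handles the range $2^{n-1} < j \leq 2^n$. Finally, $\mathbf u_{n-1}^{-1} \stable nj$ freely generates a rank-$2$ subgroup because $\mathbf u_{n-1}$ and $\stable nj$ together span an $F_2 \times F_2$ inside $\Gamma$ by (1), and $\mathbf u_n$ acts on it by $\varphi$ because it does so on each of the two commuting factors.

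The main obstacle I anticipate is not the conceptual content above but the bookkeeping required to verify that the proposed $F_4$ factor $\langle \mathbf u_n, \mathcal{L}_{n+1}(i) \rangle$ (and the analogue with $\mathcal{L}_{n+1}(j+2^n)$) really is free of rank $4$. The entries of $\mathcal{L}_{n+1}$ live at varying depths in the tower of HNN extensions: some lie in $\mathcal{L}_n$ and so at depth $\leq n-1$, while the newly added ones involve $\stable {(n-2)}k$ or $\mathbf u_{n-3}^{-1} \stable {(n-2)}k$. Certifying freeness and the trivial-intersection conditions needed to invoke Lemma~\ref{lem:trivialintersection} at each level, while tracking which vectors survive each retraction, is routine but delicate and will be the most error-prone part of the write-up.
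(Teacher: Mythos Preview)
Your proposal is correct and follows essentially the same approach as the paper's proof: induction on $n$, with the base case handled directly via Observation~\ref{obs:freeproduct}, part (1) via the retraction of $H_n$ onto the free group on its stable letters (the paper's Remark~\ref{rem:stablefree}), and parts (2)--(3) via the action/commutation checks you describe together with Lemma~\ref{lem:trivialintersection} for the trivial-intersection and $F_4$-freeness verifications. The paper organizes the $F_4$ bookkeeping exactly as you anticipate, first showing each $\langle \mathcal L_{n+1}(i)\rangle$ is free of rank two and misses the coning edge group, then invoking Observation~\ref{obs:freeproduct} and Lemma~\ref{lem:trivialintersection}; your ``telescoping'' description of the commutation in (3) is the same computation the paper records as ``both $\mathbf u_{n-1}$ and $\stable nj$ act on $\mathcal L_{n+1}(j+2^n)$ via $\varphi$.''
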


\begin{proof}
The proof is  by induction on $n$.  

For the base case, (the subgroup $\langle \stable 0i, {\bf u}_0, {\bf y} \rangle$ of $G_0$ 
with $i=1$ or $2$)
note that the subgroups $\langle \stable 0i \rangle$, and $\langle {\bf y} \rangle$ 
are free factors of the base group $(F_2)^3$ (see Table~\ref{groups-table}).  By 
Remark~\ref{rem:stablefree}, 
$\langle {\bf u}_0 \rangle \simeq F_2$ since it is generated by the stable letters of $G_0$.  Now $\langle {\bf u_0}, {\bf y} \rangle$ is isomorphic to $F_4$ by Observation~\ref{obs:freeproduct}.  The fact that $\langle {\bf u_0}, {\bf y} \rangle$ acts 
on $\langle \stable 0i \rangle$ via $\theta$ is obvious by construction.  Note that 
$G_0$ can be thought of as a single multiple HNN extension $M(F_2 \times F_2, \bf y,  {\bf u}_0)$.
Then $\langle {\bf u_0}, {\bf y} \rangle$, consisting of stable letters, intersects the subgroup 
$\langle  \stable 0i\rangle$ of the edge group trivially.  
This shows that $\langle \stable 0i, {\bf u_0}, {\bf y} \rangle$ is isomorphic to $F_2 \rtimes_{\theta} F_4$.

{\it Inductive step for (1):} 
The group $H_n$ is a multiple HNN extension $M( G_{n-1}, \stable n1, \ldots, \stable n{2^n} )$.  So by Remark~\ref{rem:stablefree}, the subgroup 
  $\langle \stable n1, \ldots, \stable n{2^n} \rangle$ is isomorphic to $F_{2^{n+1}}$.  Further, it intersects $G_{n-1}$, and hence  $\langle {\bf u}_{n-1} \rangle $, trivially.  Lastly, $\langle {\bf u}_{n-1} \rangle $ is isomorphic to $F_2$ since it consists of stable letters introduced in the construction of $G_{n-1}$.  It follows that 
$\langle \stable n1, \ldots, \stable n{2^n}, {\bf u}_{n-1}\rangle$
of $H_n$ is isomorphic to $F_{2^{n+1}} \times F_2$, where the $F_2$ factor has basis ${\bf u}_{n-1}$.  

{\it Suspended wings:}
We first analyze the subgroups $\langle \mathcal L_{n+1}(i) \rangle$.
Note that for $k \leq n$, the subgroups 
$\langle \stable ki \rangle$, where $1 \leq i \leq 2^{k}$, and 
$\langle {\bf u}_k \rangle$ of $G_{n}$ are isomorphic to $F_2$ by 
Remark~\ref{rem:stablefree}.  Then the diagonal subgroups 
$\langle {\bf u}_{k-1}^{-1}\stable ki\rangle$ are isomorphic to $F_2$ as well.  Since every
vector $\mathcal L_{n+1}(i)$ is either $\stable ki$ or ${\bf u}_{k-1}^{-1}\stable ki$ for some $k \leq n-1$, the subgroup $\langle \mathcal L_{n+1}(i)\rangle$ of $G_{n}$ is isomorphic to $F_2$ for $1 \leq i \leq 2^{n+1}$.

Note that, as shown above, the edge group in the construction of $G_{n}$ is isomorphic to
$\langle \stable {n}j \rangle_{j=1}^{2^{n}} \times \langle \mathbf u_{n-1} \rangle$.  In particular, $\langle \mathcal L_{n+1}(i) \rangle$ does not intersect the edge group, and 
Observation~\ref{obs:freeproduct} implies that $\langle {\bf u}_{n},  
\mathcal L_{n+1}(i)\rangle$ is isomorphic to $F_4$ for all $1 \leq i \leq 2^{n+1}$.

{\it Inductive step for (2):}
Let $1\leq i \leq 2^{n}$.
By construction, the stable letters ${\bf u}_{n}$ for $G_{n}$ act on $\stable {n}i$ via $\varphi$. The vector $\mathcal L_{n+1}(i)$ is the same as $\mathcal L_{n}(i)$, and 
the fact that $\mathcal L_{n}(i)$ commutes with $\stable {n}i$ follows from the construction of $H_{n}$.  This shows that $\langle {\bf u}_{n}, \mathcal L_{n+1}(i)\rangle$  acts on 
$\langle \stable {n}i \rangle$ via $\theta$. 
Since $G_n= M(M(G_{n-1},  \stable n1 \ldots \stable n{2^n} ), {\bf u}_n)$, 
 Lemma~\ref{lem:trivialintersection}  (with ${\bf b}$ trivial and ${\bf c}= \mathcal L_{n+1}(i)$)
 shows that 
$\langle {\bf u}_{n}, \mathcal L_{n+1}(i)\rangle \cap \langle \stable {n}i \rangle$ is trivial.
 Thus the subgroup $\langle \stable {n}i, \mathbf u_{n}, \mathcal{L}_{n+1}(i) \rangle$ of $G_{n}$ is isomorphic to $F_2 \rtimes_{\theta} F_4$.

{\it Inductive step for (3):} Let $1\leq j \leq 2^{n}$. Then 
${\bf u}_{n}$ acts on $\stable {n}j$ and
${\bf u}_{n-1}$ (and hence on ${\bf u}_{n-1}^{-1}\stable {n} j$) via $\varphi$.
Recall that 
$$
\mathcal{L}_{n+1}(j+2^{n}) =
\begin{cases}
 \stable{(n-1)}j  & \text{for} \;\; 1 \leq j \leq 2^{n-1}  \\
{\bf u}_{n-2}^{-1} \stable{(n-1)}{(j-2^{n-1})}  & \text{for}\;\; 2^{n-1} < j \leq 2^{n} \\
\end{cases}
$$
and observe (from the definitions of $G_{n-1}$ and $H_{n}$) that both ${\bf u}_{n-1}$ and $\stable {n}j$ act on 
$\mathcal L_{n+1}(j+2^{n})$ via $\varphi$. 
It follows that $\mathcal{L}_{n+1}(j+2^{n})$ commutes with $\mathbf u_{n-1}^{-1} \stable {(n)}j$.
Thus 
$\langle \mathbf u_{n}, \mathcal{L}_{n+1}(j+2^{n}) \rangle$ acts on 
$\langle{\bf u}_{n-1}^{-1}\stable {n} j\rangle$ via $\theta$. The fact that these two 
groups have trivial intersection follows from Lemma~\ref{lem:trivialintersection} (taking ${\bf b}= {\bf u}_{n-1}^{-1}$ and 
${\bf c}=  \mathcal{L}_{n+1}(j+2^{n})$).  Thus 
$\langle \mathbf u_{n-1}^{-1} \stable {n}j, \mathbf u_{n}, \mathcal{L}_{n+1}(j+2^{n}) \rangle$ is isomorphic to 
$F_2 \rtimes_{\theta} F_4$.
\end{proof}

\subsection{The cell structure in $K_{G_n}$ and $K_{H_n}$}
\label{sec:cells}
In this section we describe $3$-dimensional $K(\pi, 1)$'s for 
$G_n$ and $H_n$. 
Start with the standard cell structure (one $0$-cell and two $1$-cells) on a bouquet of two circles.  The product of three copies of this with the product cell structure is defined to be $K_{H_0}$. 

Since $G_n$ and $H_n$ are defined recursively as graphs of groups with 
$2$-dimensional edge groups and $3$-dimensional vertex groups, the complexes $K_{G_n}$ and $K_{H_n}$ are constructed inductively as total spaces of graphs of $3$-dimensional vertex spaces and $2$-dimensional edge spaces.

Here is the model situation.  
Let $X^{(3)}$ be a $3$-dimensional cell complex and let $Y_1^{(2)}, \ldots Y_k^{(2)}\subset X^{(3)}$ be $\pi_1$-injective $2$-dimensional subcomplexes. 
For each $i$, let $\Phi_i\!:Y_i^{(2)} \to Y_i^{(2)}$ be a cellular map inducing an automorphism in $\pi_1$.  We define a new $3$-complex $Z^{(3)}$ by 
\[
Z^{(3)} \; = \;\left. X^{(3)} \, \sqcup \, \left( \sqcup_i Y_i^{(2)}\times [0,1] \right)  \,
\right/ \sim 
\]
where $\sim$ identifies $Y_i^{(2)} \times 0$ with $Y_i^{(2)}$ (via the inclusion map) and $Y_i^{(2)} \times 1$ with $\Phi(Y_i^{(2)})$ for $1 \leq i \leq k$.
Each $Y_i^{(2)} \times [0,1]$ is given the product cell structure with a subdivision at the $Y_i^{(2)} \times 1$ end, obtained by pulling back the standard cell structure on $Y_i^{(2)}$ via $\Phi_i$.  

To get $K_{G_n}$ from $K_{H_n}$ we apply this model situation, taking $X^{(3)}$ to be $K_{H_n}$ and the $Y_i^{(2)}$ to be the subcomplexes corresponding to the edge groups listed in the $G_n$-row of the third column of Table~\ref{groups-table}.  

To get $K_{H_{n}}$ from $K_{G_{n-1}}$ we first apply this model situation taking $X^{(3)}$ to be $K_{G_{n-1}}$ and the $Y_i^{(2)}$ to be the subcomplexes corresponding to the edge groups listed in the $H_{n}$-row of the third column of Table~\ref{groups-table}.  We subdivide the resulting $3$-complex by introducing edges labeled ${\bf u}_{n-1}^{-1}\stable{n}{j}$ and $2$-cells corresponding to the commutation relations $[{\bf u}_{n-1}^{-1} \stable {n}j, \mathcal {L}_{n+1}(j+2^{n})]=1$. 
 (Refer to the edge groups in the $H_{n+1}$-row of Table~\ref{groups-table} for these commutation relations.) 
 Note that the $\stable{n}{j}$ run over the stable letters of $H_{n}$. Figure~\ref{fig:subdivision} demonstrates such subdivisions in the case of $K_{H_1}$.

\begin{figure}[ht]
\begin{center}
\psfrag{a}{{\small $a_{121}$}}
\psfrag{u}{{\small $u_{0}$}}
\psfrag{d}{{\small $u_{0}^{-1}a_{1ji}$}}
\psfrag{0}{{\small $a_{021}$}}
\psfrag{1}{{\small $a_{022}$}}
\psfrag{f}{{\small $a_{021}$}}
\psfrag{p}{{\small $\varphi(a_{021})$}}
\psfrag{2}{{\small $\varphi^2(a_{021})$}}
\includegraphics[width=115mm]{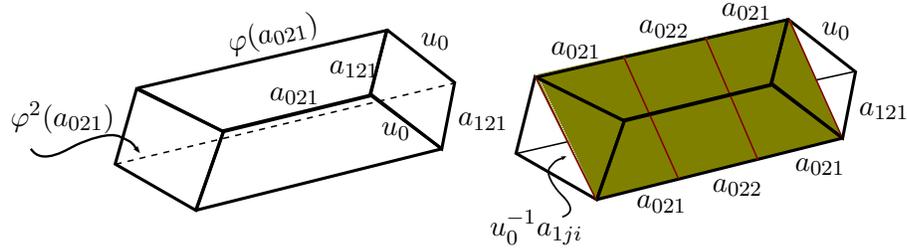}
\end{center}
\caption{Typical new 1- and 2-cells in $\widetilde{K_{H_1}}$.
\label{fig:subdivision}}
\end{figure}

\section{Lower bounds} \label{sec:lower}

\subsection{Some spheres}\label{somespheres}

As preparation for the general case, we explicitly describe some spheres in the first few groups. The computations of area and volume will be only roughly sketched, to help give a general idea. Formal verification of these computations will come later.

\medskip

\noindent{\bf Spheres in $G_0$.}
Fix $N\in\N$ and consider the slab in $\widetilde{K_{H_0}}$ (the universal cover of $K_{H_0}$) of the form $\varphi^{N}(a_{011})\times\varphi^{N}(a_{021})\times y_{1}$ (the equatorial slab in Figure~\ref{fig:G0basic} below). The volume of this slab is $|\varphi^N(\first)|^2$ (cf.~Lemma~\ref{lem:slab}).

Each of the top and bottom faces of this slab consists of 2-cells corresponding to relations in the edge group $\langle {\bf{a}}_{01}\rangle\times\langle{\bf{a}}_{02}\rangle$ in the definition of $G_0$. 
Allowing $u_{01}^{-N}$ to act on both faces produces the 3-ball shown in Figure~\ref{fig:G0basic}.
\begin{figure}[ht]
\psfrag{y}{{\small $y_1$}}
\psfrag{q}{{\small $\varphi^{N}(a_{021})$}}
\psfrag{b}{{\small $a_{021}$}}
\psfrag{a}{{\small $a_{011}$}}
\psfrag{u}{{\small $u_{01}^N$}}
\psfrag{p}{{\small $\varphi^{N}(a_{011})$}}
\begin{center}
\includegraphics[width=2.5in]{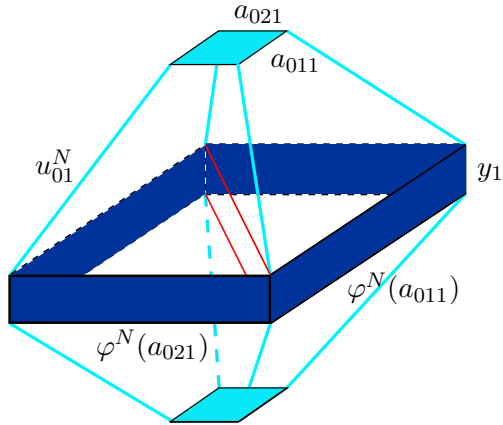}
\end{center}
\caption{The sphere $S_1(N)$ in $\widetilde{K_{G_0}}$.}
\label{fig:G0basic}
\end{figure}
Let us call the boundary sphere of this ball $S_1(N)$ in correspondence to Figure~\ref{fig:iterate}. Each of the eight trapezoidal faces of $S_1(N)$ is a van Kampen diagram for an equality of the form $\varphi^N(a_{0j1})=u_{01}^Na_{0j1}u_{01}^{-N}$. In particular, each vertical side of the trapezoid has length $N$, the top has length one, and the bottom has length $|\varphi^N(\first)|$, which is exponential in $N$. Thus the transition from the slab to $S_1(N)$ corresponds to two Type II moves, one on each face of the slab.

We can think of each trapezoidal face on $S_1(N)$ as a stack of horizontal strips. Because of the exponential growth of $\varphi$, the lengths of these strips approximate a geometric series (Lemma~\ref{lem:thetadehn}). Thus the area of a trapezoid is approximately equal to the length of its base edge, which is the same as the length of one piece of the equatorial band about the sphere. We deduce that ${\text{Area}}(S_1(N))\simeq |\varphi^N(\first)|$ (Lemma~\ref{lem:Gninduction}). The volume of the 3-ball is bounded below by the volume of the slab, which is $|\varphi^N(\first)|^2$, from which it follows that $\dehntwo{G_0}{x}\succeq x^2$ (cf.~Remarks~\ref{rem:embedded} and~\ref{rem:sparse}).

\medskip
\noindent{\bf Spheres in $H_1$.} 
Looking at $S_1(N)$ from a different point of view, we see that it consists of four belted-trapezoid pairs, each of which has the geometry marking the beginning of a Type I move (see Figure~\ref{fig:metricsub}). Each such trapezoid pair is contained in a group of the form $\langle{\bf a}_{0j}\rangle\rtimes_{\theta}\langle {\bf u}_0,{\bf y}\rangle$, an edge group in $H_1$. We again think of each trapezoid as comprised of a stack of horizontal strips with boundary labeled $u_{01}\, \varphi^{i-1}(a_{0j1})\, u_{01}^{-1}\, \varphi^i(a_{0j1}^{-1})$, where $1\leq i \leq N$.  The result of allowing $a_{1j1}^{-i}$ to act on each such strip and $a_{1j1}^{-N}$ to act on the four belts is the 
3-ball shown in Figure~\ref{fig:H1}. We call the boundary of this ball $S_2(N)$.

\begin{figure}[ht]
\psfrag{a}{{\small $y_1$}}
\psfrag{b}{{\small $(a_{111}^{-1}u_{01})^N$}}
\psfrag{c}{{\small $a_{111}^N$}}
\psfrag{d}{{\small $a_{021}$}}
\psfrag{e}{{\small $a_{121}^N$}}
\psfrag{f}{{\small $(a_{121}^{-1}u_{01})^N$}}
\psfrag{g}{{\small $a_{011}$}}
\begin{center}
\includegraphics[width=3in]{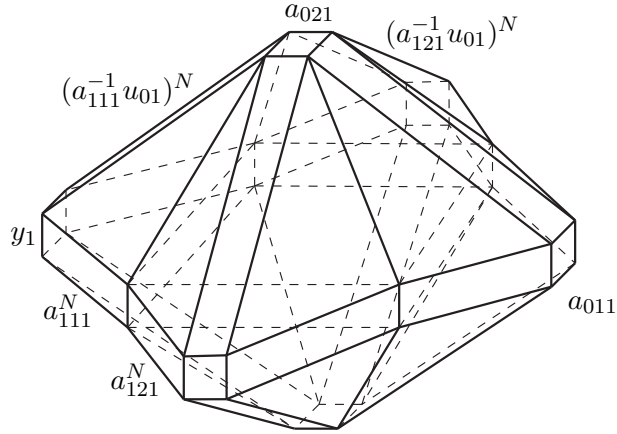}
\end{center}
\caption{The sphere $S_2(N)$ in $\widetilde{K_{H_1}}$.}
\label{fig:H1}
\end{figure}
The long sides of the horizontal strips in 
Figure~\ref{fig:H1} 
represent words of the form $a_{1j1}^N$, giving each such strip an area equal to $N$. The angled strips connected to the poles have long sides representing words of the form $(a_{1j1}^{-1}u_{01})^N$, so that these strips also have area equal to $N$ (recall from Section~\ref{sec:cells} that edges labeled $a_{1j1}^{-1}u_{01}$ are introduced into the cell structure on $K_{H_1}$). Each triangular region is a van Kampen diagram for an equation of the form $a_{1j1}^{-N}u_{01}^N=(a_{1j1}^{-1}u_{01})^N$ (recall that $a_{1j1}$ commutes with $u_{01}$). The area of such a triangle is essentially the number of commutation relations required to achieve this equality, which is of order $N^2$ (Lemma~\ref{lem:deltadehn}). Thus the area of $S_2(N)$ is of order $N^2$. On the other hand, the volume of the 3-ball is again bounded below by the volume of the slab, which is $|\varphi^N(\first)|^2\simeq (e^N)^2\simeq e^N$. It follows that $\dehntwo{H_1}{x}\succeq e^{\sqrt{x}}$.

\medskip

\noindent{\bf Spheres in $G_1$.}
To form $S_3(N)$ in $\widetilde{K_{G_1}}$, we alter the previous constructions slightly. We still begin with a slab of the form $\varphi^N(a_{011})\times\varphi^N(a_{021})\times y_1$, but now we set $N=|\varphi^M(\xi)|$ for some $M$, so that the volume of the slab, as a function of $M$, is $|\varphi^{|\varphi^M(\xi)|}|\simeq e^{e^M}$. We next allow the word $\varphi^M(u_{01}^{-1})$ (instead of $u_{01}^{-N}$) to act on the top and bottom faces of the slab, producing a sphere analogous to the one in Figure~\ref{fig:G0basic}. Performing Type I moves as before, but with $\varphi^M(a_{1j1})$ acting on the belts instead of $a_{1j1}^N$, produces a sphere analogous to the one in Figure~\ref{fig:H1}, but with labels changed as indicated in Figure~\ref{fig:G1}.
\begin{figure}[ht]
\psfrag{a}{{\small $\varphi^M(a_{111})$}}
\psfrag{A}{{\small $\varphi^M(a_{121})$}}
\psfrag{u}{{\small $\varphi^M(u_{01})$}}
\psfrag{d}{{}}
\psfrag{D}{{}}
\begin{center}
\includegraphics[width=3in]{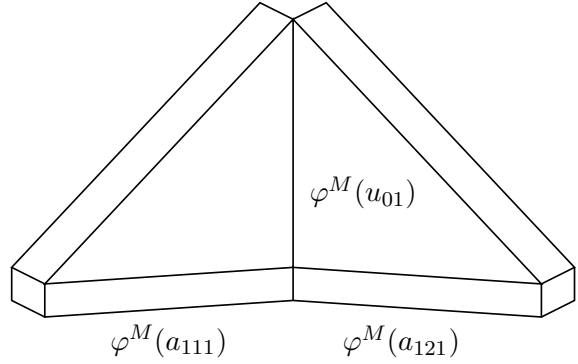}
\end{center}
\caption{A portion of a variation of the sphere in $\widetilde{K_{H_1}}$ shown in Figure~\ref{fig:H1}. The triangle pair admits an action by $u_{11}^{-M}$. Note that $\varphi^M(u_{01})$ and $\varphi^M(a_{1j1}^{-1})$ have the same word length. The labels on the diagonal edges in the figure above are formed by alternating the letters of these words.
\label{fig:G1}}
\end{figure}
The triangles in this sphere can be grouped in pairs to form eight quadrilaterals, each with the geometry marking the beginning of a Type II move. Each of these quadrilaterals is 
a van Kampen diagram labeled by elements of the group $\langle {\bf a}_{11},{\bf a}_{12}\rangle\times\langle{\bf u}_0\rangle$, the edge group for $G_1$. We now allow $u_{11}^{-M}$ to act on each of these quadrilaterals. As in $G_0$, this will produce trapezoids representing equalities of the form $u_{11}^{M}gu_{11}^{-M}=\varphi^M(g)$, where $g$ is of the form either $a_{1j1}$ or $a_{1j1}^{-1}u_{01}$. These relations have area of order $|\varphi^M(\first)|\simeq e^M$, which gives the area of the sphere. The volume of the slab is $\simeq e^{e^M}$, so that $\dehntwo{G_1}{x}\succeq e^{x}$.

\subsection{Iteration notation}\label{sec:iteration}

Before moving on to the general case, we pause briefly to develop the iteration notation necessary to avoid unwieldy towers of exponents.

Recall that the map $\varphi\!:\langle \first,\second \rangle\to\langle \first, \second \rangle$ has the form $\varphi(\first)=\first \second \first$ and $\varphi(\second)=\first$. We inductively define functions $w_n\!:\N\to\N$, setting $w_0(r)=r$ for all $r\in\N$, and
$$
w_n(r)=|\varphi^{w_{n-1}(r)}(\first)|.
$$
The growth of $|\varphi^N(\first)|$ is determined by the matrix $\begin{bmatrix} 2&1\\ 1&0\end{bmatrix}$, which has positive eigenvalue $1+\sqrt{2}$. More precisely, we have that
$$
|\varphi^N(\first)|=\left\|\begin{bmatrix}2&1\\1&0\end{bmatrix}^N\!\begin{bmatrix}1\\0\end{bmatrix}\right\|_1\approx\frac{1}{2}(1+\sqrt{2})^{N+1}.
$$
This implies that
$$
\lim_{k\to\infty}\frac{|\varphi^{k+1}(\first)|}{|\varphi^{k}(\first)|}=1+\sqrt{2}.
$$
It follows easily that
$$
\sum_{k=0}^{N}|\varphi^k(\first)|\simeq |\varphi^N(\first)|,
$$
as this sum is approximately geometric. From this discussion we deduce the following.
\begin{lemma}[Growth of $w_n$]\label{lem:growth}
For each $n$ we have
$$
\sum_{i=0}^{w_{n-1}(r)}|\varphi^i(\first)|\simeq w_n(r)\simeq\exp^n(r).
$$
\end{lemma}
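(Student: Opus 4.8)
The plan is to establish the two equivalences separately, both resting on the closed-form estimate $|\varphi^N(\first)|\approx\frac{1}{2}(1+\sqrt{2})^{N+1}$ already recorded above. First I would prove $\sum_{i=0}^{w_{n-1}(r)}|\varphi^i(\first)|\simeq w_n(r)$. Since $w_n(r)=|\varphi^{w_{n-1}(r)}(\first)|$ by definition, this amounts to the claim $\sum_{i=0}^{N}|\varphi^i(\first)|\simeq |\varphi^N(\first)|$ with $N=w_{n-1}(r)$, which is precisely the displayed ``approximately geometric'' observation: the ratio $|\varphi^{k+1}(\first)|/|\varphi^{k}(\first)|$ tends to $1+\sqrt{2}>1$, so the partial sums of the series $\sum|\varphi^k(\first)|$ are bounded above and below by fixed multiples of the largest term. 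Concretely, one lower bound is trivial ($|\varphi^N(\first)|$ is one summand), and for the upper bound one picks $K$ with $|\varphi^{k+1}(\first)|/|\varphi^{k}(\first)|\geq 2$ for all $k\geq K$ (possible since the limit exceeds $2$), splits the sum at $K$, bounds the tail $\sum_{i=K}^{N}|\varphi^i(\first)|$ by $2|\varphi^N(\first)|$ via the geometric comparison, and absorbs the finite initial segment into the constant $C$ of the relation $\preceq$. This gives $\sum_{i=0}^{N}|\varphi^i(\first)|\simeq|\varphi^N(\first)|$ uniformly in $N$, hence the first $\simeq$.

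Next I would prove $w_n(r)\simeq\exp^n(r)$ by induction on $n$. The base case $n=0$ is the identity $w_0(r)=r=\exp^0(r)$. For the inductive step, suppose $w_{n-1}(r)\simeq\exp^{n-1}(r)$; I want $|\varphi^{w_{n-1}(r)}(\first)|\simeq\exp^n(r)$. From the closed form, $|\varphi^N(\first)|\simeq(1+\sqrt{2})^N$, and since $(1+\sqrt 2)^N\simeq 2^N\simeq e^N$ up to coarse Lipschitz equivalence (a change of exponential base is a linear reparametrization of the exponent, which $\simeq$ absorbs), we get $|\varphi^N(\first)|\simeq\exp(N)$. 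Substituting $N=w_{n-1}(r)$ and composing with the inductive hypothesis yields $w_n(r)\simeq\exp(w_{n-1}(r))\simeq\exp(\exp^{n-1}(r))=\exp^n(r)$. The one point requiring a little care here is that $\simeq$ is not in general preserved under composition on the inside of an arbitrary function, so I would note explicitly that $\exp$ (equivalently $t\mapsto e^{t}$) is monotone and satisfies the mild regularity $\exp(Ct+C)\preceq \exp(t)^{C'}\cdot(\text{const})\simeq\exp(t)$-type bounds that make $f\simeq g\implies \exp\circ f\simeq \exp\circ g$ valid; this is a standard fact about towers of exponentials and is exactly why the statement is phrased with $\exp^n$.

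The main obstacle is bookkeeping rather than mathematics: one must be careful that the coarse Lipschitz equivalences compose correctly through the tower, i.e. that at each level the multiplicative and additive constants produced by $\preceq$ at the previous level are swallowed by the exponential at the next level without the constants blowing up in $n$ (they do not, because $n$ is fixed throughout and each $\simeq$ introduces only finitely many constants). I would therefore present the induction cleanly, isolating the single analytic input (the geometric-series estimate, giving $\sum_{k=0}^{N}|\varphi^k(\first)|\simeq|\varphi^N(\first)|\simeq e^{N}$) and then treating the passage up the tower as a formal consequence of the closure of $\simeq$ under post-composition with $\exp$. Both displayed equivalences then follow, completing the proof of Lemma~\ref{lem:growth}.
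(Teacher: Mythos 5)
Your proposal is correct and follows the same route the paper takes: the entire content of the paper's argument is the preceding discussion (the eigenvalue estimate for $|\varphi^N(\first)|$, the ratio-limit $1+\sqrt 2$, and the approximately geometric series giving $\sum_{k=0}^N|\varphi^k(\first)|\simeq|\varphi^N(\first)|$), and the paper simply asserts the lemma ``from this discussion'' without writing out the induction on $n$. You have supplied that induction explicitly, together with the observation that $\simeq$ is preserved under post-composition by $\exp$ when the inner function is monotone, superadditive, and at least linear (true of $\exp^{n-1}$), which is precisely the point the paper leaves implicit.
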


\subsection{The van Kampen diagrams for moves}\label{sec:moves}

We now describe the van Kampen diagrams that will make up the surface of our spheres, and we explain how they correspond to the schematic diagrams and the subdivision-combination moves.

Given two words $W_1=b_1b_2\cdots b_m$ and $W_2=c_1c_2\cdots c_m$, we let $\delta(W_1,W_2)$ denote the word $b_1c_1b_2c_2\cdots b_mc_m$. With this notation, we define $\ddii{ij}{n}{w_k(r)}$, for $k\geq 1$, $n\geq 0$, and $1\leq i,j\leq 2^n$, to be the van Kampen diagram in $\widetilde{K_{H_n}}$ shown in Figure~\ref{fig:deltadehn}.
\begin{figure}[h]
\begin{center}
\psfrag{b}{\footnotesize{$\delta\Big(\varphi^{w_{k-1}(r)}(a_{ni1}^{-1}),\varphi^{w_{k-1}(r)}(u_{(n-1)1})\Big)$}}
\psfrag{d}{\footnotesize{$\delta\Big(\varphi^{w_{k-1}(r)}(a_{nj1}^{-1}),\varphi^{w_{k-1}(r)}(u_{(n-1)1})\Big)$}}
\psfrag{i}{\footnotesize{$\varphi^{w_{k-1}(r)}(a_{ni1})$}}
\psfrag{j}{\footnotesize{$\varphi^{w_{k-1}(r)}(a_{nj1})$}}
\psfrag{u}{\footnotesize{$\varphi^{w_{k-1}(r)}\big(u_{(n-1)1}\big)$}}
\includegraphics[width=3.5in]{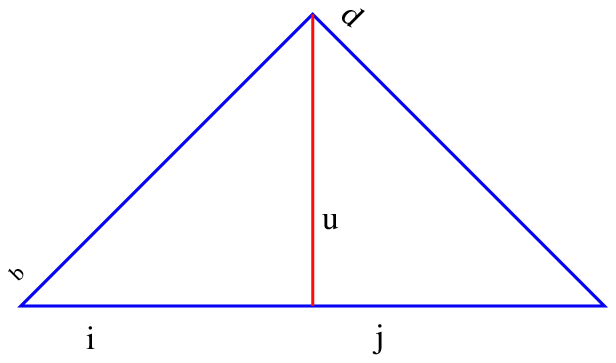}
\end{center}
\caption{The van Kampen diagram $\ddii{ij}{n}{w_k(r)}$. \label{fig:deltadehn}}
\end{figure}

Such a van Kampen diagram will be found in the initial stage of a Type II move. The following lemma establishes the existence and geometry of such van Kampen diagrams.
\begin{lemma}\label{lem:deltadehn}
The van Kampen diagram $\ddii{ij}{n}{w_k(r)}$ exists. Moreover, $\mathrm{Area}(\ddii{ij}{n}{w_k(r)})\simeq [w_k(r)]^2$.
\end{lemma}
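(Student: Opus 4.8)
The plan is to analyze the van Kampen diagram $\ddii{ij}{n}{w_k(r)}$ directly: it is the "shuffled-commutator triangle" recording how the word $\delta\big(\varphi^{w_{k-1}(r)}(a_{ni1}^{-1}),\varphi^{w_{k-1}(r)}(u_{(n-1)1})\big)$ (a letter-by-letter shuffle of two words each of length $N:=|\varphi^{w_{k-1}(r)}(\first)|$) can be converted, using only the commutation relations $[a_{ni1},u_{(n-1)1}]=1$ available in the edge group $\langle\stable ni\rangle\rtimes_\theta\langle {\bf u}_{n-1},\dots\rangle$, into the unshuffled product $\varphi^{w_{k-1}(r)}(a_{ni1}^{-N'})\cdot\varphi^{w_{k-1}(r)}(u_{(n-1)1}^{N'})$ for the appropriate power, and similarly on the $j$-side. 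By Lemma~\ref{lem:growth} we have $N=|\varphi^{w_{k-1}(r)}(\first)|\simeq w_k(r)$, so it suffices to show the diagram exists and has area $\simeq N^2$.

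First I would establish existence: the boundary word of Figure~\ref{fig:deltadehn} must be verified to be trivial in $\pi_1(K_{H_n})$. This follows from the structure of the edge groups established in Lemma~\ref{lem:edge-str} --- both $a_{ni1}$ and $u_{(n-1)1}$ lie in an explicit subgroup isomorphic to $F_2\rtimes_\theta F_4$, the relevant generators commute, and the diagram is literally the standard van Kampen diagram for the identity "shuffle of two commuting words equals the concatenation of those words." Here is where the palindromic hypothesis on $\varphi$ (Definition~\ref{def:phi}) enters: applying $\varphi^{w_{k-1}(r)}$ to $a_{ni1}$ and to $u_{(n-1)1}$ and shuffling letterwise only produces a word that closes up combinatorially into the claimed triangle because $\varphi$ being palindromic makes $\varphi^m(\first)$ a palindrome, so the forward and backward readings match up as required by Figure~\ref{fig:deltadehn}. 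I would spell out that the $2$-cells of the diagram are exactly the commutation squares $[\varphi^{w_{k-1}(r)}(a_{ni1}),\varphi^{w_{k-1}(r)}(u_{(n-1)1})]=1$ expanded letter-by-letter --- but note these are squares between \emph{single} letters of the $\varphi$-images, which do commute since each letter of $\varphi^m(\first)$ is $\first$ or $\second$, and (after passing to the image in $H_n$) the generators $\stable ni$ commute with ${\bf u}_{n-1}$ coordinatewise.

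For the area bound: the upper bound $\mathrm{Area}\preceq N^2$ is immediate, since a shuffle of two words of length $N$ can be unshuffled using at most $\binom{N}{2}\sim N^2/2$ adjacent transpositions, each realized by one commutation $2$-cell. The lower bound $\mathrm{Area}\succeq N^2$ is the standard "commutator has quadratic area in $\Z^2$" argument, which I would run via the embedding/projection trick: the subcomplex of $\widetilde{K_{H_n}}$ carrying these cells retracts onto the $\Z^2$ (or $F_2\times F_2$) corresponding to $\langle a_{ni1}\rangle\times\langle u_{(n-1)1}\rangle$, under which the boundary word of $\ddii{ij}{n}{w_k(r)}$ maps to a word enclosing an $N\times N$ region; since each $2$-cell of our diagram maps to at most a bounded number of unit squares, the number of $2$-cells is $\succeq N^2$. (Alternatively one invokes the abelianized-area/Magnus-embedding lower bound for shuffles.) Since $N=|\varphi^{w_{k-1}(r)}(\first)|\simeq w_k(r)$ by Lemma~\ref{lem:growth}, both bounds combine to give $\mathrm{Area}(\ddii{ij}{n}{w_k(r)})\simeq [w_k(r)]^2$.

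The main obstacle I anticipate is bookkeeping rather than conceptual: carefully matching the combinatorial boundary of Figure~\ref{fig:deltadehn} with an actual reduced word representing the identity, keeping straight which occurrences of $\first$ versus $\second$ appear in $\varphi^{w_{k-1}(r)}(a_{ni1})$ and correctly using palindromicity so that the "top," "bottom," and two "slanted" sides of the triangle fit together without leftover letters. Once the boundary is correctly identified, the area estimates are routine applications of the quadratic lower bound for commutators/shuffles together with Lemma~\ref{lem:growth}.
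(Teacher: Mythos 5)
Your approach to existence is essentially the paper's: use palindromicity of $\varphi$ to write $\varphi^{w_{k-1}(r)}(a_{ni1})$ and $\varphi^{w_{k-1}(r)}(u_{(n-1)1})$ as palindromes, then push letters past each other via the commutation relations $[a_{nij},u_{(n-1)j}]=1$ to show the shuffled word equals the unshuffled product; the paper spells this out explicitly as $\varphi^{w_{k-1}(r)}(a_{ni1}^{-1})\varphi^{w_{k-1}(r)}(u_{(n-1)1}) = \delta\bigl(\varphi^{w_{k-1}(r)}(a_{ni1}^{-1}),\varphi^{w_{k-1}(r)}(u_{(n-1)1})\bigr)$.

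Where you diverge is the area computation, and here your framing misses the point slightly. The symbol $\ddii{ij}{n}{w_k(r)}$ denotes a \emph{specific} van Kampen diagram (the one drawn in Figure~\ref{fig:deltadehn}), not a minimal-area filling of its boundary word. Its area is therefore a literal count of $2$-cells in that diagram, not something requiring a two-sided estimate. Your ``upper bound'' computation --- counting the adjacent transpositions needed to unshuffle --- is in fact that exact count; the paper records it as $\mathrm{Area}(\ddii{ij}{n}{w_k(r)})=2(2p+1)^2=2|\varphi^{w_{k-1}(r)}(\first)|^2$, where $2p+1 = |\varphi^{w_{k-1}(r)}(\first)|$ and the factor of $2$ comes from the diagonal subdivision of the commutation squares in the cell structure on $K_{H_n}$. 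The retraction-onto-$\Z^2$ (or $F_2\times F_2$) lower bound you propose is not wrong, but it proves a stronger and different statement --- that the \emph{filling area} of the boundary word is $\succeq N^2$ --- which is not what this lemma asserts and is not used here. (The connection between the area of these explicit diagrams and genuine filling volume is made later and separately, via the embeddedness arguments of Proposition~\ref{prop:lower} together with Remark~\ref{rem:embedded}.) So: same existence argument, but you should replace ``upper bound plus lower bound'' with a direct count of commutation $2$-cells, which makes the lower-bound machinery unnecessary.
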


\begin{proof}
Because $\varphi$ is palindromic, we may write
$$
\varphi^{w_{k-1}(r)}(a_{ni1})=b_1\cdots b_{p} b_{p+1}b_{p}\cdots b_1,
$$
where for $j\neq p+1$ we have $b_j\in\{a_{ni1}^{\pm},a_{ni2}^{\pm}\}$, and $b_{p+1}\in\{a_{ni1}^{\pm},a_{ni2}^{\pm},e\}$.
We similarly write
$$
\varphi^{w_{k-1}(r)}(u_{(n-1)1})=c_1\cdots c_{p}c_{p+1}c_{p}\cdots c_1.
$$
Recall from Table~\ref{groups-table} that $a_{nij}$ commutes with $u_{(n-1)j}$ for 
$1 \leq i \leq 2^n$ and $j=1,2$.  
Then
\begin{align*}
\varphi^{w_{k-1}(r)}(a_{ni1}^{-1})\varphi^{w_{k-1}(r)}&(u_{(n-1)1}) \\
&= (b_1\cdots b_{p} b_{p+1}b_{p}\cdots b_1)^{-1}(c_1\cdots c_{p}c_{p+1}c_{p}\cdots c_1) \\
&= 
(b_1^{-1}c_1)\cdots (b_p^{-1}c_p)(b_{p+1}^{-1}c_{p+1})(b_p^{-1} c_p)\cdots (b_1^{-1} c_1) \\
&=\delta(\varphi^{w_{k-1}(r)}(a_{ni1}^{-1}),\varphi^{w_{k-1}(r)}(u_{(n-1)1})).
\end{align*}

The area claim follows by counting the number of commutation relations involved in this equality, remembering that each such relation provides two 2-cells due to the diagonal subdivision in the cell structure. We obtain
$$
\mathrm{Area}(\ddii{ij}{n}{w_k(r)})=2\left(\sum_{j=1}^{2p} j+(2p+1)+\sum_{j=1}^{2p}j\right)
$$
$$
=2(2p+1)^2=2|\varphi^{w_{k-1}(r)}(g_1)|^2\simeq [w_k(r)]^2.
$$
\end{proof}

\begin{remark}\label{rem:actions}
The reason $\varphi$ is chosen to be palindromic is to
ensure the existence of such van Kampen diagrams. 
To understand the mechanics behind $\ddii{ij}{n}{N}$, 
consider the subgroup $\langle g_1^{-1}h_1,g_2^{-1}h_2\rangle\cong F_2$ of the group $\langle g_1,g_2\rangle\times\langle h_1,h_2\rangle\cong F_2\times F_2$.  
The action of $\varphi \times \varphi$ on $F_2 \times F_2$ restricts to an automorphism of this subgroup exactly when $\varphi$ is palindromic, and in this case the induced action is the same as the action of $\varphi$ coming from the isomorphism 
$\langle g_1^{-1}h_1,g_2^{-1}h_2\rangle \simeq F_2$.  Note that the diagonal sides of $\ddii{ij}{n}{w_k(r)}$ are labeled by the generators of such subgroups, and are in fact of the form 
$\varphi^{{w_{k-1}(r)}}(g_1^{-1}h_1)$.  This fact is crucial for performing future generations of the iterative procedure. 
\end{remark}

When $k=0$ we define $\ddii{ij}{n}{w_0(r)}$ to be the van Kampen diagram obtained from the one in Figure~\ref{fig:deltadehn} by replacing all labels of the form $\varphi^{w_{k-1}(r)}(g)$ by $g^r$. Also the notation $\ddii{12}{0}{w_k(r)}$ will make sense with the definition above if we define ${\bf{u}}_{-1}={\bf{a}}_{01}{\bf{a}}_{02}^{-1}$. For then we have $\delta(\varphi^{w_{k-1}(r)}(a_{011}^{-1}),\varphi^{w_{k-1}(r)}(a_{011}a_{021}^{-1}))=\varphi^{w_{k-1}(r)}(a_{021}^{-1})=[\varphi^{w_{k-1}(r)}(a_{021})]^{-1}$. Each of these diagrams has geometry identical to that of the others, up to $\simeq$-equivalence. See Figure~\ref{fig:deltadehn1}.
\begin{figure}[ht]
\begin{center}
\psfrag{3}{\tiny{$\varphi^{w_{k-1}(r)}(a_{011}^{-1}a_{021})$}}
\psfrag{1}{\tiny{$\varphi^{w_{k-1}(r)}(a_{011})$}}
\psfrag{2}{\tiny{$\varphi^{w_{k-1}(r)}(a_{021})$}}
\psfrag{4}{\tiny{$a_{021}$}}
\psfrag{5}{\tiny{$a_{022}$}}
\psfrag{6}{\tiny{$a_{011}$}}
\psfrag{7}{\tiny{$a_{012}$}}
\includegraphics[width=120mm]{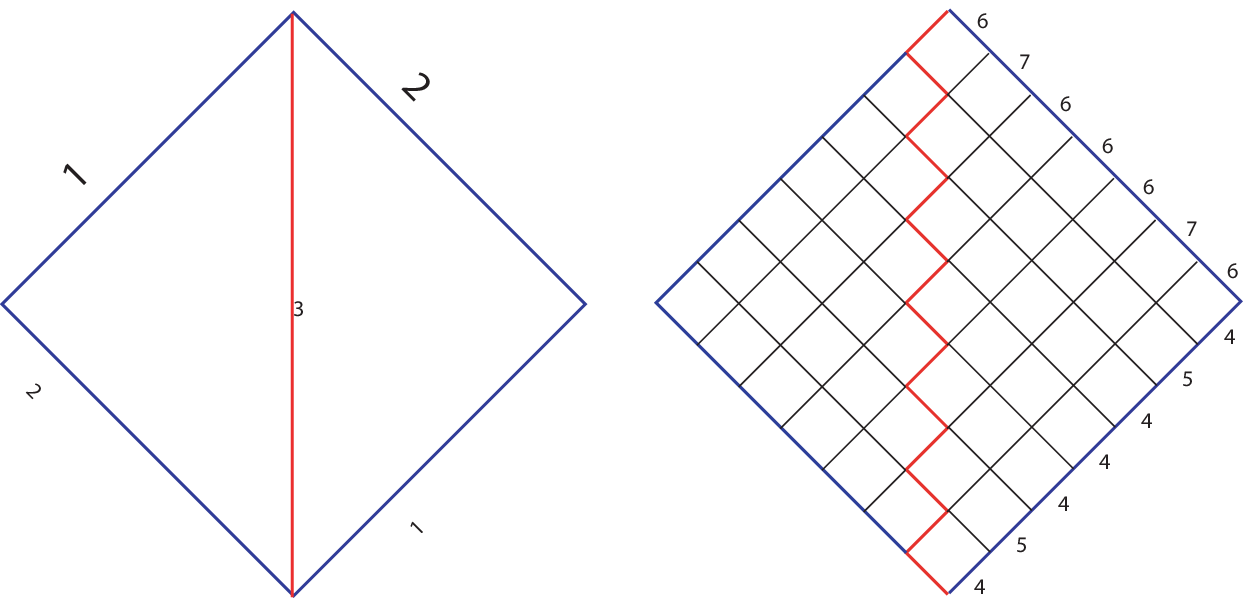}
\end{center}
\caption{The van Kampen diagrams $\ddii{12}{0}{w_k(r)}$ and $\ddii{12}{0}{w_0(2)}$. \label{fig:deltadehn1}}
\end{figure}

For the other van Kampen diagram with which we will be concerned, we first recall that $\mathcal{L}_n(i)$ denotes the $i$th element of the list $\mathcal{L}_n$. This element is a 2-vector $\langle \first,\second \rangle$. It will be useful in what follows to let $\mathcal{L}_n^{(1)}(i)$ denote the first element $\first$ of this vector. With this notation we define $\ddi{i}{n}{w_k(r)}$, for $k \geq 1$, $n\geq 0$, and $1\leq i\leq 2^n$, to be the van Kampen diagram in $\widetilde{K_{G_n}}$ shown in Figure~\ref{fig:thetadehn}. (Recall that each vector in the list $\mathcal L_{n+2}$ consists of elements in $G_n$.)
\begin{figure}[h]
\begin{center}
\psfrag{a}{\footnotesize{$\mathcal{L}_{n+2}^{(1)}(i+2^{n+1})$}}
\psfrag{y}{\footnotesize{$\mathcal{L}_{n+2}^{(1)}(i)$}}
\psfrag{u}{\footnotesize{$\varphi^{w_{k-1}(r)}(u_{n1})$}}
\psfrag{d}{\footnotesize{$\varphi^{w_k(r)}\big(\mathcal{L}_{n+2}^{(1)}(i+2^{n+1})\big)$}}
\psfrag{A}{\footnotesize{$a_{121}$}}
\psfrag{U}{\footnotesize{$u_{11}$}}
\psfrag{V}{\footnotesize{$u_{12}$}}
\psfrag{Y}{\footnotesize{$y_{1}$}}
\includegraphics[width=120mm]{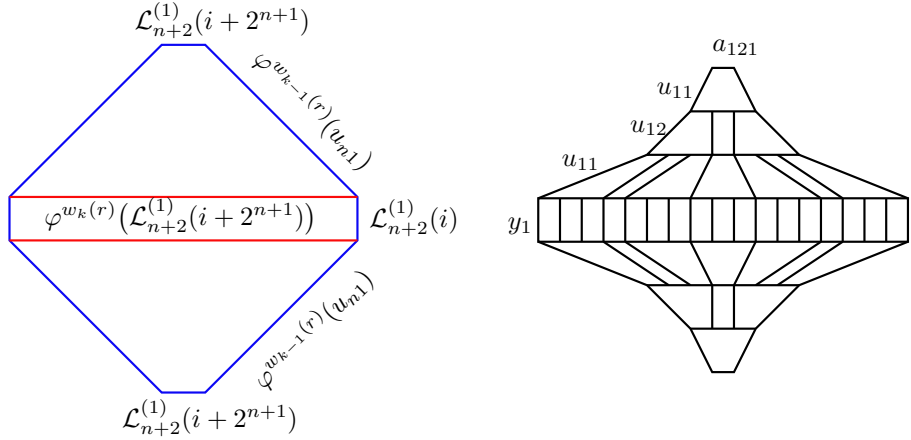}
\end{center}
\caption{The van Kampen diagrams $\ddi{i}{n}{w_k(r)}$ and $\ddi{2}{1}{w_0(1)}$. \label{fig:thetadehn}}
\end{figure}
When $k=0$ we define $\ddi{i}{n}{w_0(r)}$ to be the van Kampen diagram obtained from that in Figure~\ref{fig:thetadehn} by replacing all labels of the form $\varphi^{w_{k-1}(r)}(g)$ by $g^r$.

We establish the existence and geometry of such diagrams in the following lemma.
\begin{lemma}\label{lem:thetadehn}
The van Kampen diagram $\ddi{i}{n}{w_k(r)}$ exists. Moreover, $\mathrm{Area}(\ddi{i}{n}{w_k(r)})\simeq w_{k+1}(r)$.
\end{lemma}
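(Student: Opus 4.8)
The plan is to parallel the proof of Lemma~\ref{lem:deltadehn}, exploiting the palindromic nature of $\varphi$. First I would unpack what the van Kampen diagram $\ddi{i}{n}{w_k(r)}$ in Figure~\ref{fig:thetadehn} is asserting: it is a trapezoidal diagram in $\widetilde{K_{G_n}}$ whose top edge is labeled $\mathcal{L}_{n+2}^{(1)}(i)$ (or, more precisely, a single-generator edge), whose bottom edge is labeled $\varphi^{w_k(r)}(\mathcal{L}_{n+2}^{(1)}(i+2^{n+1}))$, and whose two vertical sides are labeled by words of the form $\varphi^{w_{k-1}(r)}(u_{n1})$. The relevant algebraic fact is that in $G_n$ the stable letter $u_{n1}$ (a coordinate of $\mathbf u_n$) conjugates $\mathcal{L}_{n+2}^{(1)}(i)$-type generators to their images under $\varphi$, so that iterating $w_{k-1}(r)$ times and then applying the structure of $\varphi$ one more ``outer layer'' of iteration realizes $\varphi^{w_k(r)}(\cdot)$ along the bottom. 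I would first state this conjugation relation carefully using Lemma~\ref{lem:edge-str}(2)--(3), which guarantees that the relevant subgroup is $F_2\rtimes_\theta F_4$ and hence that $\mathbf u_n$ acts by $\varphi$ on the relevant $F_2$.

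Next I would establish existence of the diagram. The trapezoid is built as a vertical stack of horizontal strips, the $j$th strip (for $1\le j\le w_{k-1}(r)$) being a van Kampen diagram for the equation $u_{n1}\,\varphi^{j-1}(g)\,u_{n1}^{-1}=\varphi^{j}(g)$, where $g=\mathcal{L}_{n+2}^{(1)}(i+2^{n+1})$ or the appropriate diagonal generator. Each such strip exists because it is exactly a ``$\varphi$-rule'' region: it records how $\varphi$ rewrites a word, and the area of the $j$th strip is on the order of $|\varphi^{j}(\first)|$. Stacking these gives a diagram whose bottom boundary is $\varphi^{w_{k-1}(r)}(g)$; one more application (to get from $w_{k-1}(r)$ iterations to $w_k(r)=|\varphi^{w_{k-1}(r)}(\first)|$) is a matter of bookkeeping with the definition of the $w_n$ and possibly a base-case adjustment analogous to the $k=0$ convention and the ${\bf u}_{-1}={\bf a}_{01}{\bf a}_{02}^{-1}$ trick used after Figure~\ref{fig:deltadehn1}. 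The diagonal-edge issue from Remark~\ref{rem:actions} must be handled here: because $\varphi$ is palindromic, $\varphi\times\varphi$ restricts to an automorphism of the diagonal subgroup $\langle g_1^{-1}h_1,\dots\rangle$, so the diagonal labels genuinely have the form $\varphi^{w_{k-1}(r)}(g_1^{-1}h_1)$, and the strips close up.

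For the area estimate, I would count cells strip by strip. The total area is (up to the factor of $2$ coming from the diagonal subdivision of the cell structure described in Section~\ref{sec:cells}) a sum $\sum_{j}(\text{length of }j\text{th strip})$, and the length of the $j$th strip is comparable to $|\varphi^{j}(\first)|$ plus $|\varphi^{j-1}(\first)|$. Hence $\mathrm{Area}(\ddi{i}{n}{w_k(r)})\simeq\sum_{j=0}^{w_{k-1}(r)}|\varphi^{j}(\first)|$, which by Lemma~\ref{lem:growth} (the geometric-series estimate) is $\simeq|\varphi^{w_{k-1}(r)}(\first)|=w_k(r)$. Wait --- the claim is $\simeq w_{k+1}(r)$, not $w_k(r)$, so I must be more careful about which index of iteration lives where: the bottom edge is $\varphi^{w_k(r)}(\cdot)$, so the tallest strips have length $\simeq|\varphi^{w_k(r)}(\first)|=w_{k+1}(r)$, and the dominant term of the geometric sum $\sum_{j=0}^{w_k(r)}|\varphi^j(\first)|$ is $\simeq w_{k+1}(r)$. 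So the strip index actually runs up to $w_k(r)$ (the exponent appearing on the bottom boundary), and Lemma~\ref{lem:growth} gives $\sum_{j=0}^{w_k(r)}|\varphi^j(\first)|\simeq w_{k+1}(r)$, as required.

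The main obstacle I anticipate is bookkeeping, not conceptual: precisely matching the boundary labels in Figure~\ref{fig:thetadehn} to the conjugation action supplied by Lemma~\ref{lem:edge-str}, making sure the ``outer'' iteration index increments correctly (the $w_{k-1}$ appearing on the vertical sides versus the $w_k$ on the bottom), and checking the degenerate cases $k=0$ and small $n$ (where $\mathcal{L}$ involves the $\mathbf u_{-1}$ convention) against the stated formula. The palindromic property is doing essential work and must be invoked exactly as in Remark~\ref{rem:actions} so that the diagonal edges are well-defined; I would make that invocation explicit rather than leave it implicit.
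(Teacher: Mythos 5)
Your approach is essentially the same as the paper's: build the trapezoid as a vertical stack of $\varphi$-rewrite strips, note that the area of each strip equals the length of the word $\varphi$ is applied to, and sum the resulting geometric series via Lemma~\ref{lem:growth} to get $\simeq w_{k+1}(r)$. Your self-correction of the strip index (the sides $\varphi^{w_{k-1}(r)}(u_{n1})$ have \emph{length} $w_k(r)$, so $\varphi$ is applied $w_k(r)$ times and the sum runs to $w_k(r)$) is exactly the point, and your explicit invocation of the palindromic property via Remark~\ref{rem:actions} --- needed so that $\mathbf{u}_n$ acts by $\varphi$ on the diagonal generators $\mathbf{u}_{n-1}^{-1}\mathbf{a}_{nj}$ appearing in $\mathcal{L}_{n+2}$ --- is appropriate, if a bit more verbose than the paper, which just cites ``each of $u_{n1}$ and $u_{n2}$ acts on $\mathcal{L}_{n+2}^{(1)}(i+2^{n+1})$ via $\varphi$.''

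The one genuine omission: you describe $\ddi{i}{n}{w_k(r)}$ as a single trapezoid with top edge $\mathcal{L}_{n+2}^{(1)}(i)$, but the diagram actually consists of \emph{two} trapezoids (each with top edge the single generator $\mathcal{L}_{n+2}^{(1)}(i+2^{n+1})$) joined along a central horizontal strip labeled by $\mathcal{L}_{n+2}^{(1)}(i)$ --- this is the ``belted-trapezoid pair'' from Section~\ref{somespheres}. Constructing that central strip requires verifying the commutation $[\mathcal{L}_{n+2}^{(1)}(i),\ \mathcal{L}_{n+2}(i+2^{n+1})]=1$ in $G_n$, which the paper gets by noting that the two families sit as the first and third factors of the same edge group $F_2\rtimes_\theta F_4$ for $H_{n+1}$. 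Your outline does not address this step, and without it the two trapezoids cannot be glued along their long sides. The fix is a one-sentence appeal to the structure of the $\theta$-action, but it does need to be said; all your area estimates survive unchanged once the strip is in place (its area is $\simeq w_{k+1}(r)$ as well, so it doesn't change the order of the total).
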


For the intuition behind this result, see $\ddi{2}{1}{w_0(1)}$ in Figure~\ref{fig:thetadehn}. The crucial ingredient for the area claim is the exponential growth of $\varphi$.

\begin{proof}
To establish the existence of the van Kampen diagram, note that 
each of $u_{n1}$ and $u_{n2}$ acts on 
$\mathcal{L}_{n+2}^{(1)}(i+2^{n+1})$ via $\varphi$.
Thus as elements of $G_n$, the word 
$\varphi^{w_{k-1}(r)}(u_{n1}) \mathcal{L}_{n+2}^{(1)}(i+2^{n+1}) \varphi^{w_{k-1}(r)}(u_{n1}^{-1})$  and the word labeling the long sides of the central strip in $\ddi{i}{n}{w_k(r)}$ are equal.  This establishes the top and bottom trapezoids
 of $\ddi{i}{n}{w_k(r)}$.

For the middle strip, we simply need to observe that $\mathcal{L}_{n+2}^{(1)}(i)$ commutes with $\mathcal{L}_{n+2}(i+2^{n+1})$.  To see this, note that each $\mathcal{L}_{n+2}(i+2^{n+1})$ term is also the first factor in an edge group for $H_{n+1}$, while each $\mathcal{L}_{n+2}^{(1)}(i)$ term is the third factor in the corresponding group. These commute, by definition of the action of $\theta$.

We now compute the area of $\ddi{i}{n}{w_k(r)}$. 
Note that the area obtained when $\varphi$ acts on a positive word $W$ is exactly $|W|$. 
Since $\varphi$ is applied 
$|\varphi^{w_{k-1}(r)}(\first)| \simeq w_k(r)$ times in each trapezoid, its
area is
$$
1+|\varphi(\first)|+|\varphi^2(\first)|+\cdots+|\varphi^{w_{k}(r)-1}(\first)|,
$$
so that $\mathrm{Area}(\ddi{i}{n}{w_k(r)})\simeq |\varphi^{w_k(r)}(\first)|=w_{k+1}(r)$, as required.
\end{proof}

The geometry of the van Kampen diagrams can be summarized as shown in 
Figure~\ref{fig:summary}, where the labels on the edges indicate lengths.
\begin{figure}[ht]
\begin{center}
\psfrag{0}{\footnotesize{$w_{k}(r)$}}
\psfrag{1}{\footnotesize{$w_{k+1}(r)$}}
\psfrag{A}{\footnotesize{Area$\,\simeq [w_{k}(r)]^2$}}
\psfrag{B}{\footnotesize{Area$\,\simeq w_{k+1}(r)$}}
\includegraphics[width=3.5in]{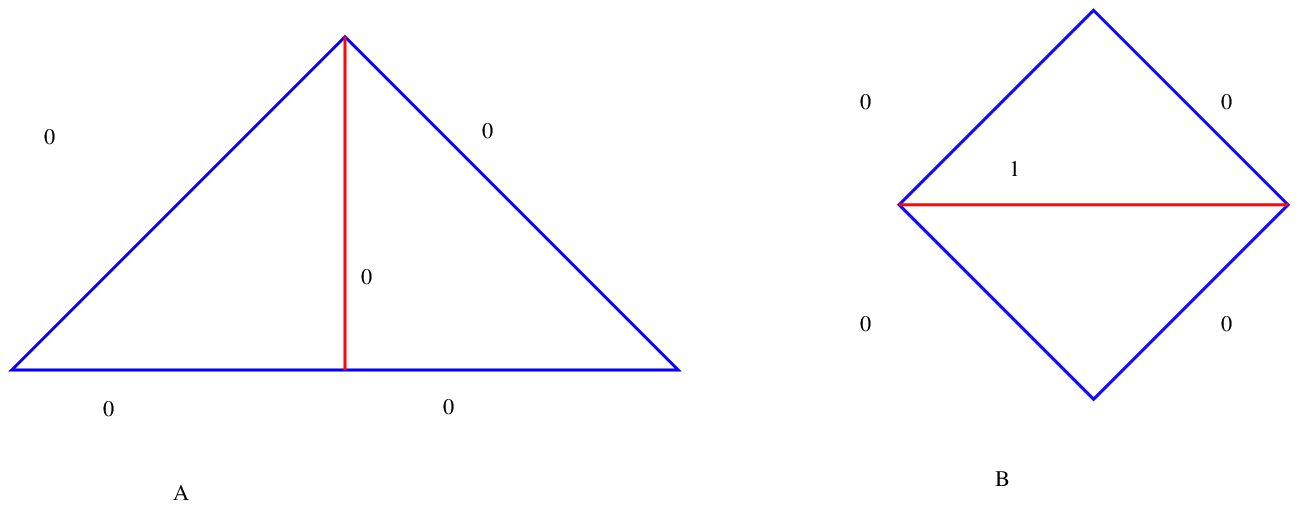}
\end{center}
\caption{The geometry of $\ddii{ij}{n}{w_k(r)}$ and $\ddi{i}{n}{w_k(r)}$. \label{fig:summary}}
\end{figure}

\subsection{Geometry of the moves}

\subsubsection{Type II}

As indicated above, the geometry of the van Kampen diagram $\ddii{ij}{n}{w_k(r)}$ is that of the schematic quadrilateral with which a Type II move begins, in that each side has equivalent length, and the area is equivalent to the square of this length. We now explain how such a move is performed in the group $G_n$. Note that $\ddii{ij}{n}{w_k(r)}$ is contained in the edge group for $G_n$. As such it admits an action by the stable letters ${\bf u}_n$.
\begin{lemma}\label{lem:typeii}
Assume $k\geq 1$, and suppose $W$ is a positive word in the $u_{nj}$ of length $w_{k-1}(r)$, and let $B$ denote the abstract 3-ball obtained by allowing $W^{-1}$ to act on $\ddii{ij}{n}{w_k(r)}$. Let $S$ denote the 2-sphere boundary of $B$. Then
$$
\mathrm{Vol}(B)\succeq w_{k}^2(r)
$$
and
$$
\mathrm{Area}(S-\ddii{ij}{n}{w_k(r)})\simeq w_{k}(r).
$$
\end{lemma}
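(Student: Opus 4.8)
The plan is to build the $3$-ball $B$ explicitly as a stack of parallel copies of $\ddii{ij}{n}{w_k(r)}$, one for each letter of $W^{-1}$, and then count cells. First I would recall that $\ddii{ij}{n}{w_k(r)}$ lies in the edge group of $G_n$ (by Lemma~\ref{lem:edge-str}(2)--(3) the relevant subgroups are of the form $F_2\rtimes_\theta F_4$, with the $F_2$-factor generated by $\stable{n}{i}$ or $\mathbf u_{n-1}^{-1}\stable{n}{i}$), so that each stable letter $u_{nj}$ acts on this diagram via $\varphi$ on the $F_2$-coordinates appearing on the vertical edges of the two trapezoids, and via $\varphi$ on the $\mathcal L_{n+2}$-label on the middle strip as well — which is exactly the data encoded in $\ddi{i}{n}{\,\cdot\,}$. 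Concretely, $B$ is the mapping-cylinder-type object $\ddii{ij}{n}{w_k(r)}\times[0,1]$ with the product cell structure, subdivided at the $\times 1$ end by pulling back along the action of $W^{-1}$, as in the model situation of Section~\ref{sec:cells}; this is an embedded $3$-ball in $\widetilde{K_{G_n}}$, and its boundary $2$-sphere $S$ consists of the original copy $\ddii{ij}{n}{w_k(r)}$ on the bottom, the image copy $W^{-1}\cdot\ddii{ij}{n}{w_k(r)}$ on the top, and a lateral annular region swept out by the boundary circle of $\ddii{ij}{n}{w_k(r)}$.

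Next I would estimate $\mathrm{Vol}(B)$ from below. The $3$-ball $B$ contains, as a sub-slab, a product of the middle strip of $\ddii{ij}{n}{w_k(r)}$ with the interval, or more simply one sees that $B$ contains a copy of a slab of the form $\varphi^{?}(\cdot)\times\varphi^{?}(\cdot)\times(\text{edge})$ whose $3$-volume is $[w_k(r)]^2$, by the same reasoning as in the ``Spheres in $G_0$'' discussion and Lemma~\ref{lem:slab} (referenced forward): allowing a positive word of length $w_{k-1}(r)$ in the $u_{nj}$ to act sweeps the $w_k(r)\times w_k(r)$-area diagram $\ddii{ij}{n}{w_k(r)}$ through $w_{k-1}(r)$-many layers, but more to the point the front and back faces together with the lateral faces bound a region containing a combinatorial slab of dimensions $w_k(r)\times w_k(r)\times 1$ (roughly), giving $\mathrm{Vol}(B)\succeq [w_k(r)]^2 = w_k^2(r)$. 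I should be a little careful here: the cleanest argument is to exhibit inside $B$ an embedded sub-$3$-ball that is literally a product slab and invoke Remark~\ref{rem:embedded} together with Lemma~\ref{lem:slab}; the volume of that slab is $|\varphi^{w_{k-1}(r)}(\first)|^2\simeq w_k(r)^2$.

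For the area of the new part of the sphere, $\mathrm{Area}(S-\ddii{ij}{n}{w_k(r)})$, I would decompose $S$ minus the bottom face into the top face $W^{-1}\cdot\ddii{ij}{n}{w_k(r)}$ and the lateral annulus. The lateral annulus is tiled by (subdivided) squares coming from $(\text{edge of }\partial\ddii{ij}{n}{w_k(r)})\times[0,1]$; since $\partial\ddii{ij}{n}{w_k(r)}$ has combinatorial length $\simeq w_k(r)$ and the interval direction has $w_{k-1}(r)$ subdivisions, naively this is $\simeq w_k(r)\,w_{k-1}(r)$, but the point is that along the two ``$\mathbf u$-direction'' sides of $\ddii{ij}{n}{w_k(r)}$ — the ones labeled by powers of $u_{(n-1)1}$ — the action of $W^{-1}$ on the corresponding strip is precisely the diagram $\ddi{i}{n}{w_k(r)}$ (a trapezoid), whose area is $\simeq w_{k+1}(r)$; wait — that is too big. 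Let me reconsider: the correct reading is that the side-length-$w_k(r)$ edges are the ones fixed (up to the $\varphi$-shuffling handled by the $\delta$-notation) by the action, contributing a lateral region of area $\simeq w_k(r)$, while the top face, being $W^{-1}\cdot\ddii{ij}{n}{w_k(r)}$, has the same combinatorial type as $\ddii{ij}{n}{w_k(r)}$ but with the trapezoidal ``collapsing'' built in, and by Lemma~\ref{lem:thetadehn} the transition trapezoids each have area $\simeq w_k(r)$ (not $w_{k+1}$, because here the exponent being applied is $w_{k-1}(r)$, so the relevant geometric sum is $\sum_{i\le w_{k-1}(r)}|\varphi^i(\first)|\simeq w_k(r)$). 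Summing the $O(1)$-many trapezoidal and rectangular pieces gives $\mathrm{Area}(S-\ddii{ij}{n}{w_k(r)})\simeq w_k(r)$, and the matching lower bound is immediate since the top face alone contains at least one edge of length $\simeq w_k(r)$ realized by a strip of that area.

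The main obstacle I expect is bookkeeping the precise shape of $S-\ddii{ij}{n}{w_k(r)}$: one must verify that allowing $W^{-1}$ (length $w_{k-1}(r)$) to act does \emph{not} produce the larger $w_{k+1}(r)$-area trapezoids of $\ddi{i}{n}{w_k(r)}$ but rather pieces of area $\simeq w_k(r)$, which hinges on correctly identifying which edges of $\ddii{ij}{n}{w_k(r)}$ have length $\simeq w_k(r)$ versus $\simeq w_{k-1}(r)$ and tracking how $\varphi^{w_{k-1}(r)}$ acts on the $\delta(\cdot,\cdot)$-labels — this is exactly where the palindromic property of $\varphi$ (Remark~\ref{rem:actions}) and the fact that the diagonal edges are genuinely $\varphi^{w_{k-1}(r)}(g_1^{-1}h_1)$ are needed so that the action is again by $\varphi$ and the areas form an approximately geometric series summing to $\simeq w_k(r)$. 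Everything else — that $B$ is a ball, that it embeds, that $\mathrm{Vol}(B)\succeq w_k^2(r)$ via an internal product slab — is routine given Lemmas~\ref{lem:deltadehn}, \ref{lem:thetadehn}, \ref{lem:slab} and the model construction of Section~\ref{sec:cells}.
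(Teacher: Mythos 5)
Your area estimate lands in the right place, but your volume argument takes a genuinely wrong turn, and one of your "lower bound" remarks for the area is based on a misreading of the geometry.

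On the volume: you propose to exhibit inside $B$ an embedded product slab of the form $\varphi^{?}(\cdot)\times\varphi^{?}(\cdot)\times(\text{edge})$ and invoke Lemma~\ref{lem:slab} together with Remark~\ref{rem:embedded}. That does not work here. Lemma~\ref{lem:slab} concerns a slab sitting in $\widetilde{K_{H_0}}$, built from the base generators $a_{01i}, a_{02i}, y_i$; for general $n$ the diagram $\ddii{ij}{n}{w_k(r)}$ is built from $a_{nij}$ and $u_{(n-1)j}$ and does not lie in (a translate of) $\widetilde{K_{H_0}}$, so $B$ contains no such subcomplex. Remark~\ref{rem:embedded} is also the wrong tool: it relates the volume of an embedded ball to the filling volume of its boundary sphere, whereas the lemma asks only for a lower bound on $\mathrm{Vol}(B)$ of a prescribed abstract ball. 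The paper's argument is more elementary: $\ddii{ij}{n}{w_k(r)}$ has area exactly $2w_k(r)^2$, and in the \emph{bottom layer} of $B$ (between $\ddii{ij}{n}{w_k(r)}$ and its image under a single letter of $W^{-1}$) each $3$-cell covers at most nine $2$-cells of $\ddii{ij}{n}{w_k(r)}$, since $\varphi$ at most triples lengths. Hence the bottom layer alone has at least $\tfrac{2}{9}w_k(r)^2$ three-cells, which gives $\mathrm{Vol}(B)\succeq w_k^2(r)$.

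On the area: after two visible detours you do reach the correct mechanism — that the four lateral faces of $S$ are each one trapezoid of a $\ddi{i}{n}{w_{k-1}(r)}$ (not of $\ddi{i}{n}{w_k(r)}$, precisely because $W$ has length $w_{k-1}(r)$), and by the geometric-series computation in Lemma~\ref{lem:thetadehn} each such trapezoid has area $\simeq w_k(r)$. That is exactly the paper's argument. However, your closing sentence that ``the matching lower bound is immediate since the top face alone contains at least one edge of length $\simeq w_k(r)$'' is wrong: the top face is $W^{-1}\cdot\ddii{ij}{n}{w_k(r)}$, i.e.\ the image under $\varphi^{-w_{k-1}(r)}$, whose linear dimensions are $O(1)$ (compare Figure~\ref{fig:G0basic}, where the top collapses toward a pole). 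The lower bound for $\mathrm{Area}(S-\ddii{ij}{n}{w_k(r)})$ comes from any single lateral trapezoid, not from the top face.
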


\begin{figure}[ht]
\psfrag{a}{$\rightsquigarrow$}
\psfrag{s}{\footnotesize{$\varphi^{w_{k-2}(r)}(u_{n1})$}}
\psfrag{b}{\footnotesize{$\varphi^{w_{k-1}(r)}(a_{ni1})$}}
\begin{center}
\includegraphics[width=3.5in]{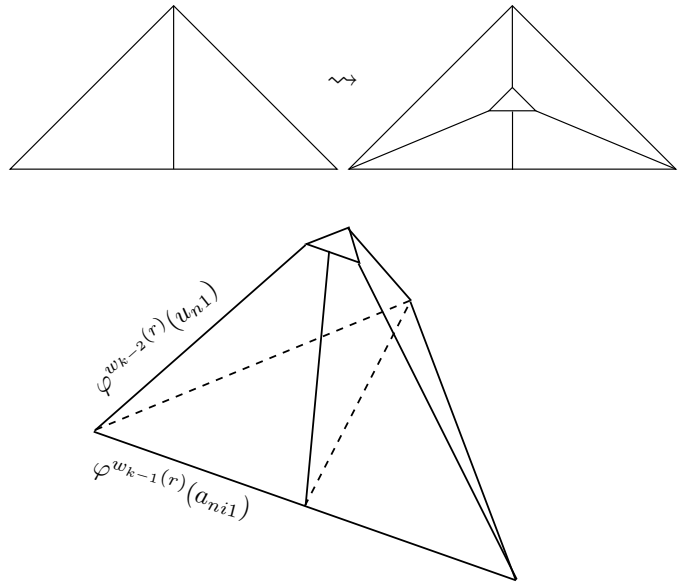}
\end{center}
\caption{A view from above of the action of $W$ on $\ddii{ij}{n}{w_k(r)}$, as well as the resulting 
$3$-ball in the case that $W=\varphi^{w_{k-2}(r)}(u_{n1})$. Note that each vertical quadrilateral on 
the boundary of the $3$-ball is identical to a portion of $\ddi{i}{n}{w_{k-1}(r)}$.
\label{fig:cone}}
\end{figure}

\begin{proof}
Note that $w_{k}^2(r)$ is exactly the area of the van Kampen diagram $\ddii{ij}{n}{w_k(r)}$. In the bottom layer of the 3-ball there is at least one 3-cell for every nine 2-cells in $\ddii{ij}{n}{w_k(r)}$. The volume claim follows.

The boundary of $\ddii{ij}{n}{w_k(r)}$ consists of four words of the form
 $\varphi^{w_{k-1}(r)}(g)$ for some $g\in\{a_{ni1},a_{nj1},a_{ni1}^{-1}u_{(n-1)1},a_{nj1}^{-1}u_{(n-1)1}\}$. The action of $W^{-1}$ on each of these words has the geometry of one trapezoid of a $\ddi{i}{n}{w_{k-1}(r)}$. As such the area claim follows from the argument in the proof of  Lemma~\ref{lem:thetadehn}.
\end{proof}

\begin{remark}
When $k\geq 2$ and $W=\varphi^{w_{k-2}(r)}(u_{n1})$, the action of $W^{-1}$ on each of the four boundary words of $\ddii{ij}{n}{w_k(r)}$ produces precisely a trapezoid of $\ddi{i}{n}{w_{k-1}(r)}$, as shown in Figure~\ref{fig:cone}. When $k=1$ and $W=u_{n1}^r$, it produces a trapezoid of $\ddi{i}{n}{w_0(r)}$.
\end{remark}

\subsubsection{Type I}

The geometry of $\ddi{i}{n}{w_k(r)}$ is that of the schematic quadrilateral with which a Type I move begins, in that each side of the quadrilateral has equivalent length, while the length of the diagonal and the total area are both exponential in the outer edge length. We now explain how a Type I move is performed on such a van Kampen diagram in $H_{n+1}$. Note that each $\ddi{i}{n}{w_k(r)}$ is contained in some single edge group for $H_{n+1}$ (depending on $i$). As such it admits an action by the stable letters ${\bf a}_{(n+1)j}$, for some $j$ depending on $i$.  In particular, given a word $V$ in the $a_{(n+1)ji}^{-1}$, we may perform the procedure analogous to the one 
performed on each belted trapezoid pair of $S_1(N)$ in the construction of $S_2(N)$ 
(cf.~Section~\ref{somespheres}).  In the following lemma, this is referred to as {\it allowing $V$ to act on $\ddi{i}{n}{w_k(r)}$}.

\begin{lemma}\label{lem:typei}
Suppose $W$ is a positive word in the $a_{nji}$ of length $w_k(r)$, and let $B$ denote the 3-ball in $H_{n+1}$ obtained by allowing $W^{-1}$ to act on $\ddi{i}{n}{w_k(r)}$. Let $S$ denote the 2-sphere boundary of $B$. Then
$$
\mathrm{Vol}(B)\succeq w_{k+1}(r),
$$
and
$$
\mathrm{Area}(S-\ddi{i}{n}{w_k(r)})\simeq [w_k(r)]^2.
$$
\end{lemma}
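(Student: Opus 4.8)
\textbf{Proof plan for Lemma~\ref{lem:typei}.}

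The plan is to mirror the analysis of Lemma~\ref{lem:typeii}, but now attaching a new $3$-ball along the van Kampen diagram $\ddi{i}{n}{w_k(r)}$ using the action of the stable letters $\stable{(n+1)}{j}$, which act on the relevant edge group $F_2 \rtimes_\theta F_4$ (Lemma~\ref{lem:edge-str}) by $\varphi$ on the $F_2$ factor and trivially on $F_4$. Concretely, $\ddi{i}{n}{w_k(r)}$ has the schematic geometry of the base quadrilateral of a Type~I move: four outer edges of length $\simeq w_k(r)$, a central (color-$(i-1)$) diagonal of length $\simeq w_{k+1}(r)$ labeled by $\varphi^{w_k(r)}\big(\mathcal{L}_{n+2}^{(1)}(i+2^{n+1})\big)$, and total area $\simeq w_{k+1}(r)$ (by Lemma~\ref{lem:thetadehn}). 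Allowing $W^{-1}$ to act, where $W$ is a positive word of length $w_k(r)$ in the $a_{nji}$, produces a $3$-ball $B$ exactly as in the passage from $S_1(N)$ to $S_2(N)$ in Section~\ref{somespheres}: each horizontal strip of $\ddi{i}{n}{w_k(r)}$, with boundary of the form $\varphi^{\ell}(u_{n1})\,\mathcal{L}^{(1)}_{n+2}(i)\,\varphi^{\ell}(u_{n1}^{-1})\,(\text{diagonal})$, gets a layer of $2$-cells under the action of $W^{-1}$ via the commutation relations $[a_{nji}^{-1}\cdots, \text{color-}i\text{ edge}]$.

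For the \textbf{volume lower bound}: $\text{Area}(\ddi{i}{n}{w_k(r)}) \simeq w_{k+1}(r)$ by Lemma~\ref{lem:thetadehn}, and since $W^{-1}$ is applied with $|W| = w_k(r) \geq 1$, the resulting $3$-ball contains at least one $3$-cell per bounded number (say nine) of $2$-cells in the bottom layer $\ddi{i}{n}{w_k(r)}$ — this is the same counting used in Lemma~\ref{lem:typeii}. Hence $\text{Vol}(B) \succeq w_{k+1}(r)$. For the \textbf{area of the new boundary}: $S - \ddi{i}{n}{w_k(r)}$ consists of the ``outer'' faces of $B$, namely (i) the top copy $\varphi^{w_k(r)}\big(\ddi{i}{n}{\cdots}\big)$-type diagram obtained by pushing the diagram across by $W^{-1}$ (whose boundary diagonal is now $\varphi$ applied $w_k(r)$ more times), and (ii) the collar quadrilaterals/triangles along the four outer edges. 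Each outer edge of $\ddi{i}{n}{w_k(r)}$ has length $\simeq w_k(r)$, and $W^{-1}$ of length $w_k(r)$ sweeps it out; since these outer edges are labeled by positive words and the $a_{nji}$ commute with the relevant color-$i$ letters, each such swept face is a van Kampen diagram for an equality of the form $W^{-1}\,g\,W = (\text{diagonal word of the same length})$ with $g$ a positive word of length $\simeq w_k(r)$, so each contributes area $\simeq [w_k(r)]^2$ (this is the $\ddii{ij}{n}{\cdot}$-type geometry, or directly the commutator-grid count $\simeq [w_k(r)]^2$ as in Lemma~\ref{lem:deltadehn} / the $S_2(N)$ discussion). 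Summing the finitely many outer faces, each of area $\simeq [w_k(r)]^2$, and noting the top face has area $\simeq w_{k+1}(r)$ — wait: this must be checked, since a term of size $w_{k+1}(r)$ would dominate. The point is that the top face is \emph{not} a full $\ddi{}{}{}$-diagram but rather the image of $\ddi{i}{n}{w_k(r)}$ shifted by $W^{-1}$, which is a diagram of the \emph{same} area $\simeq w_{k+1}(r)$ — so in fact one should be more careful: $S - \ddi{i}{n}{w_k(r)}$ should be read as the new boundary \emph{minus} both the original diagram and the shifted top copy (which, being $\simeq$-equivalent to a $\ddi{}{}{}$-diagram, is itself absorbed into the next stage of the induction, exactly as $S_2(N)$'s poles get the next action). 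Thus $S$ decomposes into the bottom $\ddi{i}{n}{w_k(r)}$, a top diagram $\simeq$-equivalent to another $\Theta$-diagram, and the collar faces of total area $\simeq [w_k(r)]^2$; the claimed quantity $\text{Area}(S - \ddi{i}{n}{w_k(r)})$ refers to those collar faces together with the top, and since $[w_k(r)]^2 \simeq w_k(r)^2$ while $w_{k+1}(r) \simeq \exp(w_k(r))$ dominates, actually the correct reading — consistent with $S_2(N)$ having area $\simeq N^2$ — is that $k$ here plays the role such that the \emph{final} sphere's area is $[w_k(r)]^2$; I would state and prove that the collar faces contribute $\simeq [w_k(r)]^2$ and that this is the dominant term in $S - \ddi{i}{n}{w_k(r)}$ precisely because the top face is itself set up to be further reduced (it is not counted as ``new'' area in the inductive bookkeeping).

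The \textbf{main obstacle} is pinning down exactly what $S - \ddi{i}{n}{w_k(r)}$ denotes and verifying that the $\exp$-sized diagonal does \emph{not} contaminate the area estimate: the resolution is that the long color-$(i-1)$ diagonal of $\ddi{i}{n}{w_k(r)}$ lies in the interior of $B$ (it is the ``equator'' of the attached ball, capped on the far side by the shifted copy), so it never appears on $S$; only the four short outer edges, each of length $\simeq w_k(r)$, get swept by $W^{-1}$, producing collar faces of area $\simeq [w_k(r)]^2$. Once this is set up cleanly the rest is the area count already carried out in the proofs of Lemmas~\ref{lem:deltadehn} and~\ref{lem:thetadehn}, applied to the four outer strips, together with the observation (Remark~\ref{rem:actions}) that palindromicity of $\varphi$ guarantees the diagonal labels of the collar faces are again of the form $\varphi^{w_k(r)}(g_1^{-1}h_1)$, so that the construction can be iterated.
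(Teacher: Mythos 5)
Your volume argument is fine (either your estimate via the full bottom layer of $\ddi{i}{n}{w_k(r)}$ or the paper's via the central strip gives $\succeq w_{k+1}(r)$), and your identification of the collar faces — four swept long edges giving half-$\ddii{ij}{n}{w_k(r)}$ pieces of area $\simeq [w_k(r)]^2$, plus four swept length-one edges giving strips of area $\simeq w_k(r)$ — is exactly the paper's computation. The genuine gap is in your treatment of the ``top face.'' You correctly sense that a face of area $\simeq w_{k+1}(r)$ would destroy the claimed bound, but you then assert that such a face (``the image of $\ddi{i}{n}{w_k(r)}$ shifted by $W^{-1}$, which is a diagram of the \emph{same} area $\simeq w_{k+1}(r)$'') actually sits on $S$, and you try to rescue the lemma by excluding it from $S-\ddi{i}{n}{w_k(r)}$ via ``inductive bookkeeping.'' That reinterpretation is not available: the lemma's quantity is $\mathrm{Area}(S-\ddi{i}{n}{w_k(r)})$, and anything on $S$ other than $\ddi{i}{n}{w_k(r)}$ counts. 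Your last paragraph (``the diagonal lies in the interior, capped on the far side by the shifted copy'') also does not help — the cap would still lie on $S$.

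The missing idea is that ``allowing $W^{-1}$ to act on $\ddi{i}{n}{w_k(r)}$'' is \emph{not} a uniform push of the whole disk by $W^{-1}$. By the definition given just before the lemma, it is the telescoping procedure from the $S_2(N)$ construction in Section~\ref{somespheres}: the horizontal strip of $\ddi{i}{n}{w_k(r)}$ at level $\ell$ (with long side of length $|\varphi^{\ell}(\first)|$) is pushed by the prefix of $W^{-1}$ of length $\ell$, not by the full $W^{-1}$. Since the stable letters $\stable{(n+1)}{j}$ act on the $F_2$ factor by $\varphi$, pushing the level-$\ell$ strip by a length-$\ell$ prefix of $W^{-1}$ shrinks its boundary word to a single commutation $2$-cell (e.g. $u_{01}\,a_{022}\,u_{01}^{-1}\,a_{021}^{-1}$ in the $H_1$ case). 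So the ``top'' of the $3$-ball is not a shifted $\Theta$-diagram at all: it is a staircase of single $2$-cells, one per strip, of total area $\simeq w_k(r)$, which is dominated by the collar contribution $\simeq [w_k(r)]^2$. This is precisely why the paper's proof can account only for the sweep of the eight boundary edges of $\ddi{i}{n}{w_k(r)}$ and then conclude; there is no hidden exponential-area face. Your proposal would go through once you replace the uniform-push picture with the telescoping one and observe the collapse of the strips.

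One smaller point: your volume count ``at least one $3$-cell per nine $2$-cells in the bottom layer'' is borrowed from the Type II case; in the telescoping Type I picture the clean statement (the one the paper uses) is that the central strip of $\ddi{i}{n}{w_k(r)}$, which has length $w_{k+1}(r)$, lies under a full column of the $3$-ball and already yields $\mathrm{Vol}(B)\succeq w_{k+1}(r)$.
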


\begin{figure}[ht]
\psfrag{a}{$\rightsquigarrow$}
\psfrag{u}{\footnotesize{$\varphi^{w_{k-1}(r)}(u_{n1})$}}
\psfrag{d}{\footnotesize{$\varphi^{w_{k-1}(r)}(a_{(n+1)i1})$}}
\psfrag{s}{\footnotesize{$\delta\Big(\varphi^{w_{k-1}(r)}(a_{(n+1)i1}^{-1}),\varphi^{w_{k-1}(r)}(u_{n1})\Big)$}}
\begin{center}
\includegraphics[width=3.5in]{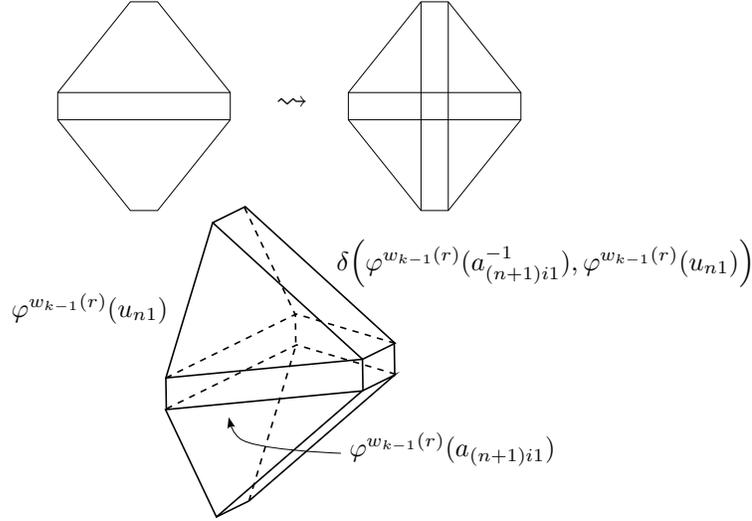}
\end{center}
\caption{A view from above of the action of $W$ on $\ddi{i}{n}{w_k(r)}$, as well as the resulting $3$-ball in the case that $W=\varphi^{w_{k-1}(r)}(a_{(n+1)i1})$. Note that each triangle on the boundary of the $3$-ball is identical to a portion of $\ddii{ij}{n+1}{w_{k}(r)}$.
\label{fig:wing}}
\end{figure}

\begin{proof}
The lower volume bound is trivial, as this is the length of the central strip, and there is at least one 3-cell for every three 2-cells in this strip.

Suppose first that $k\geq 1$. Then the boundary of $\ddi{i}{n}{w_k(r)}$ consists of four copies of the word $\varphi^{w_{k-1}(r)}(u_{n1})$ along with four words of length one (those from $\mathcal{L}_{n+2}$). The action of $\varphi^{w_{k-1}(r)}(a_{nj1}^{-1})$ on $\varphi^{w_{k-1}(r)}(u_{n1})$ has the geometry of one-half of a $\ddii{ij}{n}{w_k(r)}$ diagram. As such the area of each of the corresponding pieces of $S-\ddi{i}{n}{w_k(r)})$ is $\simeq [w_{k}(r)]^2$. Because each letter in $\varphi^{w_{k-1}(r)}(a_{nj1}^{-1})$ commutes with each of the letters in $\mathcal{L}_{n+2}$, the area resulting from the action on these words is $\simeq w_{k}(r)$. The area claim follows for this case.

For the case that $k=0$, the same argument applies with $\varphi^{w_{k-1}(r)}(g)$ replaced by $g^r$.
\end{proof}

\begin{remark}
When $W=\varphi^{w_{k-1}(r)}(a_{nj1})$, the action of $W^{-1}$ on $\varphi^{w_{k-1}(r)}(u_{n1})$ produces precisely one half of $\ddii{ij}{n}{w_k(r)}$.
\end{remark}

\subsection{Lower bound estimates}\label{base}

\noindent {\it{Base Cases.}} We now inductively compute lower bounds for $\dehntwo{G_n}x$ and $\dehntwo{H_n}x$. For the base cases, we return to the spheres described in Section~\ref{somespheres} and point out how the claims made there are verified by the computations just obtained. The only ingredient we lack is the following.
\begin{lemma}[Slab volume]\label{lem:slab}
The volume of the slab $\varphi^{w_k(r)}(a_{011})\times\varphi^{w_k(r)}(a_{021})\times y_{1}$ is $[w_{k+1}(r)]^2$.
\end{lemma}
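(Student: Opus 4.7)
The plan is to exploit the product cell structure of $K_{H_0}$ described in Section~\ref{sec:cells}. Since $K_{H_0}$ is the product of three copies of the standard wedge of two circles, its universal cover $\widetilde{K_{H_0}}$ is naturally a product $T\times T\times T$ of three $4$-valent trees, with the product CW structure whose $3$-cells are (unit) cubes indexed by triples of $1$-cells, one from each factor. A $3$-dimensional sub-product of the form $P_1\times P_2\times P_3$, where each $P_i$ is an embedded edge-path in $T$, is therefore an embedded $3$-ball, and its volume (the number of $3$-cells it contains) equals the product $|P_1|\cdot|P_2|\cdot|P_3|$ of the combinatorial lengths of the three factor paths.

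First I would identify the three factor paths of the slab. The words $\varphi^{w_k(r)}(a_{011})$ and $\varphi^{w_k(r)}(a_{021})$ are positive words in the generators of the first and second $F_2$ factors respectively, and since $\varphi$ is a freely reduced positive substitution the corresponding edge-paths in the respective trees are embedded. The third factor path is the single edge labeled $y_1$, of length $1$. Thus the slab is the embedded $3$-ball
\[
\varphi^{w_k(r)}(a_{011})\times\varphi^{w_k(r)}(a_{021})\times y_1\subset T\times T\times T,
\]
and its volume equals the product of the word lengths of the three factors.

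Next I would compute the factor lengths. Because the generator $a_{011}$ plays the role of $\first$ for the first free factor (and similarly $a_{021}$ for the second), we have
\[
\big|\varphi^{w_k(r)}(a_{011})\big|\;=\;\big|\varphi^{w_k(r)}(\first)\big|\;=\;w_{k+1}(r),
\]
by the very definition of $w_{k+1}$ given in Section~\ref{sec:iteration}, and likewise $|\varphi^{w_k(r)}(a_{021})|=w_{k+1}(r)$. Multiplying the three factor lengths gives
\[
\vol{\varphi^{w_k(r)}(a_{011})\times\varphi^{w_k(r)}(a_{021})\times y_1}\;=\;w_{k+1}(r)\cdot w_{k+1}(r)\cdot 1\;=\;[w_{k+1}(r)]^2,
\]
as claimed. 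There is no substantive obstacle here; the only thing to be careful about is confirming that the slab is genuinely embedded, which is immediate from the product-of-trees structure and the fact that $\varphi^{w_k(r)}(\first)$ is freely reduced.
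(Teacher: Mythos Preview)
Your argument is correct. You compute the slab volume directly as a product of the three factor edge-path lengths in $T\times T\times T$, using only the product cell structure of $K_{H_0}$ and the definition $w_{k+1}(r)=|\varphi^{w_k(r)}(\first)|$.

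The paper takes a slightly different route: it identifies the slab as $\ddii{12}{0}{w_{k+1}(r)}\times[0,1]$ with the product cell structure and then invokes Lemma~\ref{lem:deltadehn} for the area of the base. This has the advantage of tying the slab into the $\Delta$-diagram machinery used throughout Section~\ref{sec:lower}, but it is a little indirect here, since Lemma~\ref{lem:deltadehn} is stated only up to $\simeq$ and one must unwind the special $n=0$ case (with ${\bf u}_{-1}={\bf a}_{01}{\bf a}_{02}^{-1}$) to see the exact equality. Your approach is more self-contained and yields the exact value $[w_{k+1}(r)]^2$ without that detour; the paper's approach buys consistency with the surrounding narrative.
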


\begin{proof}
As a cell-complex this slab is isomorphic to $\ddii{12}{0}{w_{k+1}(r)}\times [0,1]$ with the product cell structure. The result then follows from Lemma~\ref{lem:deltadehn}.
\end{proof}
For $G_0$ we take $k=0$, and so begin with the slab $\varphi^r(a_{011})\times\varphi^r(a_{021})\times y_1$ with volume $[w_1(r)]^2$. The boundary of this slab is a topological sphere containing two copies of $\ddii{12}{0}{w_1(r)}$. Let $B_{G_0}(r)$ denote the 3-ball obtained by allowing $u_{01}^{-r}$ to act on both of these. Let $S_{G_0}(r)$ denote the 2-sphere boundary of $B_{G_0}(r)$.

\begin{lemma}[Area and volume in $G_0$]
 $\mathrm{Area}(S_{G_0}(r))\simeq w_1(r)$ and $\mathrm{Vol}(B_{G_0}(r))\succeq [w_1(r)]^2$.
\end{lemma}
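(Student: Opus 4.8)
The plan is to unwind the two displayed claims directly from the construction of $B_{G_0}(r)$ and $S_{G_0}(r)$ together with the area/volume estimates already established. Recall that $B_{G_0}(r)$ is obtained by allowing $u_{01}^{-r}$ to act on each of the two copies of $\ddii{12}{0}{w_1(r)}$ sitting in the boundary of the slab $\varphi^r(a_{011})\times\varphi^r(a_{021})\times y_1$. This is precisely the Type II move of Lemma~\ref{lem:typeii} applied in the case $n=0$, $k=1$, with $W=u_{01}^r$ a positive word in the $u_{0j}$ of length $w_0(r)=r$: the diagram $\ddii{12}{0}{w_1(r)}$ is the edge-group diagram, and $u_{01}^{-r}$ acts on it.

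For the volume bound, I would combine two observations. First, $\mathrm{Vol}(B_{G_0}(r))$ is bounded below by the volume of the original slab, since the slab sits inside $B_{G_0}(r)$; by Lemma~\ref{lem:slab} (with $k=0$) this volume is exactly $[w_1(r)]^2$. Alternatively — and this is the more self-contained route — Lemma~\ref{lem:typeii} applied to each of the two copies of $\ddii{12}{0}{w_1(r)}$ already gives a cone of volume $\succeq w_1^2(r)$, coming from the bottom layer of $3$-cells (one per nine $2$-cells of $\ddii{12}{0}{w_1(r)}$, which has area $\simeq[w_1(r)]^2$ by Lemma~\ref{lem:deltadehn}). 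Either way we conclude $\mathrm{Vol}(B_{G_0}(r))\succeq [w_1(r)]^2$.

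For the area claim, the sphere $S_{G_0}(r)$ decomposes as the equatorial band of the slab together with the two caps produced by the action of $u_{01}^{-r}$. The equatorial band is a union of pieces of the form $\varphi^r(g)$ with $g\in\{a_{0j1}\}$, contributing area $\simeq w_1(r)$ by Lemma~\ref{lem:deltadehn}/Lemma~\ref{lem:growth}. Each cap is, by the area statement in Lemma~\ref{lem:typeii}, equal to $\mathrm{Area}(S-\ddii{12}{0}{w_1(r)})\simeq w_1(r)$: indeed the action of $u_{01}^{-r}$ on each of the four boundary words $\varphi^r(g)$ of $\ddii{12}{0}{w_1(r)}$ has the geometry of a single trapezoid of a $\ddi{i}{0}{w_0(r)}$ diagram, whose area is $\simeq w_1(r)$ by the proof of Lemma~\ref{lem:thetadehn}. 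Summing the boundedly many pieces gives $\mathrm{Area}(S_{G_0}(r))\simeq w_1(r)$.

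I do not expect a serious obstacle here — this lemma is a bookkeeping assembly of the machinery already in place. The one point requiring a little care is making sure the ``$k=0$'' and ``$k=1$'' indices line up: the slab is $\ddii{12}{0}{w_1(r)}\times[0,1]$ (so Lemma~\ref{lem:slab} is invoked with $k=0$), while the Type II move that produces the caps is Lemma~\ref{lem:typeii} with $k=1$ and $W=u_{01}^r$ of length $w_0(r)=r$; the caps then look like trapezoids of $\ddi{i}{0}{w_0(r)}$, consistent with the remark following Lemma~\ref{lem:typeii}. Once these indices are matched, both displayed inequalities fall out immediately.
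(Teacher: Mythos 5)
Your argument is correct and is essentially the paper's own proof: the volume lower bound comes from the slab volume via Lemma~\ref{lem:slab} (with $k=0$), and the area estimate comes from summing the four equatorial band pieces (each of area $w_1(r)$) with the cap contributions controlled by Lemma~\ref{lem:typeii}. You correctly apply Lemma~\ref{lem:typeii} with $k=1$ — the paper's proof writes ``$k=0$'' at this step, but that conflicts with that lemma's hypothesis $k\ge 1$ and would give the wrong asymptotic $w_0(r)=r$ rather than $w_1(r)$; $k=1$ is the intended index, as your careful bookkeeping makes clear.
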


\begin{proof}
The volume claim follows immediately from the volume of the slab, computed above. For the area, we note that the band around the slab consists of four pieces, each with area $|\varphi^r(\first)|=w_1(r)$. The remaining area is shown to be $\simeq w_1(r)$ by applying Lemma~\ref{lem:typeii} with $k=0$.
\end{proof}

We now consider $H_1$. Note that $S_{G_0}(r)$ consists of four copies of $\ddi{i}{0}{w_0(r)}$ (two with $i=1$, two with $i=2$) plus two additional 2-cells. We define $B_{H_1}(r)$ to be the 3-ball obtained by allowing $a_{111}^{r}$ and $a_{121}^{r}$ to act on the corresponding van Kampen diagrams. We let $S_{H_1}(r)$ denote the 2-sphere boundary of this ball.
\begin{prop}[Area and volume in $H_1$]
 $\mathrm{Area}(S_{H_1}(r))\simeq r^2$ and $\mathrm{Vol}(B_{H_1}(r))\succeq [w_1(r)]^2$.
\end{prop}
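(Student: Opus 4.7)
The plan is to apply the Type I move machinery (Lemma~\ref{lem:typei}) to each of the four $\ddi{i}{0}{w_0(r)}$ faces identified in the decomposition of $S_{G_0}(r)$, then sum the area contributions and bound the volume from below by what we have already built.

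First I would verify that each $\ddi{1}{0}{w_0(r)}$ face lies in the edge group $\langle \stable 01\rangle \rtimes_\theta \langle {\bf u}_0,{\bf y}\rangle$ of $H_1$, so that it admits an action by the associated stable letter $a_{111}$; similarly each $\ddi{2}{0}{w_0(r)}$ lies in $\langle \stable 02\rangle \rtimes_\theta \langle {\bf u}_0,{\bf y}\rangle$ and admits an action by $a_{121}$. With the correspondences in place, I would apply Lemma~\ref{lem:typei} with $n=0$ and $k=0$, taking $W$ to be the positive word $a_{1i1}^r$ of length $w_0(r)=r$. For each of the four faces, the resulting Type I $3$-ball $B$ has boundary $2$-sphere $S$ satisfying $\mathrm{Area}(S-\ddi{i}{0}{w_0(r)})\simeq [w_0(r)]^2 = r^2$. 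Summing these four contributions and adding the constant area of the two residual $2$-cells that remain untouched on the sphere gives $\mathrm{Area}(S_{H_1}(r))\simeq 4r^2+2\simeq r^2$.

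For the volume estimate, the key observation is that the four Type I $3$-balls are attached to the exterior of $B_{G_0}(r)$ along the four $\ddi{i}{0}{w_0(r)}$ faces of its boundary. Hence $B_{H_1}(r)\supset B_{G_0}(r)$ as embedded $3$-balls in $\widetilde{K_{H_1}}$, and therefore
\[
\mathrm{Vol}(B_{H_1}(r))\;\geq\;\mathrm{Vol}(B_{G_0}(r))\;\succeq\;[w_1(r)]^2
\]
by the preceding area-and-volume lemma for $G_0$. (The fact that $B_{G_0}(r)$ sits inside $\widetilde{K_{H_1}}$ as an embedded $3$-ball is automatic, since $K_{G_0}$ is a subcomplex of $K_{H_1}$ by the model construction of Section~\ref{sec:cells}, and embeddings lift.)

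The main delicate point is purely combinatorial: verifying the decomposition of $S_{G_0}(r)$ into exactly four copies of $\ddi{i}{0}{w_0(r)}$ (two for each $i$) together with two residual $2$-cells, and checking that each such face is attached along the correct edge group so that the corresponding stable letter $a_{1i1}$ indeed acts as required by Lemma~\ref{lem:typei}. Once this bookkeeping is in place, both conclusions reduce immediately to the quantitative statements proved in Lemma~\ref{lem:typei} and the volume bound for $B_{G_0}(r)$; no new geometric input is needed. This verification is the prototype for the inductive step that appears in the general case.
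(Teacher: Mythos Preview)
Your proposal is correct and follows essentially the same route as the paper. Both arguments invoke Lemma~\ref{lem:typei} (with $n=0$, $k=0$) on the four $\ddi{i}{0}{w_0(r)}$ faces to get the $r^2$ area estimate, and both obtain the volume bound from the central slab---you route it through $B_{G_0}(r)$, the paper cites the slab directly, but these amount to the same thing since the slab sits inside $B_{G_0}(r)\subset B_{H_1}(r)$.
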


\begin{proof}
We again obtain $\mathrm{Vol}(B_{H_1}(r))\succeq [w_1(r)]^2$ from the volume of the central slab. For the area we use Lemma~\ref{lem:typei} to compute
$$
\mathrm{Area}(S_{H_1}(r))=8\mathrm{Area}(\ddii{12}{1}{r})+16r+6\simeq r^2.
$$
\end{proof}

\begin{prop}[Lower bounds for $G_0$ and $H_1$]\label{prop:lower}
$$\delta_{G_0}^{(2)}(x)\succeq x^2\qquad \delta_{H_1}^{(2)}(x)\succeq e^{\sqrt{x}}$$
\end{prop}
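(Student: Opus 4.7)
The plan is to combine Remarks~\ref{rem:embedded} and~\ref{rem:sparse} with the area and volume estimates of the lemma and proposition immediately preceding. Granting for the moment that the $3$-ball $B_{G_0}(r)$ (respectively $B_{H_1}(r)$) embeds in the universal cover of $K_{G_0}$ (resp.\ $K_{H_1}$), Remark~\ref{rem:embedded} tells us that the filling volume of each boundary sphere is at least the $3$-volume of the ball it bounds. Combined with those preceding estimates, this yields the pointwise bounds $\dehntwo{G_0}{w_1(r)} \succeq [w_1(r)]^2$ and $\dehntwo{H_1}{r^2} \succeq [w_1(r)]^2$.

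To promote these pointwise estimates to statements about the whole function, I verify the sparsity hypothesis of Remark~\ref{rem:sparse}. By the calculation preceding Lemma~\ref{lem:growth}, $w_1(r+1)/w_1(r)\to 1+\sqrt{2}$, so the sequence $\{w_1(r)\}$ is sparse; and $(r+1)^2\leq 4r^2$ for $r\geq 1$, so $\{r^2\}$ is sparse. Combining with $w_1(r)\simeq e^r$ from Lemma~\ref{lem:growth}, Remark~\ref{rem:sparse} upgrades the pointwise bounds to $\dehntwo{G_0}{x}\succeq x^2$ (since at $x\simeq w_1(r)\simeq e^r$ the ball volume $[w_1(r)]^2\simeq x^2$) and $\dehntwo{H_1}{x}\succeq e^{2\sqrt{x}}\simeq e^{\sqrt{x}}$ (since at $x\simeq r^2$ the ball volume $[w_1(r)]^2\simeq e^{2r}=e^{2\sqrt{x}}$, and one easily checks $e^{2\sqrt{x}}\simeq e^{\sqrt{x}}$).

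The one step that is not a direct invocation of an earlier result is the embeddedness of $B_{G_0}(r)$ and $B_{H_1}(r)$, and this is the main obstacle. I would argue it from the graph-of-groups structure described in Section~\ref{sec:cells}: each ball is built from a product slab lying inside a single vertex translate of $\widetilde{K_{H_0}}$, by iteratively attaching stable-letter translates of $2$-dimensional edge subcomplexes, one HNN stage at a time. Lemma~\ref{lem:edge-str} provides injectivity of the edge inclusions, and the tree structure of the Bass-Serre tree for each successive HNN extension ensures that the translates used in building the ball lie in distinct vertex spaces along a path in the tree and meet only along their common attaching $2$-dimensional faces. The translates of the slab along $u_{01}^{-i}$ for $0\le i\le r$ in the $G_0$ construction, and the subsequent translates along $a_{1j1}^{i}$ in the $H_1$ construction, are therefore arranged along rooted sub-arcs in the respective Bass-Serre trees, producing an embedded $3$-ball. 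Once embeddedness is established, the remainder of the proof is bookkeeping with the area and volume computations of Lemmas~\ref{lem:slab}, \ref{lem:typeii}, and~\ref{lem:typei}, together with the growth estimate of Lemma~\ref{lem:growth}.
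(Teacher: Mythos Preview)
Your overall plan is the same as the paper's: invoke Remarks~\ref{rem:embedded} and~\ref{rem:sparse}, plug in the preceding area/volume estimates, and reduce everything to showing that $B_{G_0}(r)$ and $B_{H_1}(r)$ are embedded. Your handling of the sparsity hypothesis is fine and is exactly what Lemmas~\ref{lem:Hvol} and~\ref{lem:Gvol} later formalize.

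The embeddedness argument is where your proposal diverges from the paper, and it is also where your sketch is incomplete. You appeal to the Bass--Serre tree and assert that the successive stable-letter translates ``lie in distinct vertex spaces along a path in the tree.'' But $B_{G_0}(r)$ contains \emph{two} Type~II balls (one above and one below the slab), and $B_{H_1}(r)$ contains \emph{four} Type~I balls; the issue is precisely whether these several pieces, all using the same stable letters, collide. In Bass--Serre terms you would need to check that the attaching disks of the different pieces lie in \emph{different cosets} of the relevant edge group inside the vertex group (for $G_0$: the top and bottom faces of the slab differ by $y_1\notin\langle{\bf a}_{01}\rangle\times\langle{\bf a}_{02}\rangle$, so they exit through distinct edges of the tree). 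Your sketch does not make this verification, and without it the tree argument does not exclude the possibility that two component balls map to the same branch.

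The paper sidesteps this coset bookkeeping with a cleaner device: it uses the retraction $G_0\to\langle{\bf u}_0\rangle$ (and analogously $H_1\to\langle{\bf a}_{1j}\rangle$) to show that any two vertices of the ball carrying the same group label must have the same stable-letter exponent, hence the same base vertex on the already-embedded boundary sphere of the previous stage; since the component balls attach along \emph{disjoint} disks on that sphere, the two vertices must lie in the same component. This argument is more elementary and avoids having to identify which cosets are involved. Your Bass--Serre approach can be made to work, but it needs the extra coset check to be complete.
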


\begin{proof}
By Remark~\ref{rem:embedded}, (and assuming Lemmas~\ref{lem:Hvol} and ~\ref{lem:Gvol} below), it suffices now to show that $B_{G_0}(r)$ and $B_{H_1}(r)$ are embedded in $\widetilde{K_{G_0}}$ and $\widetilde{K_{H_1}}$, respectively (but see Remark~\ref{rem:composition} below).

Note that $B_{G_0}(r)$ consists of three component balls, each of which is embedded by construction: the initial equatorial slab and the two balls resulting from the two Type II moves. Call these latter two \emph{Type II balls}. If we label a vertex in the slab with the identity element $e\in G_0$, the other vertices in $B_{G_0}(r)$ naturally inherit labels by group elements. To show that $B_{G_0}(r)$ is embedded, it suffices to show that any two vertices in the cones that carry the same label must lie in the same Type II ball.

Elements $g\in G_0$ labeling vertices in a Type II ball satisfy $g=hs$, for some $h\in H_0$ and $s\in\langle{\bf u}_{0}\rangle$ (this is because $\langle{\bf u}_{0}\rangle$ acts by automorphisms). If two such elements $g_1$ and $g_2$ are equal in $G_0$, then $h_1s_1=h_2s_2$. By considering the natural retraction $G_0\to\langle{\bf u}_{0}\rangle$, we see then that $s_1=s_2$, from which it follows that $h_1=h_2$. Note that $h_1$ and $h_2$ label vertices on the embedded sphere boundary of the slab. Because distinct Type II balls are attached to this sphere along disjoint disks, $h_1=h_2$ implies that the vertices labeled by $g_1$ and $g_2$ lie in the same Type II ball.

For $B_{H_1}(r)$ the argument is similar. We think of $B_{H_1}(r)$ as having five component balls: $B_{G_0}(r)$ and four \emph{Type I balls}. As before, it suffices to show that any two vertices in the Type I balls carrying the same label must lie in the same Type I ball.

Elements $h$ labelling vertices in a Type I ball satisfy $h=gs$, where $g\in G_0$ and $h\in\langle{\bf a}_{1j}\rangle$. If two such elements $h_1$ and $h_2$ are equal in $H_1$, we deduce as before that $s_1=s_2$. In particular, the two corresponding vertices lie in Type I balls corresponding to the same stable group. Such balls are attached along disjoint disks on the embedded sphere $S_{G_0}(r)$. As before we deduce that $h_1=h_2$ only if the corresponding vertices lie in the same Type I ball.
\end{proof}

\begin{remark}\label{rem:composition}
The equivalence relation $\simeq$ behaves poorly under composition. In particular, one cannot always deduce the nature of volume as a function of area given (in)equivalencies of each as functions of $r$. It is not difficult to show, however, that these difficulties do not arise for the particular pairs of functions we encounter.
\end{remark}

\subsection{Induction steps}\label{induction}

Here we put together the pieces of the previous subsections to compute lower bounds for $\delta^{(2)}_{H_n}(x)$ and $\delta^{(2)}_{G_n}(x)$.

\medskip

\noindent {\it{Lower Bounds in $H_n$.}} Begin with a slab in $(F_2)^3$ of the form $\varphi^{w_{n-1}(r)}(a_{011})\times\varphi^{w_{n-1}(r)}(a_{021})\times y_1$, and note that the volume of this slab is $[w_n(r)]^2$. Assume by induction that a sphere $S_{G_{n-1}}(r)$ has been built upon this slab via alternating Type I and Type II moves, so that $S_{G_{n-1}}(r)$ consists of
\begin{itemize}
\item $2^{2n}$ copies of $\ddi{i}{n-1}{w_0(r)}$; and
\item $\sum_{k=1}^{2n-1}2^k$ individual 2-cells.
\end{itemize}
These hypotheses are verified above for $G_0$ ($n=1$). Let $B_{H_n}(r)$ denote the 3-ball obtained by applying a Type II move using $a_{nij}^{-r}$ to each copy of $\ddi{i}{n-1}{w_0(r)}$, and let $S_{H_n}(r)$ denote its boundary 2-sphere.  

\begin{lemma}\label{Hninduction}
The 2-sphere $S_{H_n}(r)$ consists of
\begin{itemize}
\item $2^{2n+1}$ copies of $\ddii{ij}{n}{w_0(r)}$;
\item $2^{2n+2}$ strips with dimensions $r\times 1$;
\item $\sum_{k=1}^{2n}2^k$ individual 2-cells.
\end{itemize}
Moreover, we have the following estimates:
$$
\mathrm{Vol}(B_{H_n}(r))\succeq [w_n(r)]^2\qquad\mathrm{Area}(S_{H_n}(r))\simeq [w_0(r)]^2=r^2.
$$
\end{lemma}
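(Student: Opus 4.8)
The plan is to verify the three structural bullet points by bookkeeping the Type II construction applied to $S_{G_{n-1}}(r)$, and then to read off the volume and area estimates from Lemma~\ref{lem:typeii} and Lemma~\ref{lem:deltadehn}. By the inductive hypothesis, $S_{G_{n-1}}(r)$ is assembled from $2^{2n}$ copies of $\ddi{i}{n-1}{w_0(r)}$ together with $\sum_{k=1}^{2n-1}2^k$ isolated $2$-cells. A Type II move is performed on each of the $2^{2n}$ copies of $\ddi{i}{n-1}{w_0(r)}$, using the word $W=a_{nij}^{r}$ (of length $w_0(r)=r$) acting on the relevant edge group of $G_{n-1}$ inside $H_n$; here one uses that $\ddi{i}{n-1}{w_0(r)}$ lies in an edge group for $G_{n-1}$ (Lemma~\ref{lem:edge-str}(2),(3)), so these actions are defined.

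First I would count the cells on the new sphere. Each Type II move replaces one copy of $\ddi{i}{n-1}{w_0(r)}$ (a quadrilateral made of four trapezoids, by the description preceding Lemma~\ref{lem:thetadehn}) with a $3$-ball; on the boundary, the $\ddi{i}{n-1}{w_0(r)}$ disk itself is retained as the ``back half'', and the ``front half'' is the region $S - \ddi{i}{n-1}{w_0(r)}$ produced by the action of $W^{-1}$. By the Remark after Lemma~\ref{lem:typeii}, the action of $W^{-1}=a_{nij}^{-r}$ on each of the four boundary words of the quadrilateral produces a trapezoid of $\ddi{i}{n-1}{w_0(r)}$ — but more precisely, recall that the Type II move deletes the $(i-1)$-colored diagonal, so what one sees on the front half are \emph{halves} of $\ddii{ij}{n}{w_0(r)}$ diagrams. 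Four trapezoids from one $\ddi{i}{n-1}{w_0(r)}$ give two full copies of $\ddii{ij}{n}{w_0(r)}$, accounting for the factor $2$: $2^{2n}\cdot 2 = 2^{2n+1}$ copies of $\ddii{ij}{n}{w_0(r)}$. The $r\times 1$ strips arise as the narrow bands where the new $3$-balls' front halves meet the old material along edges labeled by elements of $\mathcal{L}$; there are four such strips per $\ddi{i}{n-1}{w_0(r)}$ (matching the four trapezoid bases, cf.\ the ``$16r$'' term in the $H_1$ proposition), giving $2^{2n}\cdot 4 = 2^{2n+2}$. Finally, the isolated $2$-cells of $S_{G_{n-1}}(r)$ are retained and each Type II move contributes no new isolated $2$-cells of its own beyond what the strip/$\ddii{}{}{}$ count already captures — but the combination-subdivision adds one barycenter cell per move at the sphere's poles; tracking this doubling carefully yields $\sum_{k=1}^{2n-1}2^k + 2^{2n} = \sum_{k=1}^{2n}2^k$ isolated $2$-cells. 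I would confirm this last count against the explicit $H_1$ case ($n=1$: $\sum_{k=1}^{2}2^k = 6$, matching the ``$+6$'' in the $H_1$ proposition).

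For the estimates: the volume bound $\mathrm{Vol}(B_{H_n}(r))\succeq [w_n(r)]^2$ follows from the volume of the central slab $\varphi^{w_{n-1}(r)}(a_{011})\times\varphi^{w_{n-1}(r)}(a_{021})\times y_1$, which is $[w_n(r)]^2$ by Lemma~\ref{lem:slab} (with $k=n-1$), since the slab is a subcomplex of $B_{H_n}(r)$ built up by the alternating moves and $3$-volume only increases. For the area, by Lemma~\ref{lem:deltadehn} each of the $2^{2n+1}$ copies of $\ddii{ij}{n}{w_0(r)}$ has area $\simeq [w_0(r)]^2 = r^2$, each of the $2^{2n+2}$ strips has area $r$, and there are $\sum_{k=1}^{2n}2^k$ isolated $2$-cells; summing, $\mathrm{Area}(S_{H_n}(r)) = 2^{2n+1}\,\mathrm{Area}(\ddii{ij}{n}{w_0(r)}) + 2^{2n+2}r + \sum_{k=1}^{2n}2^k \simeq r^2$, where the constants $2^{2n+1}$ etc.\ are absorbed because $r$ is the independent variable and $n$ is fixed for the purposes of this $\simeq$.

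The main obstacle will be the precise combinatorial bookkeeping of the isolated $2$-cells and of how a single $\ddi{i}{n-1}{w_0(r)}$ disk decomposes into exactly two $\ddii{ij}{n}{w_0(r)}$ halves plus four boundary strips under one Type II move — i.e.\ making the cell-count bullets exactly right rather than merely up to $\simeq$. The cleanest way to nail this is to reconcile the general count with the already-verified $n=1$ base case (the $H_1$ proposition, where $8 = 2^{3}$ copies of $\ddii{12}{1}{r}$, $16 = 2^{4}$ linear strips, and $6 = \sum_{k=1}^{2}2^k$ isolated cells) and then observe that each subsequent Type II generation multiplies the ``active'' disk count by the branching factor dictated by the subdivision rule, exactly as recorded. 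Everything else is a direct appeal to Lemmas~\ref{lem:deltadehn}, \ref{lem:slab}, and~\ref{lem:typeii}.
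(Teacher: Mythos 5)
Your proposal follows the paper's approach and arrives at the same cell counts and the same area/volume estimates, so the overall structure is right. But there are several confusions in the middle that should be untangled, because as written the justification for the count of $\ddii$-pieces is not correct even though the number comes out right.

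First, the move applied to each $\ddi{i}{n-1}{w_0(r)}$ in passing from $S_{G_{n-1}}(r)$ to $S_{H_n}(r)$ is a \emph{Type I} move in the paper's terminology, not a Type II move: it acts on a $\ddi$-shaped piece (outer edges of length $r$, long diagonal of length $w_1(r)$, area $\simeq w_1(r)$) by the new stable letters ${\bf a}_{nj}$, and the governing lemma is Lemma~\ref{lem:typei}, not Lemma~\ref{lem:typeii}. (The setup paragraph in the paper preceding the lemma statement does mistakenly say ``Type II,'' but the paper's own proof and the rest of the paper's conventions make clear it is Type I.) Consequently the Remark you cite is the one following Lemma~\ref{lem:typei} — which says the action of $\varphi^{w_{k-1}(r)}(a_{nj1})^{-1}$ on $\varphi^{w_{k-1}(r)}(u_{n1})$ produces precisely one half of $\ddii{ij}{n}{w_k(r)}$ — and not the Remark after Lemma~\ref{lem:typeii}, which is about the opposite kind of move (acting on $\ddii$ by ${\bf u}$-letters).

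Second, $\ddi{i}{n-1}{w_0(r)}$ is not a quadrilateral made of four trapezoids. By Figure~\ref{fig:thetadehn} and the proof of Lemma~\ref{lem:thetadehn}, it consists of a top trapezoid, a bottom trapezoid, and a central strip. The number four in the half-$\ddii$ count comes from the fact that the \emph{boundary} of $\ddi{i}{n-1}{w_0(r)}$ contains four $u^r$-words (Lemma~\ref{lem:typei}), and the action of $a_{nj1}^{-r}$ on each of these produces one half of a $\ddii{ij}{n}{w_0(r)}$. Third, and relatedly, those four halves do not join up among themselves to give ``two full copies per $\ddi$''; rather, a half arising from one $\ddi$ joins with a half arising from the \emph{adjacent} $\ddi$ across a shared boundary edge labeled $(u_{(n-1)1})^r$ (this is exactly the point addressed explicitly in the paper's proof, with Figure~\ref{fig:wing}). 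The arithmetic $2^{2n}\cdot 4 / 2 = 2^{2n+1}$ of course gives the same total, but the mechanism you describe would fail to account for how the half-diagrams are actually glued. Once you repair these three points — use Lemma~\ref{lem:typei}, describe $\ddi$ correctly, and pair the halves across adjacent pieces — the remaining bookkeeping (strips, isolated $2$-cells, the slab volume via Lemma~\ref{lem:slab}, and the area sum) matches the paper's argument.
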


\begin{proof}
The volume claim is immediate from the volume of the slab. For the area, we first note that each Type I move produces four halves of some $\ddii{ij}{n}{w_0(r)}$, as pointed out in Lemma~\ref{lem:typei}. There are as many Type I moves involved in creating $S_{H_n}(r)$ as there are copies of $\ddi{i}{n-1}{w_0(r)}$ in $S_{G_{n-1}}(r)$, namely $2^{2n}$. This gives the correct number of copies of $\ddii{ij}{n}{w_0(r)}$ assuming we can show that these half-diagrams join up in pairs to form genuine copies of $\ddii{ij}{n}{w_0(r)}$. But this follows immediately from the fact that any two adjacent half-diagrams are adjacent along an edge of the form $(u_{(n-1)1})^r$ lying along the boundary of some original $\ddi{i}{n-1}{w_0(r)}$ on $S_{G_{n-1}}(r)$. See Figure~\ref{fig:wing}.

On $S_{G_{n-1}}(r)$ there is one strip in the middle of each $\ddi{i}{n-1}{w_0(r)}$, each of which is covered by a Type I move. Each Type I move gives rise to four new strips with dimensions $r\times 1$.

Finally we have the original $\sum_{k=1}^{2n-1}2^k$ individual 2-cells from $S_{G_{n-1}}(r)$, plus one more for each Type I move. Thus the number of individual 2-cells in $S_{H_n}(r)$ is $\sum_{k=1}^{2n}2^k$.

Adding up the areas of these pieces, we obtain
$$
{\text{Area}}(S_{H_n}(r))\simeq 2^{2n+1}r^2+2^{2n+2}r+\sum_{k=1}^{2n}2^k\simeq r^2.
$$
\end{proof}

\medskip

\noindent {\it{Lower Bounds in $G_n$.}} Begin with a slab in $(F_2)^3$ of the form $\varphi^{w_{n}(r)}(a_{011})\times\varphi^{w_{n}(r)}(a_{021})\times y$, and note that the volume of this slab is $[w_{n+1}(r)]^2$. Assume by induction that a sphere $S_{H_n}(r)$ has been built upon this slab via alternating Type I and Type II moves so that $S_{H_n}(r)$ consists, as shown in the previous lemma, of
\begin{itemize}
\item $2^{2n+1}$ copies of $\ddii{ij}{n}{w_{1}(r)}$;
\item $2^{2n+2}$ strips with dimensions $r\times 1$;
\item $\sum_{k=1}^{2n}2^k$ individual 2-cells.
\end{itemize}
See Figure~\ref{fig:H1} for confirmation of these hypotheses in the case $n=1$.

Let $B_{G_n}(r)$ denote the 3-ball obtained by applying a Type I move using $u_{ni}^{-r}$ to each copy of $\ddii{ij}{n}{w_{1}(r)}$, and let $S_{G_n}(r)$ denote its boundary 2-sphere.

\begin{lemma}\label{lem:Gninduction}
The 2-sphere $S_{G_n}(r)$ consists of
\begin{itemize}
\item $2^{2n+1}$ copies of $\ddi{i}{n}{w_0(r)}$;
\item $\sum_{k=1}^{2n+1}2^k$ individual 2-cells.
\end{itemize}
Moreover, we have the following estimates:
$$
\mathrm{Vol}(B_{G_n}(r))\succeq [w_{n+1}(r)]^2\qquad\mathrm{Area}(S_{G_n}(r))\simeq w_1(r).
$$
\end{lemma}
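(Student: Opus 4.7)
The plan parallels Lemma~\ref{Hninduction}, with the Type~I moves replaced by Type~II moves. By the inductive hypothesis, $S_{H_n}(r)$ contains $2^{2n+1}$ copies of $\ddii{ij}{n}{w_1(r)}$; each such copy is a van Kampen diagram in the edge group $\langle \stable{n}{1},\ldots,\stable{n}{2^n}\rangle\times\langle \mathbf{u}_{n-1}\rangle$ underlying the coning that defines $G_n$, and so each admits an action by the appropriate stable letter $u_{ni}^{-r}$. Carrying out this Type~II move for every such diagram produces the $3$-ball $B_{G_n}(r)$. The volume lower bound is immediate since $B_{G_n}(r)$ contains the original slab, which by Lemma~\ref{lem:slab} has volume $[w_{n+1}(r)]^2$.

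For the cellular decomposition of $S_{G_n}(r)$ I would apply Lemma~\ref{lem:typeii} with $k=1$ and $W=u_{ni}^r$, noting that $W$ has length $w_0(r)=r$, together with the Remark immediately following that lemma. Each Type~II cone removes one $\ddii{ij}{n}{w_1(r)}$ from the surface and contributes four trapezoids of $\ddi{i}{n}{w_0(r)}$ together with one apex commutator $2$-cell---the latter being what remains of the original $\ddii$ after $r$ applications of $\varphi^{-1}$ have shrunk it to a single commutation relation. Tracking the adjacencies of the $\ddii$-diagrams and the $r\times 1$ strips on $S_{H_n}(r)$ inherited from the Type~I construction of Lemma~\ref{Hninduction} (and ultimately from the palindromic identity of Remark~\ref{rem:actions}), one checks that the trapezoids surviving on the exterior of the union of cones combine with the $2^{2n+2}$ pre-existing $r\times 1$ strips to form precisely $2^{2n+1}$ copies of $\ddi{i}{n}{w_0(r)}$. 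Adding the $2^{2n+1}$ new apex $2$-cells to the $\sum_{k=1}^{2n}2^k$ already present on $S_{H_n}(r)$ yields $\sum_{k=1}^{2n+1}2^k$ individual $2$-cells in total.

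The area estimate is then immediate from Lemma~\ref{lem:thetadehn}, which gives $\mathrm{Area}(\ddi{i}{n}{w_0(r)})\simeq w_1(r)$, so that
\[\mathrm{Area}(S_{G_n}(r))\;\simeq\;2^{2n+1}\,w_1(r)+\sum_{k=1}^{2n+1}2^k\;\simeq\;w_1(r).\]
The main obstacle is the combinatorial bookkeeping in the middle paragraph: one must determine precisely which trapezoids from the Type~II cones appear on the exterior of $B_{G_n}(r)$ (as opposed to being shared faces interior to the union of cones), and then verify that these exterior trapezoids, together with the persistent $r\times 1$ strips of $S_{H_n}(r)$, assemble into exactly the claimed number of $\ddi{i}{n}{w_0(r)}$-diagrams. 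This is a purely bookkeeping matter, controlled by the adjacency pattern built into the Type~I construction of Lemma~\ref{Hninduction}; it is easily verified in the small cases (cf.\ the explicit sphere $S_{G_1}(r)$ of Section~\ref{somespheres}) and propagates inductively from there.
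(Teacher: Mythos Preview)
Your proposal is correct and follows essentially the same approach as the paper's proof. Both arguments obtain the volume bound from the slab (Lemma~\ref{lem:slab}), invoke Lemma~\ref{lem:typeii} (with $k=1$, $W=u_{ni}^r$) to see that each Type~II move contributes four trapezoidal ``halves'' of $\ddi{i}{n}{w_0(r)}$ plus one apex cell, argue that adjacent halves glue in pairs across the $r\times 1$ strips inherited from $S_{H_n}(r)$ to form full $\ddi{i}{n}{w_0(r)}$ diagrams, and then tally the individual $2$-cells and the area via Lemma~\ref{lem:thetadehn}. The paper handles the combinatorial bookkeeping exactly as tersely as you do---it simply asserts that adjacent half-diagrams are adjacent across a strip from a previous Type~I move (cf.\ Figure~\ref{fig:cone})---so your deferral of this point to the small cases and the adjacency pattern of Lemma~\ref{Hninduction} is at the same level of detail as the original.
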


\begin{proof}
The volume claim is immediate from the volume of the slab. For the area, we first note that each Type II move produces four halves of some $\ddi{i}{n}{w_{0}(r)}$ (minus the equatorial strip), as pointed out in Lemma~\ref{lem:typeii}. There are as many Type II moves involved in creating $S_{G_n}(r)$ as there are copies of $\ddii{ij}{n-1}{w_{1}(r)}$ in $S_{H_{n}}(r)$, namely $2^{2n+1}$. This gives the correct number of copies of $\ddi{i}{n}{w_{0}(r)}$ assuming we can show that these half-diagrams join up in pairs across strips to form genuine copies of $\ddi{i}{n}{w_{0}(r)}$. But this follows immediately from the fact that any two adjacent half-diagrams are adjacent across a strip from some previous Type I move. See Figure~\ref{fig:cone}.

Finally we have the original $\sum_{k=1}^{2n}2^k$ individual 2-cells from $S_{H_{n}}(r)$, plus one more for each Type II move. Thus the number of individual 2-cells in $S_{G_n}(r)$ is $\sum_{k=1}^{2n+1}2^k$.

Adding up the area of these pieces, we obtain
$$
{\text{Area}}(S_{G_n}(r))\simeq 2^{2n+1}w_1(r)+\sum_{k=1}^{2n+1}2^k\simeq w_1(r).
$$
\end{proof}

\begin{remark}
Note that the constants involved in verifying lower bounds for the $\simeq$-class of $\delta^{(2)}(x)$ for $G_n$ and $H_n$ grow exponentially as functions of $n$. This is due to the exponential increase in the number of faces in the spheres created in these groups.
\end{remark}

The proof that these balls are embedded in their respective spaces follows exactly as in the base cases, Proposition~\ref{prop:lower}, because Type II balls and Type I balls with identical stable groups lie in distinct cosets of the stable group. Thus it remains only to show that the estimates obtained for these particular spheres suffice to establish the lower bounds (cf. Remark~\ref{rem:sparse}).

\begin{lemma}\label{lem:Hvol}
Let $S_{H_n}(r)$ denote the sphere defined above in $K_{H_n}$ with parameter $r$. 
For each $n$, there exist constants $A_1, A_2$ so that
$$
A_1r^2\leq \mathrm{Area}(S_{H_n}(r))\leq A_2 r^2.
$$
\end{lemma}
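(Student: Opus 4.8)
The plan is to combine the structural description of $S_{H_n}(r)$ given in Lemma~\ref{Hninduction} with the explicit area estimates already established for the constituent van Kampen diagrams. Recall that Lemma~\ref{Hninduction} asserts that $S_{H_n}(r)$ is assembled from exactly $2^{2n+1}$ copies of $\ddii{ij}{n}{w_0(r)}$, exactly $2^{2n+2}$ strips of dimensions $r\times 1$, and exactly $\sum_{k=1}^{2n}2^k$ isolated $2$-cells. First I would invoke the $k=0$ case of Lemma~\ref{lem:deltadehn}, which gives $\mathrm{Area}(\ddii{ij}{n}{w_0(r)})\simeq [w_0(r)]^2=r^2$; unwinding the $\simeq$ here (the diagram has exactly $2(2p+1)^2$ cells where $2p+1=r$, up to the palindromic adjustment) produces honest two-sided bounds $c_1 r^2\le \mathrm{Area}(\ddii{ij}{n}{w_0(r)})\le c_2 r^2$ with explicit constants. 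Each $r\times 1$ strip contributes exactly $r$ cells, and each isolated $2$-cell contributes $1$.

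Next I would simply sum: $\mathrm{Area}(S_{H_n}(r)) = \sum (\text{areas of the }\ddii{ij}{n}{w_0(r)}\text{ pieces}) + \sum(\text{strip areas}) + (\#\text{ isolated }2\text{-cells})$, which lies between $2^{2n+1}c_1 r^2 + 2^{2n+2} r + \sum_{k=1}^{2n}2^k$ and the same expression with $c_2$ in place of $c_1$. For the upper bound, absorb the linear and constant terms into the quadratic term, valid for all $r\ge 1$ since $2^{2n+2}r + \sum_{k=1}^{2n}2^k \le (2^{2n+2}+2^{2n+1})r^2$; take $A_2 = 2^{2n+1}c_2 + 2^{2n+2}+2^{2n+1}$. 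For the lower bound, the quadratic term alone already suffices: $\mathrm{Area}(S_{H_n}(r)) \ge 2^{2n+1}c_1 r^2$, so take $A_1 = 2^{2n+1}c_1$. Since the copies of $\ddii{ij}{n}{w_0(r)}$ are genuinely present (Lemma~\ref{Hninduction} verifies that the half-diagrams produced by the Type~I moves join in pairs into honest such diagrams) and the pieces listed there partition the $2$-cells of $S_{H_n}(r)$, the two sums coincide and the bounds follow.

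I do not anticipate a serious obstacle here; the only point requiring a little care is legitimately passing from the $\simeq$-statements in Lemmas~\ref{lem:deltadehn} and~\ref{lem:typei} to genuine inequalities with explicit (though $n$-dependent) constants, and checking that the decomposition in Lemma~\ref{Hninduction} is a true cell partition so that no double-counting or omission occurs. Both of these are already essentially contained in the cited lemmas and in the proof of Lemma~\ref{Hninduction}. It is worth remarking, as the preceding Remark notes, that $A_1$ and $A_2$ necessarily grow like $2^{2n}$ as functions of $n$, reflecting the doubling number of faces at each stage; this is harmless since $n$ is fixed throughout the statement. Finally, since the sequence $\{\mathrm{Area}(S_{H_n}(r))\}_{r\ge 1}$ grows quadratically it satisfies the sparseness hypothesis of Remark~\ref{rem:sparse}, so this lemma is exactly what is needed to translate the volume lower bound $\mathrm{Vol}(B_{H_n}(r))\succeq [w_n(r)]^2 \simeq \exp^{2n}(r)$ into the asserted lower bound $\dehntwo{H_n}{x}\succeq \exp^{n}(\sqrt{x})$.
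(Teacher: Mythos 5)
Your proposal is correct and follows essentially the same path as the paper: decompose $S_{H_n}(r)$ via Lemma~\ref{Hninduction} into the $2^{2n+1}$ copies of $\ddii{ij}{n}{w_0(r)}$, the $2^{2n+2}$ strips, and the isolated $2$-cells, estimate each piece, and sum. The only cosmetic difference is that the paper uses the exact equality $\mathrm{Area}(\ddii{ij}{n}{w_0(r)})=2r^2$ from the proof of Lemma~\ref{lem:deltadehn} to write a closed-form expression $2^{2n+2}r^2+2^{2n+2}r+2^{2n+1}-2$ and read off $A_1=2^{2n+2}$, $A_2=3\cdot 2^{2n+2}$, whereas you carry two-sided bounds $c_1,c_2$; both routes land in the same place.
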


\begin{proof}
We use the combinatorial structure of $S_{H_n}(r)$ described in Lemma~\ref{Hninduction}. From the proof of Lemma~\ref{lem:deltadehn}, we have that ${\text{Area}}(\ddii{ij}{n}{w_0(r)})=2r^2$, where this is an actual equality. It follows that
$$
\mathrm{Area}(S_{H_n}(r))=2^{2n+2}r^2+2^{2n+2}r+2^{2n+1}-2.
$$
It follows (since $r\geq 1$) that we may choose $A_1=2^{2n+2}$ and $A_2=3\cdot2^{2n+2}$.
\end{proof}

\begin{lemma}\label{lem:Gvol}
Let $S_{G_n}(r)$ denote the sphere defined above in $K_{G_n}$ with parameter $r$. For each $n$, there exists a constant $C>0$ so that
$$
\text{Area}(S_{G_n}(r+1))\leq C\text{Area}(S_{G_n}(r)).
$$
\end{lemma}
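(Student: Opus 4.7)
The plan is to express $\text{Area}(S_{G_n}(r))$, for each fixed $n$, as a function of $r$ that grows at a bounded multiplicative rate, a consequence of the defining formulas for $\varphi$.

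By Lemma~\ref{lem:Gninduction}, $\text{Area}(S_{G_n}(r)) = 2^{2n+1} \cdot \text{Area}(\ddi{i}{n}{w_0(r)}) + \sum_{k=1}^{2n+1} 2^k$. From the proof of Lemma~\ref{lem:thetadehn} (with $k = 0$), each $\ddi{i}{n}{w_0(r)}$ decomposes into two trapezoids, each of area exactly $f(r) := \sum_{j=0}^{r-1}|\varphi^j(\first)|$, together with a middle commutation strip whose area is bounded by a constant times $|\varphi^r(\first)| = f(r+1) - f(r)$. Consequently, for each $n$ there exist positive constants $A_n \leq B_n$ with
$$
A_n f(r) \ \leq \ \text{Area}(S_{G_n}(r)) \ \leq \ B_n f(r+1).
$$

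Next I would bound the multiplicative growth of $f$. Since $\varphi(\first) = \first\second\first$ and $\varphi(\second) = \first$, every letter expands to at most three letters under $\varphi$, so $|\varphi^{r+1}(\first)| \leq 3|\varphi^r(\first)|$. Hence
$$
f(r+2) \ = \ f(r+1) + |\varphi^{r+1}(\first)| \ \leq \ 4f(r+1),
$$
and similarly $f(r+1) \leq 4f(r)$. Combining with the area bounds yields
$$
\text{Area}(S_{G_n}(r+1)) \ \leq \ B_n f(r+2) \ \leq \ 16 B_n f(r) \ \leq \ \frac{16 B_n}{A_n}\,\text{Area}(S_{G_n}(r)),
$$
so one may take $C := 16 B_n / A_n$.

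The argument is essentially bookkeeping and presents no serious obstacle; the only care required is to correctly identify the middle commutation strip when extracting the exact bounds on $\text{Area}(\ddi{i}{n}{w_0(r)})$ from the proof of Lemma~\ref{lem:thetadehn}, but its contribution may simply be absorbed into the constant $B_n$.
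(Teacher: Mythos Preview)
Your proposal is correct and follows essentially the same approach as the paper. Both arguments bound $\text{Area}(S_{G_n}(r))$ above and below by a fixed multiple of a function of $r$ and then use the fact that $\varphi$ expands word length by at most a factor of $3$ to control the multiplicative growth; the only cosmetic difference is that the paper uses $w_1(r)=|\varphi^r(\xi)|$ as the controlling function (obtaining the explicit bound $w_1(r)\le\text{Area}(\Theta_i^n(w_0(r)))\le 3w_1(r)$ and the constant $C=18$), whereas you use the partial sum $f(r)=\sum_{j<r}|\varphi^j(\xi)|$ and leave the constants implicit.
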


\begin{proof}
The proof of Lemma~\ref{lem:thetadehn} can be refined to give the estimate
$$
w_1(r)\leq\text{Area}(\Theta_i^n(w_0(r)))\leq 3w_1(r).
$$
Combining this with the combinatorial structure of $S_{G_n}$, we then have
$$
2^{2n+1}w_1(r)+2^{2n+2}-2\leq\text{Area}(S_{G_n}(r))\leq 3\cdot 2^{2n+1}w_1(r)+2^{2n+2}-2.
$$
From this we compute
\begin{align*}
\text{Area}(S_{G_n}(r+1)) & \leq 3\cdot2^{2n+1}w_1(r+1)+2^{2n+2}-2 \\
& \leq 3\cdot 2^{2n+2}w_1(r+1) 
 =3\cdot 2^{2n+2}\frac{w_1(r+1)}{w_1(r)}w_1(r) \\
&\leq 3\cdot 2^{2n+2}\frac{w_1(r+1)}{w_1(r)}w_1(r) \\
& \leq 3\cdot 2\frac{w_1(r+1)}{w_1(r)}\text{Area}(S_{G_n}(r)).
\end{align*}
Note that the ratio $w_1(r+1)/w_1(r)$ is never larger than three ($w_1(k)$ is the length of $\varphi^k(\xi)$, and each application of $\varphi$ multiplies length by no more than three). It follows that
$$
\text{Area}(S_{G_n}(r+1))\leq 18\,\text{Area}(S_{G_n}(r))
$$
as required.
\end{proof}

We are finally able to deduce lower bounds for the 2-dimensional Dehn functions of $H_n$ and $G_n$.

\begin{prop}[Lower bounds]
For $n\geq 1$ we have $\delta_{H_n}^{(2)}(x)\succeq\exp^n(\sqrt{x})$ and $\delta_{G_n}^{(2)}(x)\succeq\exp^n(x)$.
\end{prop}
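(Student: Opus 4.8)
The plan is to combine the three ingredients assembled above: the explicit embedded ball-sphere pairs $(B_{H_n}(r),S_{H_n}(r))$ and $(B_{G_n}(r),S_{G_n}(r))$, the volume lower bounds from Lemmas~\ref{Hninduction} and~\ref{lem:Gninduction}, and the sparseness estimates from Lemmas~\ref{lem:Hvol} and~\ref{lem:Gvol}. The engine driving everything is Remark~\ref{rem:embedded}: since the balls are embedded in the contractible complexes $\widetilde{K_{H_n}}$ and $\widetilde{K_{G_n}}$, we have $\mathrm{FVol}(S_{H_n}(r))=\mathrm{Vol}(B_{H_n}(r))$ and likewise for $G_n$, so that $\delta^{(2)}_{H_n}(\mathrm{Area}(S_{H_n}(r)))\geq\mathrm{Vol}(B_{H_n}(r))$, and similarly for $G_n$.

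First I would handle $H_n$. By Lemma~\ref{Hninduction} we have $\mathrm{Area}(S_{H_n}(r))\simeq r^2$ and $\mathrm{Vol}(B_{H_n}(r))\succeq [w_n(r)]^2$, and by Lemma~\ref{lem:growth} we have $w_n(r)\simeq\exp^n(r)$, hence $[w_n(r)]^2\simeq\exp^n(r)$ (squaring an iterated exponential changes it only by a bounded factor inside the outermost exponential). The sequence $\{\mathrm{Area}(S_{H_n}(r))\}_{r=1}^\infty$ satisfies the hypotheses of Remark~\ref{rem:sparse}: by Lemma~\ref{lem:Hvol} it is sandwiched between $A_1r^2$ and $A_2r^2$, so consecutive terms have bounded ratio, and it is unbounded. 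Therefore, evaluating at $x_r=\mathrm{Area}(S_{H_n}(r))\simeq r^2$, i.e.\ $r\simeq\sqrt{x_r}$, the inequalities $\delta^{(2)}_{H_n}(x_r)\geq\mathrm{Vol}(B_{H_n}(r))\succeq\exp^n(r)\simeq\exp^n(\sqrt{x_r})$ hold along this sparse sequence, and Remark~\ref{rem:sparse} upgrades this to $\delta^{(2)}_{H_n}(x)\succeq\exp^n(\sqrt x)$.

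For $G_n$ the structure is identical but the bookkeeping is slightly more delicate, which is why Lemma~\ref{lem:Gvol} is phrased asymmetrically. Here $\mathrm{Area}(S_{G_n}(r))\simeq w_1(r)$ and $\mathrm{Vol}(B_{G_n}(r))\succeq [w_{n+1}(r)]^2\simeq w_{n+1}(r)\simeq\exp^{n+1}(r)$ by Lemma~\ref{lem:growth}. Since $w_1(r)=|\varphi^r(\first)|\simeq(1+\sqrt 2)^r$, we have $\mathrm{Area}(S_{G_n}(r))\simeq e^r$, so $r\simeq\log(\mathrm{Area}(S_{G_n}(r)))$ and hence $\exp^{n+1}(r)\simeq\exp^{n+1}(\log x)\simeq\exp^n(x)$ when evaluated at $x=\mathrm{Area}(S_{G_n}(r))$. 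Lemma~\ref{lem:Gvol} provides exactly the sparseness hypothesis of Remark~\ref{rem:sparse} ($\mathrm{Area}(S_{G_n}(r+1))\leq 18\,\mathrm{Area}(S_{G_n}(r))$, and the sequence is unbounded), so the pointwise lower bound $\delta^{(2)}_{G_n}(\mathrm{Area}(S_{G_n}(r)))\geq\mathrm{Vol}(B_{G_n}(r))\succeq\exp^n(\mathrm{Area}(S_{G_n}(r)))$ promotes to $\delta^{(2)}_{G_n}(x)\succeq\exp^n(x)$.

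The one subtlety to address carefully — flagged in Remark~\ref{rem:composition} — is that $\simeq$ does not compose well, so one cannot blindly substitute $r\simeq\sqrt x$ or $r\simeq\log x$ into a $\simeq$-estimate and expect the result to survive under a further exponential. The clean way around this is to never pass through a $\simeq$ at the level of $r$ in the exponent: work with the honest inequalities $\mathrm{Vol}(B_{H_n}(r))\geq c\,[w_n(r)]^2\geq\exp^n(c'r)$ and $\mathrm{Area}(S_{H_n}(r))\leq A_2r^2$, so that $r\geq\sqrt{\mathrm{Area}(S_{H_n}(r))/A_2}$ and hence $\mathrm{Vol}(B_{H_n}(r))\geq\exp^n(c''\sqrt{\mathrm{Area}(S_{H_n}(r))})$, which is a genuine $\succeq$-statement at each $r$; similarly for $G_n$ using the two-sided bound on $\mathrm{Area}(S_{G_n}(r))$ in terms of $w_1(r)$ from the proof of Lemma~\ref{lem:Gvol}. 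This is the step I expect to require the most care, but it is routine once one commits to tracking explicit constants rather than $\simeq$-classes; with that in hand, Remark~\ref{rem:sparse} finishes the argument and the proposition follows.
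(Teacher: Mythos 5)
Your proposal is correct and follows essentially the same route the paper intends: the paper leaves this proposition without a displayed proof precisely because it is the assembly of Remark~\ref{rem:embedded}, Lemmas~\ref{Hninduction}, \ref{lem:Gninduction}, \ref{lem:Hvol}, \ref{lem:Gvol}, and Remark~\ref{rem:sparse}, and the embeddedness argument carried over from Proposition~\ref{prop:lower}. Your final paragraph about avoiding composition of $\simeq$-estimates by tracking honest inequalities is exactly the content of the paper's Remark~\ref{rem:composition}, so you have correctly identified and discharged the one delicate point.
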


\section{Upper bounds for graphs of groups}\label{sec:upper}

In this section we show that the $2$-dimensional Dehn functions of the 
groups $G_n$ and $H_n$ are bounded above by the functions listed in 
Table~\ref{groups-table}.
In the inductive construction, each group is obtained from the 
the previous group as a multiple HNN extension.  
The upper bounds are obtained inductively using 
two general results about graphs of groups (Propositions~\ref{dehn-upper-bound} and~\ref{area-upper-bound} below). 
A version of Proposition~\ref{dehn-upper-bound} appears in \cite{pride-wang}.  We give a different proof using admissible maps.

\begin{prop}\label{dehn-upper-bound}
Let $G$ be the fundamental group of a graph of groups with the following properties:
\begin{enumerate}
\item
All the vertex groups are of type $\mathcal{F}_3$ and their $2$-dimensional Dehn functions 
are bounded above by the superadditive increasing function $f$. 
\item
All the edge groups are of  type $\mathcal{F}_2$ and their $1$-dimensional Dehn functions 
are bounded above by the superadditive increasing function $g$. 
\end{enumerate}
Then $\dehntwo{G}{x}\preceq (f \circ g)(x)$. 
\end{prop}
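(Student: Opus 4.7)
The plan is to take an admissible sphere $\sigma\colon S^2\to\widetilde{X}$ of area at most $x$, where $X$ is the $3$-dimensional $K(G,1)$ obtained as the total space of the graph of groups (with $3$-dimensional vertex spaces $X_v$ for the vertex groups, $2$-dimensional edge spaces $X_e$ for the edge groups, glued by the edge cylinders $X_e\times[0,1]$), and to produce an admissible $3$-ball extension of volume $\preceq (f\circ g)(x)$. The strategy is to decompose $\sigma$ along the edge cylinders, fill the resulting curves in the (simply connected) edge spaces using $g$, and then fill the resulting spheres in the vertex spaces using $f$.

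First I would homotope $\sigma$ (within its admissible class, using Lemma~\ref{admissible} together with a PL transversality argument, possibly after subdividing the cell structure of $\widetilde{X}$) so that the preimages of the mid-level surfaces $\widetilde{X_e}\times\{1/2\}$ form a disjoint union of circles $\gamma_1,\dots,\gamma_k\subset S^2$, each mapping into a single copy of $\widetilde{X_e}$ inside $\widetilde{X}$. Each open $2$-cell of $\sigma$ that maps to an edge-cylinder $2$-cell (one of the form $(\text{edge of }X_e)\times I$) contributes at most one edge to the $\gamma_i$, so
\[
\textstyle\sum_i|\gamma_i|\;\leq\;\mathrm{Area}(\sigma)\;\leq\;x.
\]
Since $\widetilde{X_e}$ is simply connected, each $\gamma_i$ bounds an admissible disk $D_i\subset\widetilde{X_e}$ of area at most $g(|\gamma_i|)$; by superadditivity of $g$,
\[
\textstyle\sum_i\mathrm{Area}(D_i)\;\leq\;g\bigl(\sum_i|\gamma_i|\bigr)\;\leq\;g(x).
\]

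Next I would cut $S^2$ along the $\gamma_i$, cap each new boundary circle with a copy of the appropriate $D_i$ (one copy per side, using the two ends of the relevant edge cylinder), and homotope the two halves of each edge-cylinder strip of $S^2$ into the respective adjacent vertex spaces along the collar structure $\widetilde{X_e}\times[0,1]$. The resulting pieces are admissible spheres $S_1,\dots,S_p$, each contained in a single vertex space $\widetilde{X_{v_j}}$, with
\[
\textstyle\sum_j\mathrm{Area}(S_j)\;\leq\;x+2\sum_i\mathrm{Area}(D_i)\;\leq\;x+2g(x)\;\preceq\;g(x).
\]
By hypothesis each $S_j$ bounds an admissible $3$-ball in $\widetilde{X_{v_j}}$ of volume $\leq f(\mathrm{Area}(S_j))$, and superadditivity of $f$ yields total volume $\leq f\bigl(\sum_j\mathrm{Area}(S_j)\bigr)\preceq f(g(x))$. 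Meanwhile, the $3$-dimensional regions trapped inside the edge cylinders between the original strips, the caps, and the pushed halves can be filled by the obvious product fillings in $\widetilde{X_e}\times[0,1]$, contributing additional volume on the order of $\sum_i\mathrm{Area}(D_i)\preceq g(x)$, which is absorbed into $(f\circ g)(x)$.

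The main obstacle is carrying this out \emph{admissibly} throughout. The usual transversality arguments produce generic continuous intersections but not admissible preimages, so one must either subdivide the cell structure of $\widetilde{X}$ finely enough that the mid-level surfaces, the capping disks, and the product fillings can be realized by admissible maps, or work with a purely combinatorial analogue in which one tracks, for each $2$-cell of $\sigma$ mapping to an edge-cylinder cell, the arc through its midpoint and then straightens these arcs to a disjoint union of circles on $S^2$. Once this bookkeeping is in place, the area/volume accounting reduces to the superadditivity estimates above.
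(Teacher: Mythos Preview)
Your proposal is correct and follows essentially the same route as the paper. The paper resolves the admissibility obstacle you flag at the end by invoking the transversality theorem of Buoncristiano--Rourke--Sanderson (stated as Theorem~\ref{thm:transverse}): after making $K_G$ into a transverse CW complex, any admissible $\sigma$ is homotoped rel nothing to a transverse map $\tau$ of the same area, and the resulting generalized handle decomposition of $S^2$ produces exactly the disjoint embedded circles (the ``$e$-annuli'' with their central circles) that you need. One minor bookkeeping point: when you push the capping disks $D_i$ from the mid-level $\widetilde{X_e}\times\{1/2\}$ to the two ends of the edge cylinder, the area may grow by a fixed multiplicative constant $M$ coming from the edge-inclusion maps, so the correct estimate is $\sum_j\mathrm{Area}(S_j)\le x+2M\sum_i\mathrm{Area}(D_i)$; this is of course still $\preceq g(x)$.
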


The upper bound obtained in the above proposition may not always be the optimal one.  Proposition~\ref{area-upper-bound} below gives another way to obtain an upper bound.  It relies on the existence of a function 
bounding the area-distortions of the edge groups in $G$.

\begin{prop}\label{area-upper-bound}
Let $G$ be the fundamental group of a graph of groups with the following properties:
\begin{enumerate}
\item
All the vertex groups are of type $\mathcal{F}_3$ and their $2$-dimensional Dehn functions 
are bounded above by the superadditive increasing function $f$. 
\item
All the edge groups are of  type $\mathcal{F}_2$.
\item 
There exists a function $h$ such that if $w$ is a word representing $1$ in 
an edge group $\G$, then $\area{\G}{w} \leq h(\area{G}{w})$.
\end{enumerate}
Then $\dehntwo{G}{x} \preceq f(xh(x))$.
\end{prop}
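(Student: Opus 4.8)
The plan is to follow the same general strategy as in the proof of Proposition~\ref{dehn-upper-bound}, but to use hypothesis (iii) to get a better bound on the areas of the edge-space pieces of a filling. Fix a graph of groups decomposition for $G$ with Bass--Serre tree $T$, and let $X$ be the total space of a corresponding graph of spaces, built from aspherical $3$-complexes $X_v$ for the vertex groups and aspherical $2$-complexes $X_e$ for the edge groups, with mapping cylinders $X_e \times [0,1]$ glued in along the inclusions; we may take $X$ to be an aspherical $3$-complex whose universal cover $\widetilde X$ is contractible. Given an admissible map $f\!:S^2 \to \widetilde X$ with $\text{Area}(f) \leq x$, I first want to homotope $f$ so that it is transverse to the collection of edge-spaces $\widetilde{X_e} \times \{1/2\}$, so that the preimage of this collection is a disjoint union of circles on $S^2$ (the ``tracks''), cutting $S^2$ into regions each mapping into a single vertex-space $\widetilde{X_v}$ (up to the collar directions). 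This is the standard normal-form step and is carried out exactly as in Proposition~\ref{dehn-upper-bound}.

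Next, each track $c$ is a loop mapping into some $\widetilde{X_e}$, hence represents a word $w_c$ in the edge group $\Gamma_e$, and the innermost/outermost structure of the tracks on $S^2$ organizes the regions into a rooted tree. The total length $\sum_c |w_c|$ of the tracks is bounded by a constant times $\text{Area}(f) \leq x$, since each $1$-cell of a track is a face of a $2$-cell of $f$ in the collar region. Now comes the point where hypothesis (iii) is used instead of the $1$-dimensional Dehn function of the edge group: I cap off each track loop $c$ by a minimal-area van~Kampen diagram \emph{in the ambient group $G$} (equivalently in $\widetilde X$), of area $\text{Area}_G(w_c)$; by hypothesis~(iii) this loop then also bounds a disk in $\widetilde{X_e}$ of area at most $h(\text{Area}_G(w_c))$. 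Alternatively — and this is the cleaner way to phrase it — I can first fill the whole sphere $f$ in $\widetilde X$ by \emph{some} admissible $3$-ball $g\!:B^3 \to \widetilde X$ with $\text{Vol}(g) \leq \delta^{(2)}_G(x)$... no: the honest route is to build the filling directly. For each region $R$ (mapping into a vertex space $\widetilde{X_v}$ up to collars), its boundary is a union of the adjacent track loops $w_{c_1}, \dots, w_{c_k}$; I push each $w_{c_i}$ across the collar into $\widetilde{X_v}$ and cap with disks in $\widetilde{X_e}$ of area $\leq h(\text{Area}_G(w_{c_i}))$, obtaining a $2$-sphere in $\widetilde{X_v}$ of area at most $\text{Area}(R) + \sum_i h(\text{Area}_G(w_{c_i}))$. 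Since $h$ and $f$ are increasing and each $\text{Area}_G(w_{c_i}) \leq x$ while $\sum_i \text{Area}_G(w_{c_i})$ over \emph{all} regions is controlled, superadditivity of $f$ lets me bound the total volume by $\sum_R f\bigl(\text{Area}(R) + \sum_i h(\text{Area}_G(w_{c_i}))\bigr)$. The number of tracks is at most $x$, each contributes an area at most $h(x)$, so the total area fed into the vertex fillings is at most $x + x\cdot h(x) \preceq x h(x)$ (absorbing the $\text{Area}(R)$ terms, which sum to at most $x$); applying $f$ once, via superadditivity, gives $\text{Vol}(g) \preceq f(x h(x))$, and adding the collar volumes (at most a constant times $x\cdot h(x)$, from the disks traced through the mapping cylinders) does not change the $\preceq$-class. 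Taking the sup over admissible $f$ with $\text{Area}(f)\leq x$ yields $\delta^{(2)}_G(x) \preceq f(x h(x))$.

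The main obstacle I expect is the bookkeeping in the previous paragraph: one must be careful that the ``capping'' disks in $\widetilde{X_e}$ guaranteed by hypothesis~(iii) can be glued consistently across a region (both sides of each track get a disk, and these two disks together with the collar piece form a $2$-sphere bounding a small $3$-ball in the mapping cylinder $\widetilde{X_e}\times[0,1]$), and that one does not double-count: each track is shared by two regions, so $\sum_R (\text{number of boundary tracks of } R) = 2\cdot(\text{number of tracks})$, keeping the total area input to the vertex-space fillings at $\preceq x h(x)$ rather than something larger. One also needs the elementary fact — true because $f$ is superadditive and increasing — that $\sum_j f(a_j) \leq f\bigl(\sum_j a_j\bigr)$ when there are boundedly many terms, or more simply that $\sum_j f(a_j) \leq f\bigl(\sum_j a_j\bigr)$ holds for superadditive $f$ regardless of the number of terms; this is what converts a sum of vertex-filling volumes into a single application of $f$ to the aggregate area $\preceq x h(x)$. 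Everything else is a routine adaptation of the normal-form and tree-of-regions argument already used for Proposition~\ref{dehn-upper-bound}.
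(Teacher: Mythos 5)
Your proposal is correct and takes essentially the same approach as the paper: decompose the transverse sphere along the $e$-annuli (your ``tracks''), fill each central word with a small disk in the edge space by combining hypothesis (iii) with the observation that a hemisphere of the original sphere already bounds it in $G$ with area $\leq x$, then fill the complementary vertex-space spheres using $f$ and superadditivity. The organization differs only cosmetically (you cap tracks all at once rather than via the paper's inductive gluing of slabs), and the final estimate $x h(x) + f(x + Cx h(x)) \preceq f(xh(x))$ is the same.
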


Sections~\ref{sec:labels} and ~\ref{sec:transverse} contain some preliminaries for the proofs of these propositions.  The proofs are contained in Sections~\ref{sec:strategy}-\ref{subsec:fill-vol}.  The propositions are used to prove upper bounds for $\dehntwo{G_n}x$ and $\dehntwo{H_n}x$ in Section~\ref{subsec:inductive-upper}.

\subsection{The $3$-complex $\wide{K_G}$}\label{sec:labels}
We start with the standard total space $K_G$ of the graph of spaces associated with $G$.
Recall that ${K_G}$  
is a 
quotient map 
\begin{equation}\label{eq:quotient-map}
q:\sqcup  {K_v}\, \sqcup \,(\sqcup {K_e} \times [0,1]) \to {K_G},
\end{equation}
where the $K_v$ are indexed by the vertex set of the graph and the $K_e$ are indexed by its edge set.  
Note that $q$ identifies each $K_e \times i$ (where $i=0,1$) with its image in a vertex space $K_v$ under a map induced by the edge inclusion $G_e \to G_v$. 
 
The vertices of $\wide{K_G}$ are exactly the vertices of the $\wide{K_v}$.  For $1 \leq i \leq 3$, an $i$-cell of $\wide{K_G}$ is either an $i$-cell of some $\wide{K_v}$ or of the form $c \times [0,1]$, where $c$ is an $(i-1)$-cell of some $\wide{K_e}$.  

\medskip
\noindent{{\bf Labeling on $\wide{K_G}$}.}
Each $1$-cell of $\wide{K_G}$ of the first type mentioned above is labeled by a generator of the corresponding vertex group.  All $1$-cells of the second type are labeled by the letter $e$.  

The boundary of a $2$-cell of the first type is labeled by a relation in the corresponding vertex group. A $2$-cell of the second type is homeomorphic to $E \times [0,1]$, where $E$ is an edge of some $\wide{K_e}$. Then $E$ corresponds a generator, say $g$, of the edge group $G_e$.  Let $G_u$ and $G_v$ be the vertex groups adjacent to $G_e$, and let $X$ and $Y$ be words in the generators of $G_u$ and $G_v$ respectively, which represent the images of $g$ under the edge inclusion maps.  
Then the $2$-cell $E \times [0,1]$ has boundary label $eXe^{-1}Y^{-1}$.  
Note that any boundary label that contains $e$ is of this form.

  Within each piece of $\wide{K_G}$ of the form $\wide{K_e} \times [0,1]$, we identify the slice $\wide{K_e} \times 1/2$ with $\wide{K_e}$.  
Every cell $c \times [0,1]$ in $\wide{K_e} \times [0,1]$ contains an embedded copy $c \times 1/2$ of a cell of $K_e$. If $c$ is a $1$-cell, we label 
$c \times 1/2$ by the generator of the edge group $G_e$ that labels $c$.
We use this additional cell structure and labeling in the course of the construction below (to define central words of annuli), but we do not think of these as cells of $\wide{K_G}$.

\subsection{Transverse maps}\label{sec:transverse}
In proving upper bounds for $1$-dimensional Dehn functions, one often uses the notion of an $e$-corridor or $e$-annulus in a van Kampen diagram.  To facilitate the definition of an analogous object in the present setting, we require maps $f:S^2\to \wide{K_G}$ to be \emph{transverse}.   Transversality, a condition more stringent than admissibility, gives rise to a \emph{generalized handle decomposition} of $S^2$.  We summarize a few essential facts here and refer 
to~\cite{bf} for more details. 

An \emph{index $i$ handle} of dimension $n$ is a product $\Sigma^i \times D^{n-i}$,
where $\Sigma^i$ is a compact, connected $i$-dimensional manifold with boundary, and $D^{n-i}$ is a closed disk.  A \emph{generalized handle decomposition} of an 
$n$-dimensional manifold 
$M$ is a filtration $\emptyset=M^{(-1)} \subset M^{(0)} \subset \cdots \subset M^{(n)}=M$
by codimension-zero submanifolds,
such that for each $i$, $M^{(i )}$ is obtained from $M^{(i-1)}$ by attaching finitely many index $i$ handles.  Each $i$-handle $\Sigma^i \times D^{n-i}$ is attached via an embedding $\partial \Sigma^i \times D^{n-i} \to M^{(i-1)}$.
A map 
$f$ from $M$ to
a CW complex $X$ is \emph{transverse} to the cell structure of 
$X$ if $M$ has a generalized handle decomposition such
that the restriction of $f$ to each handle is given by projection onto the second factor,
followed by the characteristic map of a cell of $X$. 
We say
that $X$ is a \emph{transverse CW complex} if the attaching map of every cell is transverse to the
cell structure of the skeleton to which it is attached.
We will need the following result of Buoncristiano, Rourke, and Sanderson.

\begin{theorem}[\cite{brsand}] \label{thm:transverse}
Let $M$ be a compact smooth manifold and $f\!:M\to X$ a continuous map into a transverse CW-complex. Suppose $f|_{\partial M}$ is transverse. Then $f$ is homotopic rel $\partial M$ to a transverse map $g:M\to X$.
\end{theorem}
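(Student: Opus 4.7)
The plan is to induct on the skeleta of $X$, at each stage invoking classical smooth transversality inside a single cell and relying on the transverse-CW hypothesis to make the handle decompositions fit together across cell boundaries. Throughout, the convention is that a $k$-cell of $X$ should acquire a preimage whose neighborhood in $M$ is a handle of index $n-k$, that is, of the form $\Sigma^{n-k} \times D^k$ with the $D^k$ factor mapping via the characteristic map of the cell.

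First I would fix a collar $\partial M \times [0,\varepsilon) \subset M$ and extend $f|_{\partial M}$ across it as the composition of projection onto $\partial M$ with $f|_{\partial M}$. Because $f|_{\partial M}$ is already transverse, this collar automatically inherits a generalized handle decomposition realizing the transversality of $f$ on it. All subsequent homotopies will be supported in $M \setminus (\partial M \times [0,\varepsilon/2))$, which automatically enforces the rel-$\partial M$ condition. A standard smoothing then makes $f$ smooth off the chosen collar, so that Thom's theorem is applicable in the interior.

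The main induction builds the handle decomposition of $M$ starting from the cells of $X$ of lowest dimension, i.e.\ from the top-index handles. For each $0$-cell $v \in X^{(0)}$, Sard's theorem lets me perturb $f$ so that $f^{-1}(v)$ is a smooth compact submanifold $\Sigma^n \subset M$ with a product neighborhood, giving an $n$-handle $\Sigma^n \times D^0$ over $v$. Inductively, suppose the transverse structure has been arranged on $f^{-1}(X^{(k-1)})$. For each open $k$-cell $e$ with characteristic map $\Phi\colon D^k \to X$, the transversality of $\Phi|_{\partial D^k}$ to $X^{(k-1)}$ supplies a product tubular-neighborhood structure on a collar of $\partial D^k$ inside $D^k$, and this pulls back via $f$ to a product neighborhood of the already-constructed portion of the handle decomposition in $M$. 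Thom transversality, applied relative to this boundary structure on the interior of $f^{-1}(e)$, then lets one perturb $f$ so that the preimage of $e$ becomes a codimension-$k$ smooth submanifold with a tubular neighborhood in $M$ of the required form $\Sigma^{n-k} \times D^k$, with the disk factor mapping via $\Phi$.

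The main obstacle is ensuring that the handle structures built over successive skeleta match up consistently; this is exactly the purpose of the transverse-CW hypothesis. Without it, the product neighborhood of a handle of index $>n-k$ constructed at an earlier stage would fail to match the product structure required normal to a $k$-cell, since the attaching maps would not carry handle-product neighborhoods of their sources to handle-product neighborhoods in $X^{(k-1)}$. With transversality of the attaching maps in hand, the inductive step reduces to a relative version of Thom transversality, and the resulting map $g$ carries a global generalized handle decomposition realizing transversality over the entire complex $X$.
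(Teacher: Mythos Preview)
The paper does not prove Theorem~\ref{thm:transverse}; it is quoted from Buoncristiano--Rourke--Sanderson and used as a black box, so there is no in-paper argument to compare against.

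Your outline is broadly in the spirit of the original argument---induct over the cells of $X$, use differential-topological transversality inside each open cell, and rely on the transverse-CW hypothesis to make the handle structures match across skeleta---but there is a real problem with the direction of your induction. Starting from the $0$-cells and invoking Sard to make $f^{-1}(v)$ a smooth codimension-$0$ submanifold does not make sense: Sard/Thom transversality is a statement about maps into positive-dimensional manifolds, and an isolated $0$-cell gives you nothing to be transverse to. The standard argument runs in the opposite direction. One first homotopes $f$ into $X^{(n)}$ by cellular approximation; then, for each $n$-cell $e$, one makes $f$ transverse to the centre point of $e$ (now a legitimate application of Sard inside the open $n$-disk), so that $f^{-1}(\mathrm{centre})$ is a codimension-$n$ submanifold with a $D^n$ tubular neighbourhood---these are the index-$0$ handles. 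Excising them and radially pushing what remains into $X^{(n-1)}$ sets up the next step, and one iterates downward. The $0$-cells are handled only at the end, when everything left already maps into $X^{(0)}$ and the remaining components become the index-$n$ handles automatically.

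A related imprecision: in your inductive step you write that ``the preimage of $e$ becomes a codimension-$k$ smooth submanifold.'' The preimage of an open $k$-cell is an open subset of $M$, not a codimension-$k$ submanifold; what has codimension $k$ is the preimage of the \emph{centre} of $e$, which becomes the core $\Sigma^{n-k}\times\{0\}$ of the handle, with the full preimage of $e$ being its open tubular neighbourhood.
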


In order to apply this theorem, one needs a transverse complex.  
The complex $K_G$ defined above can be made transverse by inductively applying the theorem to attaching maps of cells.  This procedure can be done in way that preserves 
the homeomorphism type of the complex and its partition into cells. (See Section 3 
of~\cite{bf} for the details of this procedure.)

Lastly, if one applies the theorem to an admissible map to make it transverse, its combinatorial volume does not change.  (This is because the preimages of $n$-cells do not change during the course of the homotopy, except possibly by shrinking slightly)

\medskip
\noindent{\bf Existence of $e$-annuli.}
We are concerned with transverse maps $\tau : S^2 \to \wide{K_G}$.  Such a map induces a  decomposition of $S^2$ into handles of index $0$, $1$, and $2$, and each $i$-handle maps to a $(2-i)$-dimensional cell of $\wide{K_G}$. 
Each $1$-handle inherits the label of the $1$-cell that it maps onto.  If there is a $1$-handle labeled $e$,
there are two possibilities:
either the $1$-handle is of the form $S^1 \times I$ (an annulus) or it is adjacent to a $0$-handle.  In the latter case, the
$0$-handle is mapped homeomorphically to a $2$-cell in $K_G$ whose boundary is labeled by
a word of the form $eXe^{-1}Y^{-1}$, and is therefore adjacent to exactly one other $1$-handle labeled $e$.  
This new $1$-handle is, in turn, adjacent to another $0$-handle.  Continuing this process, we obtain a finite concatenation of alternating $1$-handles (labeled $e$) and $0$-handles, which 
forms an annulus.
In either case, we call the annulus an \emph{$e$-annulus}.  Note that an $e$-annulus can consist of a single $1$-handle of the form $S^1 \times I$.  

The only handles adjacent to an $e$-annulus are $1$-handles 
that are not labeled $e$ and $2$-handles.  Thus $e$-annuli never intersect each other.  (In particular, such an annulus never intersects itself.)  

\medskip
\noindent{\bf The central and boundary words of an $e$-annulus.}
Any $0$-handle in an $e$-annulus is mapped homeomorphically to a $2$-cell of the form $E \times [0,1]$, 
where $E$ is an edge in an edge space, say $K_{\epsilon}$, 
and $E \times 1/2$ is labeled by a generator of 
the edge group $G_{\epsilon}$. 
Concatenating such labels from successive $0$-handles, we obtain the \emph{central word} of the $e$-annulus. We think of the central word as labeling a \emph{central circle} $C$ that runs through the interior of the annulus. 
The circle $C$ is simply the union of the segments  
$[0,1] \times \{1/2\}$ through the center of each $1$-handle of the annulus and the curves $\tau^{-1}(E \times 1/2)$ in each $0$-handle.  
Note that $\tau(C) \subset K_{\epsilon} \times 1/2$.

The boundary of the $2$-cell is labeled by $eXe^{-1}Y^{-1}$, where
$X$ and $Y$ are words in the generators of the two adjacent vertex groups.  Concatenating such words from successive $0$-handles in the $e$-annulus gives the two \emph{boundary words} of the annulus.

\subsection{General strategy for the proofs}\label{sec:strategy}
We assume that $\wide{K_G}$ has been made transverse by applying the procedure mentioned after Theorem~\ref{thm:transverse}.  
  Given any admissible map $\sigma: S^2 \to \widetilde{K_G}^{(2)}$ with $\mathrm{Area}(\sigma) =x$, we construct an admissible filling $\bar \sigma: D^3 \ra \widetilde{K_G}$ whose volume is bounded above by a function $\simeq$-equivalent to $f( g (x))$ (for Proposition~\ref{dehn-upper-bound}) or $f(xh(x))$ (for Proposition~\ref{area-upper-bound}).

To construct the admissible filling $\bar \sigma$,
we think of $D^3$ as a cone $S^2 \times [0,1]/ S^2\times 1$ and decompose it into two pieces, $A=S^2 \times [0,1/2]$ and a closed ball $B=S^2 \times [1/2, 1]/ S^2 \times 1$.
It follows from Theorem~\ref{thm:transverse} that
there is a homotopy $\Psi: S^{2} \times [0,1/2]\to \wide{K_{G}}^{(2)}$ such that $\Psi_{0}=\sigma$, and $\Psi_{1/2}= \tau$ is a transverse map with $\mathrm{Area}(\tau) = \mathrm{Area}(\sigma)$. 
In Section~\ref{subsec:filling} we construct an admissible filling $\bar \tau: B \to \wide{K_G}$. 
Then 
\begin{equation}\label{eq:sigmabar}
\bar \sigma =\begin{cases} \Psi & \text{ on }A \\
\bar \tau& \text{ on }B\end{cases}
\end{equation}
(See Figure~\ref{fig:fill}.)  Note that $\bar \sigma$ is admissible and that $\vol{\bar \sigma}=\vol{\bar \tau}$.

To construct the filling $\bar \tau$ we decompose the ball $B$ into two families of balls with disjoint interiors.  The first family consists of 
``slabs'' homeomorphic to $D^2 \times I$.  The lateral boundary $\partial D^2 \times I$ of each slab lies on $\partial B$. The space formed by deleting the interiors and lateral boundaries (i.e.~$D^2 \times (0,1)$) of the slabs is a disjoint union of balls, which make up the second family.  We construct admissible maps on each of the component balls in the two families in such a way that they agree on common boundaries and agree with $\tau$ on $\partial B$.

Under the standard map from $\wide{K_G}$ to the associated Bass-Serre tree, 
the image in $\wide{K_G}$ of a slab from the first family above maps to an edge of the tree, while the image of a ball from the second family maps to a vertex of the tree. 
The volume of 
a slab is related to the areas of its boundary disks $D^2 \times 0$ and 
$D^2 \times 1$.   
The volume of a ball of the second type is controlled using the $2$-dimensional Dehn function of the corresponding vertex group.  The details of the volume estimates are given in Section~\ref{subsec:fill-vol}.

\begin{figure}
\psfrag{A}{{\small $A$}}
\psfrag{A1}{{\small $A_1$}}
\psfrag{B}{{\small $B$}}
\psfrag{dB}{{\small $\partial B$}}
\psfrag{t}{{\small $\bar{\tau}$}}
\psfrag{D}{{\small $D^2\times [0,1]$}}
\psfrag{ps}{{\small $\Psi$}}
\begin{center}
\includegraphics[width=110mm]{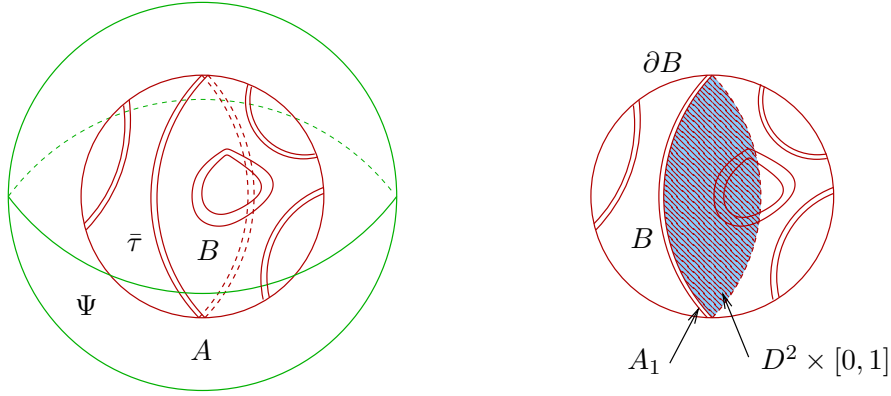}
\end{center}
\caption{Filling the Admissible map $\sigma: S^2 \to \widetilde{K_G}$. \label{fig:fill}}
\end{figure}

\subsection{The admissible filling $\bar \tau$} \label{subsec:filling} 
An admissible filling of the transverse map $\tau: \partial B \to \wide{K_G}^{(2)}$ is constructed as follows.
If $\partial B$ has no $e$-annuli, the image of $\tau$ lies completely in one of the ($3$-dimensional) vertex spaces $K_v$.  Since $f$ is an upper bound for the $2$-dimensional Dehn functions of the vertex groups (in both propositions), there exists an admissible filling $\bar \tau:B \to \wide{K_v}$, with $\vol{\bar \tau} \preceq f(\area{}{\tau})$.

If $\partial B$ has $e$-annuli, then $\bar \tau$ is constructed inductively. 
The induction is on the number of $e$-annuli.  
If $A_1$ is an $e$-annulus, form a new space $B_1$, by 
gluing a slab $D^2 \times [0,1]$ to $\partial B$ along $A_1$, i.e.  
\[
B_1 = \partial B \sqcup (D^2 \times [0,1]) \; /\;    A_1 \sim (\partial D^2 \times [0,1]).
\]

If $A_1$ consists of a single $1$-handle, then it is homeomorphic to $S^1 \times [0,1]$. With this identification, the restriction of $\tau$ to $A_1$ is simply projection onto the second factor, followed by the characteristic map of a $1$-cell. 
Then $\tau$ extends to a map on $D^2 \times [0,1]$ whose image is the same $1$-cell. (The map is constant on each disk $D^2 \times t$.)  
This defines an admissible map $\tau_1 :B_1 \to \wide{K_G}$

Otherwise let $w_1$ and $C_1$ be the central word and central circle of $A_1$. Now $\tau|_{C_1}$ is an admissible map into some $\wide{K_{\epsilon}}$.  The circle $C_1$ divides $\partial B$ into two components, and the restriction of $\tau$ to the closure of one of these components is  
a filling of $\tau|_{C_1}$.  Thus $w_1$ represents the identity in $G$ (and hence in the edge group $G_{\epsilon}$).  It follows that $w_1$ can be filled in $\wide{K_{\epsilon}}$, i.e., 
there is an admissible map $D^2 \to \wide{K_{\epsilon}}$ that agrees with $\tau|_{C_1}$ on $\partial D^2$. 
Extend this to a map $\eta: D^2 \times [0,1]\to \wide{K_{\epsilon}} \times [0,1]$ (by defining it to be the identity on the second factor).  
Restrict the quotient map $q$ from Section~\ref{sec:labels} 
to $K_{\epsilon} \times [0,1]$ and let $\tilde q$ denote the unique lift to universal covers so that $\tilde q \circ \eta$ agrees with $\tau$ on $A_1$.

Now define $\tau_1: B_1 \to \wide K_G$ by 
\[ 
 \tau_1 = \begin{cases} \tau  \text{ on } \partial B \\
 						\tilde q \circ \eta \text{ on } D^2 \times I
\end{cases}.
\]
Note that $\tau_1$ is an admissible map of $B_1$ into $\wide{K_G}$
(i.e. $\tau_1$ restricted to a component of the inverse image of an open $3$-cell is a homeomorphism). Moreover, the restrictions of $\tau_1$ to the boundary disks $D^2 \times i$, where $i=0,1$, are admissible maps into $\wide{K_G}^{(2)}$. (In fact the boundary disks are mapped into the $2$-skeletons of vertex spaces.)

The complement $B_1 \setminus D^2 \times (0,1)$ is a disjoint union of two spheres, which we call the \emph{complementary} spheres.  The restrictions of $\tau_1$ to the complementary spheres are admissible maps into $\wide{K_G}^{(2)}$. 
Note that even though $\tau$ was a transverse map, we do not require $\tau_1$ to be transverse.  The transversality of $\tau$ was only used to conclude that the
$e$-annuli on $\partial B$ are all well-defined, embedded, and mutually 
disjoint.  This is clearly the case for the restrictions of $\tau_1$ to each of
the complementary spheres.  Note that,  
since the boundary disks $D^2 \times i$ do not contain any $2$-cells labeled $e$, no new $e$-annuli are added in the course of the above procedure.
Now we repeat the procedure with $\tau$ replaced by each of the restrictions of $\tau_1$ to the complementary spheres.

Let $N$ be the total number of $e$-annuli on $\partial B$.  By induction we obtain
an admissible map $\tau_N:B_N \to \wide{K_G}$, and 
complementary spheres $S_1, \dots S_{N+1}$ (since in each step one of the complementary spheres is divided into two).  
Let $\tau_{Ni}$ denote the restriction of $\tau_N$ to $S_i$.
Then $\tau_{Ni}$ is an admissible map into the $2$-skeleton of some vertex space.  
 
As before, for each $i$, there is an admissible extension $\bar \tau_{Ni}$ of $\tau_{Ni}$ defined on $D^3_i$, a $3$-dimensional ball with boundary $S_i$, 
such 
that $\vol{\bar \tau_{Ni}} \preceq f(\area{}{\tau_{Ni}})$. 
Note that the ball $\displaystyle B= \cup_{i=1}^{N+1} D^3_i \cup B_N$, and 
define 
$\bar \tau: B \to \wide{K_G}$ by
$$ \bar \tau = \begin{cases} 
				\tau_N \text{ on } B_N \\
				\bar \tau_{Ni} \text{ on } D^3_i, \text{ for } 1 \leq i \leq N+1
\end{cases}
$$
Note that $\bar \tau$ is an admissible extension of $\tau$.  Finally, the admissible filling $\bar \sigma$ of the original map $\sigma$ is defined as in Equation~\ref{eq:sigmabar}.

\subsection{Filling volumes in graphs of groups}\label{subsec:fill-vol}
Let $A_1, \dots, A_N$, $w_1, \dots, w_N$ and 
$\wide{K_{e_1}}, \dots, \wide{K_{e_N}}$ denote the $e$-annuli on $\partial B$, their central words and the corresponding edge spaces respectively.  
For any $A_i$, the restriction of $\tau_N$ to the slab that it bounds has volume equal to $\area{K_{e_i}}{w_i}$. (If $A_i$ is an $e$-annulus of the form $S^1 \times [0,1]$ for some $i$, then $w_i$ is empty and 
$\area{K_{e_i}}{w_i}=0$.)

There exists a constant $M$, which depends only on $G$, such that if $h$ is a boundary word of one of the $e$-annuli, say $A_i$, (so that $h$ is a word in the generators of a vertex group, say $G_v$), then $\area{K_v}{h}\leq M \area{K_{e_i}}{w_i}$. This ensures that at the $i$th step of the procedure, (i.e., when a slab is glued in along $A_i$ to form $B_i$), the \emph{total}
area of the complementary spheres increases by at most $2M \area{K_{e_i}}{w_i}$.
Using the superadditivity of $f$, and the fact that the
 fillings $\bar \tau_{Ni}$ of the complementary spheres were chosen so that 
 $\vol{\bar \tau_{Ni}} \leq f(\area{}\tau_{Ni})$, 
we have 
\begin{equation}\label{volume-estimate}
\begin{aligned}
\vol {\bar \sigma} & = \vol{\bar \tau}\\
&= \text{ Total volume of slabs } + \text{ Total 
volume of balls }\\
&\leq  \sum_{i=1}^N \area{K_{e_i}}{w_{i}} + \sum_{i=1}^{N+1} f(\area{}{\tau_{Ni}})\\
&\leq \sum_{i=1}^N \area{K_{e_i}}{w_i} + f\left(\sum_{i=1}^{N+1} \area{}{\tau_{Ni}}\right)\\
&\leq \sum _{i=1}^N\area{K_{e_i}}{w_i} + f\left(\area{}{\tau}+ 2M\sum_{i=1}^N \area{K_{e_i}}{w_{i}}\right) 
\end{aligned}
\end{equation}

We are now ready to complete the proofs of Propositions~\ref{dehn-upper-bound} and~\ref{area-upper-bound}.  
We retain the notation developed above. 
 
\begin{proof}[Proof of Proposition~\ref{dehn-upper-bound}] 
Let $\sigma: S^2 \ra \wide{K_G}^{(2)}$ be an admissible map with area $x$.  We obtain an admissible 
filling $\bar \sigma: D^3 \ra \wide{K_G}$ as described in the procedure above. 
Equation \eqref{volume-estimate} estimates the volume of this filling. 
By the assumption on the $1$-dimensional Dehn functions of the edge groups,
we have that $\area{K_{e_i}}{w_{A_i}} \leq g(|w_{A_i}|)$.
So by the superadditivity of $g$, we have $\sum \area{K_{e_i}}{w_{A_i}} \leq g(\sum |w_{A_i}|) \leq 
g(x)$,
since the total lengths of central words of annuli cannot be more than the total area of $\sigma$. 
So the estimate for the volume is
$
\vol{\bar \sigma} \leq g(x) +f(x+2Mg(x)).
$
This gives $$\dehntwo{G}{x} \preceq g(x) +f(x+2M g(x)).$$
However, since $x \preceq f(x)$, and $x \preceq g(x)$, we have 
$g(x)+f(x+2Mg(x)) \preceq f(g(x))$.  This is the required upper bound. 
\end{proof}

\begin{proof}[Proof of Proposition~\ref{area-upper-bound}]
As in the previous proof, let $\sigma: S^2 \ra \wide{K_G}^{(2)}$ be an admissible map 
with area $x$
and let $\bar \sigma$ be the admissible filling from the procedure above.  This time, we have the 
condition $\area{K_{e_i}}{w_{A_i}} \leq h(\area{K_G}{w_{A_i}})$ for each $i$. 
The restriction of $\tau$ to a ``hemispherical'' piece of $\partial B$ gives a filling of $w_{A_i}$
of size at most $x$.  
Since there are at most $x$ annuli, 
the estimate in Equation~\ref{volume-estimate} becomes
\begin{align*}
\vol{\bar \sigma} &\leq \sum_{i+1}^N \area{K_{e_i}}{w_{A_i}} + 
f\left(x+ 2M\sum_{i+1}^N \area{K_{e_i}}{w_{A_i}}\right) \\
&\leq xh(x) + f(x + 2M xh(x)).
\end{align*}
This give $\dehntwo{G}{x} \preceq f(xh(x))$.
\end{proof}

\subsection{Upper bounds for the super-exponential examples} 
\label{subsec:inductive-upper}
The proof of the upper bounds for the super-exponential examples is by induction.  We have an alternating sequence of groups
\[
H_{0} < G_0 < H_1 < G_1 < H_2 < G_2 \cdots
\]
where $H_{0}=F_2 \times F_2$.  
(Note that this is different from the $H_0$ defined in Table~\ref{groups-table}.  In Section~\ref{sec:group-definitions}, example~\ref{ex:G0}, the group $G_0$ was obtained from $F_2\times 
F_2 \times F_2$ by coning over $F_2 \times F_2$.  This can also be viewed as a multiple
($4$-fold) HNN extension with base $F_2 \times F_2=H_{0}$, where two stable letters act via the identity and two act via $\varphi \times \varphi$.)

All of the above groups are of type $\mathcal{F}_3$. Apart from $H_{0}$, they all have $3$-dimensional $K(\pi,1)$'s.
As described in Section~\ref{sec:inductive-group-defs}, each group in the sequence is a multiple HNN extension of the previous one.

\begin{proof}[Proof of upper bounds for $\delta^{(2)}_{H_n}$ and $\delta^{(2)}_{G_n}$]
Since $F_2 \times F_2$ has a $2$-dimensional $K(\pi, 1)$ we have 
$\dehntwo{H_{0}}{x} = \dehntwo{F_2 \times F_2}{x} \simeq x$.  
This starts the induction.  

\smallskip

{\it Step 1. Deducing $\dehntwo{G_n}{x}$ upper bounds from $\dehntwo{H_{n}}{x}$ upper bounds ($n\geq 0$).}
This is a straightforward application of Proposition~\ref{dehn-upper-bound}.  The group $G_n$ is the fundamental group of a graph of groups where the underlying graph is a bouquet of two circles (four if $n=0$), the vertex group is $H_{n}$ and the edge groups are $F_{2^{n+1}} \times F_2$.  Further,

\begin{enumerate}
\item $\dehntwo{H_{0}}{x} \simeq x$ (base case) and 
$\dehntwo{H_{n}}{x}\simeq \exp^{n}(\sqrt x)$ for $n>0$ (induction hypothesis).

\smallskip

\item  $\dehnone{F_{2^{n+1}} \times F_2}x \simeq x^2$.
\end{enumerate}
Proposition~\ref{dehn-upper-bound} now implies that $\dehntwo{G_0}{x}\preceq x^2$ and
$\dehntwo{G_n}x \preceq \exp^n(x)$ for $n>0$.

\smallskip

{\it Step 2. Deducing $\dehntwo{H_n}x$ upper bounds from $\dehntwo{G_{n-1}}x$ upper bounds ($n > 0$). }  
This is more involved than the previous step.  The group $H_n$ is the fundamental group of a graph of groups where the underlying graph is a 
wedge of $2^{n+1}$ circles, the vertex group is $G_{n-1}$ and the edge groups are all isomorphic to
$F_2 \rtimes F_4$.  The result will follow from Proposition~\ref{area-upper-bound} together with 
the following lemma, which describes how areas in the edge groups get distorted in $H_n$.  

\begin{lemma}[Area-distortion of $\G$ in $H_n$]\label{lem:area-distortion}
Let $\G$ be an edge group in the graph of groups description of $H_n$.  
Then there exists a constant $\beta_n$, which depends only on $n$, such that for any word $w$ in the generators of $\Gamma$ that represents $1$, 
\begin{equation}\label{eq:area-distortion}
\area{\G}{w} \leq (\beta_n \area{H_n}{w}e^{\sqrt{\beta_n\area{H_n}{w}}})^2.
\end{equation}
\end{lemma}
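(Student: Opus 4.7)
The plan is to take a minimum-area van Kampen diagram $D$ for $w$ in $\widetilde{K_{H_n}}$ with $\mathrm{Area}(D)=A:=\area{H_n}{w}$, and to transform it into a van Kampen diagram for $w$ in $\widetilde{K_{\G}}$ by a sequence of corridor surgeries, controlling the area growth at each stage. Throughout, I identify the abstract edge group $\G$ with its realized copy in $H_n$ as $\G=F_2\rtimes_\theta F_4$.

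\emph{Step 1 (Corridor structure of $D$).} Because $\G\subseteq G_{n-1}$ and $\partial D=w$ uses only generators of $\G$, the word $w$ contains none of the stable letters $\mathbf{a}_{ni}$ introduced in passing from $G_{n-1}$ to $H_n$. Standard corridor arguments (cf.~Section~\ref{sec:transverse}) then imply that every $\mathbf{a}_{ni}$-corridor in $D$ is a closed annulus, that distinct such annuli are disjoint, and that the total number of 2-cells in all these annuli, hence the total length of their central words, is at most $A$.

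\emph{Step 2 (Peeling the outer HNN layer).} Process the annuli innermost-first. For each innermost $\mathbf{a}_{ni}$-annulus, the enclosed region is a sub-disk whose boundary is the central word $v_i$ of the annulus. The word $v_i$ lies in, and represents $1$ in, the corresponding edge group $\G_i\cong F_2\rtimes_\theta F_4$ of $H_n$. Delete the annulus and its interior and replace them by an explicit filling of $v_i$ inside $\G_i$; iterate. After all annuli are processed, an analogous peeling of the generators of $G_{n-1}$ that do not lie in $\G$ (done inductively via the recursive structure of the $H_j$ and $G_j$) leaves a diagram entirely in $\widetilde{K_{\G}}$.

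\emph{Step 3 (Fillings in $F_2\rtimes_\theta F_4$).} Any word $v$ of length $\ell$ representing $1$ in $\G$ admits the following filling. Decompose $v=g\cdot t$ with $g\in F_2$ and $t\in F_4$; then $g=t=1$. The $F_4$-projection $t$ fills freely in at most $\ell$ cells. Collecting the $F_2$-letters of $v$ to the left uses commutations with the trivially-acting generators and HNN-type conjugations with the $\mathbf{u}_{n-1}$-generators. If $d$ denotes the maximum number of $\mathbf{u}_{n-1}$-letters preceding any $F_2$-letter in $v$, the cost is at most $C\ell^{2}|\varphi^{d}(\first)|$ cells, and the resulting collected word $g$ has length at most $C\ell\,|\varphi^{d}(\first)|\leq C'\ell\, e^{d}$; filling $g=1$ in the free group $F_2$ then uses $|g|$ additional cells. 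Altogether,
\[
\area{\G}{v}\;\leq\;C\,\ell^{2}\,e^{d}.
\]

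\emph{Step 4 (Bounding the $\varphi$-depth via the $H_n$-disk).} This is the crux: one must show that the $\varphi$-depth $d_i$ of each central word $v_i$ extracted from $D$ satisfies $d_i^{2}\leq\beta_n A$ for a constant $\beta_n$ depending only on $n$. The key point is that inside an $\mathbf{a}_{ni}$-annulus, each $\mathbf{u}_{n-1}$-letter on one boundary forces a matching $\mathbf{u}_{n-1}$-letter on the other boundary, via the commutation relation $[\mathbf{u}_{n-1},\mathbf{a}_{ni}]=1$ that holds in the edge group. This produces a full $\mathbf{u}_{n-1}$-corridor spanning the annulus, and a careful count using the exponential growth of $\varphi$ shows that the area contribution of such crossings is at least quadratic in $d_i$, whence $d_i^{2}\leq\beta_n A$.

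\emph{Step 5 (Assembly).} Substituting $d_i\leq\sqrt{\beta_n A}$ into Step~3 and summing over all annuli, whose central-word lengths satisfy $\sum_i|v_i|\leq A$ and hence $\sum_i|v_i|^{2}\leq A^{2}$, we obtain
\[
\area{\G}{w}\;\leq\;C\,e^{\sqrt{\beta_n A}}\sum_i|v_i|^{2}\;\leq\;C\,A^{2}\,e^{\sqrt{\beta_n A}},
\]
which, after absorbing the constant $C$ into $\beta_n$, yields $\area{\G}{w}\leq(\beta_n\,A\,e^{\sqrt{\beta_n A}})^{2}$, as required.

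\emph{Main obstacle.} The principal difficulty is Step~4:  justifying the quadratic relation $d_i^{2}\leq\beta_n A$ between the $\varphi$-depth of a central word and the area of the enclosing $H_n$-disk. This demands a precise combinatorial analysis of how $\mathbf{u}_{n-1}$-corridors (arising from the HNN step $H_{n-1}\to G_{n-1}$) interleave with the $\mathbf{a}_{ni}$-annuli in $D$, exploiting the particular commutation relations that define the edge group structure; it is this step that converts the naive exponent $A$ into the much smaller exponent $\sqrt{A}$ in the final bound.
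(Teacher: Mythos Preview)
Your proposal has a genuine gap: Step~4, which you yourself flag as the ``main obstacle,'' is not proved. The one-line sketch (``a careful count using the exponential growth of $\varphi$ shows that the area contribution of such crossings is at least quadratic in $d_i$'') does not explain why the count is quadratic rather than linear in $d_i$; a single $\mathbf{u}_{n-1}$-corridor crossing an $\mathbf{a}_{ni}$-annulus contributes only boundedly many cells, so nesting depth $d_i$ by itself yields only an $O(d_i)$ lower bound. Getting $d_i^2$ requires an additional mechanism you have not supplied. Step~2 is also incomplete: after removing the $\mathbf{a}_{ni}$-annuli you are left with a diagram over $G_{n-1}$, and ``analogous peeling \ldots done inductively via the recursive structure'' hides all the work of converting that to a diagram over the specific subgroup $\Gamma$.

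The paper's approach is structurally different and avoids both issues. Rather than peeling $\mathbf{a}_{ni}$-annuli, it works directly with $\mathbf{u}_{n-1}$-corridors in $D$ (these are genuine corridors, not annuli, since $\mathbf{u}_{n-1}$ is a generator of $\Gamma$ and hence appears in $w$). A single lemma, proved via Britton's Lemma, converts each such corridor to a corridor over $\Gamma$ at exponential cost in length; this replaces your recursive peeling in one stroke. The complementary regions are then filled by quadratic $F_2\times F_2$ diagrams, giving $\area{}{\Delta}\leq (|w|L)^2$ where $L$ is the maximal $\Gamma$-corridor length. The quadratic bound $(\log L)^2\preceq A$ comes from a \emph{level-structure} argument: one stratifies the $\mathbf{u}$-corridors of $\Delta$ by combinatorial distance from a longest one, proves an iterated scaling inequality $|\topbound{i-1}|\leq 3|\topbound{i}|+|\deltabound{i}|$, and deduces that if $|w|\leq(\log_3 L)^2$ then there must be at least $\sim\log L$ levels, each of total length $\geq\sqrt{L}$; transferring these back to $D$ gives $\sim\log L$ levels each of length $\sim\log L$, hence area $\gtrsim(\log L)^2$. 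This level-counting is the actual source of the square root, and nothing in your Step~4 sketch corresponds to it.
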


\noindent
Section~\ref{sec:area-distortion} is devoted to the proof of this lemma.  We now have:
\begin{enumerate}
\item $\dehntwo{G_0}x \preceq x^2$ (Step 1) and $\dehntwo{G_{n-1}}{x} \preceq \exp^{n-1}(x)$ 
for $n>1$ (induction hypothesis)

\smallskip

\item
By Lemma~\ref{lem:area-distortion}, the function $h(x)=(\beta_n xe^{\sqrt{ \beta_n  x}})^2$ satisfies the third condition in the statement of Proposition~\ref{area-upper-bound}.
\end{enumerate}
Proposition~\ref{area-upper-bound} and the fact that $h(x)\simeq e^{\sqrt x}$ now imply that 
$\dehntwo{H_1}x \preceq {(xe^{\sqrt x})}^2$ and
$\dehntwo{H_n}x \preceq \exp^{n-1}(xe^{\sqrt x})$ for $n>1$. 
Since $x e^{\sqrt x}  \simeq e^{\sqrt x} $ and $({e^{\sqrt x})}^2 \simeq e^{\sqrt x}$, we have 
$\dehntwo{H_n}x \preceq \exp^{n}(\sqrt x)$ for all $n\geq 1$. 

This completes the proof of the upper bounds for the super-exponential examples.  
\end{proof}

\section{Proof of the area-distortion lemma}\label{sec:area-distortion}
This section provides a detailed proof of Lemma~\ref{lem:area-distortion}.  
Let $\Gamma$ be any of the $2^n$ groups isomorphic to $F_2 \rtimes_\theta F_4$ listed in the row corresponding to $H_n$ in Table~\ref{groups-table}.   
In this section we will use the following notation for $\Gamma$:
$$
\Gamma = \langle {\bf a} \rangle \rtimes_\theta \langle {{\bf u}}, {\bf b} \rangle
$$ 
Here ${\bf u}= {\bf u}_{n-1}$. (Note that each of the $2^n$ edge groups has $\langle {\bf u}_{n-1} \rangle$ as a subgroup.) Either ${\bf a} = \stable{(n-1)}i$ with ${\bf b} = \mathcal{L}(i)$ or ${\bf a} = {\bf u}_{n-2}^{-1} \stable{(n-1)}i$ with ${\bf b} = \mathcal{L}(i+2^{n-1})$ for some $1 \leq i \leq 2^{n-1}$. 
The $F_4$ generated by ${\bf u}$ and ${ \bf b}$ acts on $\langle {\bf a} \rangle$ via $\theta$, as defined in Definition~\ref{def:suspend}.

As we are concerned only with areas in this section, we work with van Kampen diagrams, rather than admissible or transverse maps, and combinatorial complexes rather than transverse ones.   
Since we are working with fixed presentations, we will use the phrase ``van Kampen diagram over $G$'' to mean ``van Kampen diagram over the fixed presentation for $G$.'' 

\subsection{Strategy} Let $w$ be a word in the generators of $\Gamma$ that represents $1$.  Then there exists a van Kampen diagram $D$ for $w$ over $H_n$, which is area-minimizing, i.e. $\area{}{D} = \area{H_n}{w}$. 
First realize $D$ as a union of ${\bf u}$-corridors and complementary
regions.  Next, use this structure to produce a van Kampen diagram $\Delta$ for $w$ over $\Gamma$, which has the same combinatorial decomposition into ${\bf u}$-corridors and complementary regions. 
Finally show that $\area{}{\Delta}$, and hence $\area{\Gamma}{w}$, is bounded above by the quantity on the right hand side of inequality~(\ref{eq:area-distortion}).

\begin{remark}\label{rmk:nonsing}
For the rest of this section, we assume that the van Kampen diagram $D$ has the property that every edge on the boundary of $D$ belongs to a $2$-cell in $D$.  We can restrict to this case using the superadditivity of the function
$(Cx 3^{\sqrt{Cx}})^2$.
\end{remark}

\begin{remark}
Throughout this proof we abuse notation and use 
boldfaced letters to denote either a pair of generators for a free group or a single one of these generators; it will be clear from the context which of these we mean.  For example, ``${\bf u}$-corridor'' is used to mean $ u_{(n-1)i}$-corridor, where $i$ is $1$ or $2$, as we do not need to distinguish between these.    
When we refer to, say, ``the word ${\bf u} x {\bf u}^{-1}$'' it is understood that both instances of ${\bf u}$ in the word refer to the same generator (either $u_{(n-1)1}$ or 
$ u_{(n-1)2}$). 
\end{remark}

\subsection{Geometry of ${\bf u}$-corridors}\label{subsec:corridor-geom}
The following is a complete list of the relations involving the generators ${\bf u} = {\bf u}_{n-1}$ in the presentation for $H_n$. 
\begin{enumerate}
\item 
Let $
E= \langle \stable{(n-1)}{1}, \dots , \stable{(n-1)}{2^{n-1}},
{\bf u}_{n-2} \rangle \simeq F_{2^n}\times F_2
$.  Recall that $G_{n-1}$ is the cone of $H_{n-1}$ over $E$  
with stable letters ${\bf u}$ and relations
\begin{equation*}
{\bf u}\,g \,{\bf u}^{-1}\, (\varphi(g))^{-1} =1 \quad
\text{where } g=\text{any generator for } E.
\end{equation*}
 
\item Recall that $H_n$ is a multiple HNN-extension of $G_{n-1}$ 
with stable letters 
$\stable nj$, with $1 \leq j \leq 2^n$.  The new relations involving ${\bf u}_{(n-1)}$ are the commuting relations:
\begin{equation*}
{\bf u}\,{\bf a}_{nj} \,{\bf u}^{-1}\, {\bf a}_{nj}^{-1}=1 \quad \text{ for } 
1 \leq j \leq 2^n.
\end{equation*}  
\end{enumerate}
Since these are all the relations involving the ${\bf u}$, it makes sense to talk about \emph{${\bf u}$-corridors} and \emph{${\bf u}$-annuli} in van Kampen diagrams over $H_n$.
The reader may refer to Section 7.2 of \cite{bridson} for the definitions and properties of corridors and annuli (called rings in \cite{bridson}).
By the assumption on $D$ in Remark~\ref{rmk:nonsing},
every edge labeled ${\bf u}$ in $\partial D$ is part of a non-trivial ${\bf u}$-corridor in $D$. Although $D$ may contain ${\bf u}$-annuli, only ${\bf u}$-corridors will play a key role 
in the argument below.

\medskip
\noindent {\bf Area-length inequality for ${\bf u}$-corridors in $D$ and $\Delta$.} 
The boundary of a ${\bf u}$-corridor $\mathcal C$ over $H_n$ or $\Gamma$ is labeled by a word
of the form ${\bf u} X_1 {\bf u} ^{-1} X_2$. 
We call the $X_i$  the \emph{horizontal boundary words} of $\mathcal C$. From the presentations of $H_n$ and $\Gamma$, we see that a horizontal boundary of a single $2$-cell involving ${\bf u}$ has length at most $3$.  
(For $\Gamma$ we measure lengths in the intrinsic metric, which may differ from the inherited one by a factor of $2$.) Thus
\begin{equation}\label{eq:corridor-area} 
\frac{|X_i|}{3} \leq \area{}{\mathcal{C}} \leq |X_i|, \quad \text{for } i=1,2.
\end{equation}

\medskip
\noindent {\bf Exponential distortion of corridors.}
The following lemma produces a ${\bf u}$-corridor over $\Gamma$ corresponding to a given 
${\bf u}$-corridor in $D$, and relates their lengths. 
\begin{lemma}[Exponential distortion of corridors]\label{lem:corr-dist}
Let $\mathcal C_D$ be a ${\bf u}$-corridor in $D$ with boundary label ${\bf u} X_1 {\bf u} ^{-1} X_2$. Then there exists a 
${\bf u}$-corridor $\mathcal C_{\Gamma}$ over $\Gamma$, with boundary label 
${\bf u} Y_1 {\bf u} ^{-1} Y_2$ such that the $Y_i$ are words in ${\bf a}$ with
$Y_i =_{H_n} X_i$ for $i=1,2$.  Furthermore, 
\begin{equation} \label{eq:length-comp}
|Y_i| \;\leq \; 3 \,(3^{3 |X_i|}).
\end{equation}
\end{lemma}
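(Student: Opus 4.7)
The plan is to build the corridor $\mathcal{C}_\Gamma$ by first exhibiting words $Y_1, Y_2$ in ${\bf a}$ with $Y_i =_{H_n} X_i$, then constructing $\mathcal{C}_\Gamma$ as a $\Gamma$-corridor of $|Y_1|$ cells, each realizing one instance of the defining relation ${\bf u} a^{\pm}{\bf u}^{-1} = \varphi(a^{\pm})$ for $a \in {\bf a}$. The bottom horizontal boundary is $\varphi(Y_1)$, which we then verify coincides with $Y_2$.

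First I would establish the structural claim that $X_1$ represents an element of $\langle {\bf a}\rangle \subset H_n$. Because $D$ is a van Kampen diagram for the $\Gamma$-word $w$, cutting $D$ along $\mathcal{C}_D$ produces two subdiagrams whose boundaries express $X_1$ as equal in $H_n$ to (the inverse of) a subword of $w$ read in the $H_n$-generators; hence $X_1 \in \Gamma$. Combining this with the fact that, by definition of a ${\bf u}$-corridor, $X_1$ is a word only in the ``${\bf u}$-ceiling'' generators of $H_n$ (namely $\stable{(n-1)}{j}$, ${\bf u}_{n-2}$ for $n \geq 2$, and $\stable{n}{k}$), and applying the trivial-intersection results of Lemma~\ref{lem:trivialintersection} to the splitting $\Gamma = \langle {\bf a}\rangle \rtimes_\theta \langle {\bf u}, {\bf b}\rangle$, I would conclude $X_1 \in \langle {\bf a}\rangle$. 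The argument for $X_2$ is analogous.

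Next I would construct each $Y_i$ as the reduced word in ${\bf a}$ representing the element $X_i \in \langle {\bf a}\rangle \simeq F_2$, and bound its length. The ``bad'' letters of $X_i$ (those not directly in $\langle {\bf a}\rangle$, e.g., $\stable{n}{k}$ or, in the case ${\bf a}={\bf u}_{n-2}^{-1}\stable{(n-1)}{i}$, any isolated occurrences of $\stable{(n-1)}{i}$ or ${\bf u}_{n-2}$) must cancel with neighbors in the free normal form, while the ``good'' pieces translate into ${\bf a}$-syllables of bounded length. A cascading rewriting argument, using that $\varphi$ expands word length by at most a factor of $3$ per application and that each letter of $X_i$ may participate in up to $3|X_i|$ nested rewrites as elements are substituted into the ${\bf a}$-normal form, yields $|Y_i| \leq 3\cdot 3^{3|X_i|}$.

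Finally, the compatibility $\varphi(Y_1) = Y_2$ follows because both words represent ${\bf u} X_1 {\bf u}^{-1} = X_2$ as elements of $\langle {\bf a}\rangle \simeq F_2$, hence coincide as reduced words. The corridor $\mathcal{C}_\Gamma$ then realizes the boundary label ${\bf u} Y_1 {\bf u}^{-1} Y_2$ as desired. The main obstacle is the first step: rigorously establishing $X_1 \in \langle {\bf a}\rangle$ inside $H_n$, which requires careful bookkeeping of the subgroup intersections arising from the iterated HNN-extension structure of $H_n$ and the coning/suspended-wing descriptions of its edge groups. The exponential length bound in the second step is plausible given the growth rate of $\varphi$, but it demands a precise combinatorial analysis of how the ${\bf u}$-ceiling letters reduce inside $F_2 = \langle {\bf a}\rangle$.
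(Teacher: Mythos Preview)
Your overall architecture matches the paper's: show $X_1$ represents an element of $\langle {\bf a}\rangle$, take $Y_1$ to be a word in ${\bf a}$ representing it, set $Y_2=\varphi(Y_1)$, and assemble the $\Gamma$-corridor. The compatibility argument $\varphi(Y_1)=Y_2$ and the construction of $\mathcal{C}_\Gamma$ are fine.

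The real gap is in the length bound. Your ``cascading rewriting argument'' and the phrase ``each letter of $X_i$ may participate in up to $3|X_i|$ nested rewrites'' do not identify a concrete mechanism, and as stated there is no reason the rewriting should terminate with exponential (rather than, say, tower-of-exponentials) control. The paper's proof supplies the missing engine: Britton's Lemma applied to the multiple HNN description of $H_n$ over $G_{n-1}$ with stable letters $\stable{n}{k}$. Since $X_1$ equals in $H_n$ a subword of $w\in\Gamma$, and the ceiling word $X_1$ lies in $G_{n-1}$ except for the $\stable{n}{k}$-letters, Britton's Lemma forces an innermost pinch $\stable{n}{k}\,v\,\stable{n}{k}^{-1}$ (or its inverse) with $v$ a word in the generators of $E=\langle \stable{(n-1)}{j},\,{\bf u}_{n-2}\rangle$ representing an element of the relevant edge group $\Gamma'=\langle {\bf a}'\rangle\rtimes_\theta\langle {\bf u},{\bf b}'\rangle$. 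Two structural facts then do the work: $E\cap\Gamma'=\langle{\bf a}'\rangle$, and $\langle{\bf a}'\rangle$ is a \emph{retract} of $E$. The retraction gives a replacement $v'$ in ${\bf a}'$ with $|v'|\le|v|$, so the pinch becomes $\varphi^{\pm 1}(v')$ of length at most $3|v|$. Iterating at most $|X_1|/2$ times removes all $\stable{n}{k}$-letters and yields a word $X_1'$ in the generators of $E$ with $|X_1'|\le 3^{|X_1|}$; one final application of the intersection/retraction pair gives $Y_1$ in ${\bf a}$ with $|Y_1|\le 3^{|X_1|}$, and then $|Y_2|\le 3|Y_1|$ together with $|X_1|\le 3|X_2|$ gives \eqref{eq:length-comp}.

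Without the retraction step (your sketch does not invoke it), nothing prevents the intermediate words $v'$ from being longer than $v$, and the iterated tripling bound collapses. So the ingredient you are missing is not just bookkeeping: it is the pair ``Britton pinch $+$ retraction onto $\langle{\bf a}'\rangle$'' that makes the exponential estimate go through.
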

 
\begin{proof}
Let $\Gamma'= \langle {\bf a}'\rangle \rtimes \langle {\bf u} , {\bf b}'\rangle$ 
be a group from the list  
of edge groups for $H_n$ (possibly different from $\Gamma$), and let $E$ be as defined in the beginning of subsection~\ref{subsec:corridor-geom}. The following two properties will be used repeatedly to construct $\mathcal{C}_{\Gamma}$. 

\medskip
\noindent{\bf Property 1.}
$E \cap \Gamma' = \langle {\bf a}' \rangle$
\begin{proof}
It is easy to see (using the normal form for semidirect products) that 
this is equivalent to the statement $E \cap \langle {\bf u} , {\bf b}' \rangle=1$. The latter statement follows easily from an elementary argument using 
${\bf u}$-corridors. 
\end{proof}

\medskip
\noindent{\bf Property 2.}
$\langle {\bf a}'\rangle$ is a retract of $E$. 
\begin{proof}
If $\langle {\bf a}' \rangle$ is generated by $\stable {(n-1)}i$ for some 
$i$, then the retraction is simply the projection of $E$ onto $\langle {\bf a}' \rangle$.  If it is a subgroup generated by ${\bf u}_{n-1}^{-1} \stable{(n-1)}i$ for some $i$, then observe that the following map is a retraction:
\[
\stable {(n-1)}j \mapsto 1 \; (j\neq i)\; ; \quad \stable {(n-1)}i\mapsto {\bf u}_{n-1}^{-1} \stable{(n-1)}i \; ;\quad {\bf u}_{n-1} \mapsto 1.
\]
\end{proof}

Since the horizontal boundary $X_1$ of $\mathcal C_D$ has endpoints on the boundary of $D$, there is a subword $W$ of $w$ such that 
$W^{-1}X_1$ represents $1$.  By Britton's Lemma applied to the multiple HNN extension description of $H_n$ in Table~\ref{groups-table}, we conclude that $X_1$ must have an innermost subword of the form ${\stable ni}v\stable ni^{-1}$ or ${\stable ni}^{-1}v\stable ni$ for some $i$. Here $v$ is a word in the generators of $E$ that represents an element of 
the edge group corresponding to $\stable ni$, say $\Gamma'=\langle {\bf a}'\rangle \rtimes \langle {\bf u} , {\bf b}'\rangle$.  
By Property 1, there exists a (reduced) word $v'$ in the generators ${\bf a}'$ representing the same element as $v$.  By Property 2
 we have $|v'| \leq |v|$ (since retracts are length-non-increasing).  Then if the innermost subword is of the form 
 ${\stable ni}v\stable ni^{-1}$, it can be replaced with $\varphi(v')$, and if it is of the form 
 ${\stable ni}^{-1}v\stable ni$, it can be replaced with $\varphi^{-1}(v')$.   
Both $\varphi(v')$ and $\varphi^{-1}(v')$ have length at most $3|v|$.

Repeat this procedure (at most $|X_1|/2$ times) until all instances of $\stable ni$ have been eliminated.  We obtain a word $X_1'$ in 
the generators of $E$ of length 
$|X_1'| \leq 3^{|X_1|/2}\leq 3^{|X_1|}$.  

Since $X_1'$ represents the same word as $W$, which is a word in $\Gamma$, 
Properties 1 and 2 again apply to produce a word $Y_1$ in the generators ${\bf a}$, representing the same group element as ${X_1'}$. Furthermore, $|Y_1| \leq |X_1'| \leq 3^{|X_1|}$.

Let $Y_2$ be the word $\varphi(Y_1)$ (where $\varphi$ simply acts individually on each generator).  Then ${\bf u} Y_1 {\bf u}^{-1}Y_2$ is the boundary of a corridor (i.e. a van Kampen diagram consisting of a single corridor) over $\Gamma$, which we call $\mathcal C_{\Gamma}$.  
Since $|Y_2| \leq 3 |Y_1|$ and $|X_1| \leq 3 |X_2|$, we have 
$|Y_2| \leq 3(3^{3|X_2|})$. Clearly $X_1$ and $Y_1$ also satisfy this inequality. 
\end{proof}

\subsection{Construction of $\Delta$} \label{sec:delta}
A van Kampen diagram $\Delta$ for $w$ over $\Gamma$ is obtained 
from $D$ by the following sequence of moves. 
\begin{enumerate}
\item Remove from $D$ all of the open cells of $D \setminus \partial D$ except for open $1$-cells labeled ${\bf u}$ and open $2$-cells whose closures have an edge labeled ${\bf u}$. The result is a circle labeled $w$, with a finite collection of ``open'' ${\bf u}$-corridors attached. Each open ${\bf u}$-corridor is topologically $[0,1] \times (0,1)$, with $0 \times (0,1)$ and $1 \times (0,1)$ identified with open $1$-cells in the circle.  Complete this to get a band complex, i.e.\ a circle with a collection of closed ${\bf u}$-corridor bands attached.  The ${\bf u}$-corridor structure on the closed bands is obtained by pulling back the cell structure and labeling from $D$ as in Section 7.2 of~\cite{bridson}.  
\item Replace each ${\bf u}$-corridor in this band complex with the corresponding corridor over $\Gamma$, guaranteed by Lemma~\ref{lem:corr-dist}.
The result is another band complex, which we denote by $B$.  
\item Remove the open  $1$-cells labeled ${\bf u}$ and the open $2$-cells of $B$ to obtain a disjoint union of circles.  The labels of these circles are called \emph{complementary words}. 

The complementary words $w_i$ are words in  the generators ${\bf a}$ and ${\bf b}$ that represent the trivial element of $\Gamma$.  Let $\Delta_i$ be 
an  area-minimizing van Kampen diagram  over $\Gamma$ for  $w_i$.  
\item Define 
\[
\Delta \; =\;  \left(B \, \sqcup \, \left(\sqcup_{i} \Delta_i \right) \right)/ \sim
\]
where $\sim$ identifies the loop corresponding to $w_i$ in $B$ with $\partial \Delta_i$ for each $i$.
\end{enumerate}

Note that if there are no ${\bf u}$-corridors in $D$, the band complex $B$ is just a circle, and there is just one complementary word $w_1=w$.

\subsection{Upper bound for $\area{}{\Delta}$}
The area of $\Delta$ is simply the sum of the areas of the ${\bf u}$-corridors and the areas of the $\Delta_i$.

We first obtain an upper bound on the total area contribution of the ${\bf u}$-corridors.  Define 
\[ 
L= \begin{cases}\max \;  \{|v| \;|\; v \text{ a horizontal boundary word of a } 
{\bf u} \text{-corridor in } \Delta\} \\ 1\; \text{ if there are no } {\bf u} \text{-corridors} \end{cases}
\]

It follows from inequality~\eqref{eq:corridor-area} that $\area{}{\mathcal{C}} \leq L$ for any ${\bf u}$-corridor $\mathcal{C}$.
Since each ${\bf u}$-corridor intersects the boundary of $\Delta$ in two edges labeled ${\bf u}$, there are at most 
${|w|}/{2}$ such corridors. Thus 
\begin{equation}\label{eq:ucorr-area}
\text{total area of the } {\bf u}\text{-corridors }\leq \frac {|w| L}2.
\end{equation}

Since $\langle {\bf a}, {\bf b} \rangle < \Gamma $ is 
isomorphic to $F_2\times F_2$, and the complementary words $w_i$ are words in ${\bf a}$ and ${\bf b}$, we have 
\begin{equation}\label{eq:comp-area1}
\sum_{i} \area{}{w_i} \leq \sum_{i} (|w_i|)^2
\leq \left( \;\sum_{i} |w_i| \;\right)^2
\end{equation}

From the definition of the $w_i$, we have:
\begin{equation}\label{eq:comp-area2}
\sum_{i} |w_i| \, \leq \, 2(\#\{{\bf u}\text{-corridors}\})L
\,+ \, \left(|w|-2 \#\{{\bf u}\text{-corridors}\}\right) \,
\leq \,|w| L, 
\end{equation}

Putting together the inequalities~(\ref{eq:ucorr-area}),~(\ref{eq:comp-area1}) and~(\ref{eq:comp-area2}), we have
\[
\area{}{\Delta} \;\leq \;\frac{|w|L}2 + (|w|L)^2 \;\leq \;(2|w|L)^2.
\]
The proof of Lemma~\ref{lem:area-distortion} will now follow easily from the above estimate, together with the following two facts. (Just take $\beta_n= 20 \beta$, where $\beta $ is the constant from Fact 2, and recall that $\area{}{D} =  \area{H_n}{w}$.)

\medskip

\noindent {\bf Fact 1.} $|w| \leq 10 \area{}{D}$. 
\begin{proof}
The assumption that $D$ is a topological disk
 implies that each edge of $\partial D$ is part of the boundary of a $2$-cell in
  $D$.  
Since the maximum length of a relation in $H_n$ is $10$, the area of $D$ is at least $\frac{|w|}{10}$.
\end{proof}

\noindent {\bf Fact 2.} There exists a constant $\beta$, independent of $w$, such that  
\begin{equation}\label{eq:logL-squared}
L \leq \beta 3^{ \sqrt{\beta \area{}{D}}}.
\end{equation}
\begin{proof}
There are two cases, depending on the relative sizes of $|w|$ and $L$.

\medskip
\noindent {\it Case (i):} $|w| > (\logl L)^2 $.

By Fact 1 we have 
$L < 3^{\sqrt{|w|}} \leq 3^{\sqrt{10 \area{}{D}}}$, and so inequality~\eqref{eq:logL-squared} holds with $\beta \geq 10$.

\medskip
\noindent {\it Case (ii):} $|w| \leq (\logl L)^2$.

In this case, we establish inequality~\eqref{eq:logL-squared} by obtaining a lower bound on $\area{}{D}$.   More precisely, 
we show that for sufficiently large $L$, $\area{}{D} \geq (\logl L)^2/144$. Recall that $L$ is the maximal length of a horizontal boundary word of a 
${\bf u}$-corridor in $\Delta$.  The idea of the proof is as follows:  since $|\partial \Delta |= |w|$ is relatively small compared to $L$, the existence of a ${\bf u}$-corridor of length $L$ forces $\Delta$ to have a large number of long ${\bf u}$-corridors. This implies that $D$ also has a large number (order $\logl L$) of long (length order $\logl L$) ${\bf u}$-corridors in $D$.  The areas of these corridors of $D$ add up to the required lower bound on $\area{}{D}$. 

We first introduce the notion of the level of a ${\bf u}$-corridor in order to compare lengths of corridors and obtain the above estimates.

\smallskip

\noindent{\bf Levels for ${\bf u}$-corridors.}  
Choose a base corridor $\mathcal C_0$ of $\Delta$ that has a horizontal boundary word of 
maximal length $L$, and define this to be at level $0$.  Now define a corridor $\mathcal C$ of $
\Delta$ to be at level $i$ if 
\begin{enumerate}
\item $\mathcal C$ is not at level $0, \dots, i-1$ and 
\item There exists a path in $\Delta$ connecting $\mathcal C$ to a corridor at level $i-1$ which does not intersect any other ${\bf u}$-corridors.   
\end{enumerate}
Define the level of a ${\bf u}$-corridor in $D$ to be the level of the corresponding ${\bf u}$-corridor in $\Delta$.

Each ${\bf u}$-corridor in $\Delta$ other than $\mathcal C_0$ inherits a notion of top and bottom (with the convention that the horizontal boundary closer to $\mathcal C_0$ is the bottom). For $i>0$ define $\botbound i$ (resp.~$\topbound i$) to be the set of edges which are part of the bottom (resp.~top) boundary of a ${\bf u}$-corridor at level $i$.  Define $\topbound 0$ to be the set of edges which are part of the boundary of $\mathcal C_0$. 

In what follows, $|\topbound i|$ is referred to as the \emph{total ${\bf u}$-corridor length} at level $i$.  Note that for $i>0$,
\[
\frac{|\topbound i|}{3} \leq \text{ total area of } {\bf u}\text{-corridors at level }i \leq |\topbound i|.
\]

Recall that in the construction of $\Delta$ in Section~\ref{sec:delta} we obtain a band complex $B$, basically a circle cut by ${\bf u}$-corridors. Consider a topological circle in $B$ corresponding to a complementary word $w_j$. Assigning to each ${\bf u}$-corridor in $B$ the level it earns in $\Delta$, we see that each such circle passes along the top of a single corridor at level $i-1$ and the bottom of possibly several corridors at level $i$, for some $i$. We say that such a complementary word $w_j$, as well as its van Kampen diagram $\Delta_j$, lies \emph{between} levels $i-1$ and $i$. We define $\deltabound i$ to consist of those edges in $\partial\Delta$ that are also part of a complementary word lying between levels $i-1$ and $i$. Note that $\deltabound i$ includes edges of $\partial\Delta \cap \topbound {i-1}$, and $\partial\Delta \cap \botbound i$.  See Figure~\ref{fig:levels}.

\begin{figure}[ht]
\begin{center}
\psfrag{i}{{\footnotesize Level $i$ corridor}}
\psfrag{m}{{\footnotesize Level $i-1$ corridor}}
\psfrag{c}{{\footnotesize The van Kampen diagram $\Delta_j$}}
\includegraphics[width=120mm]{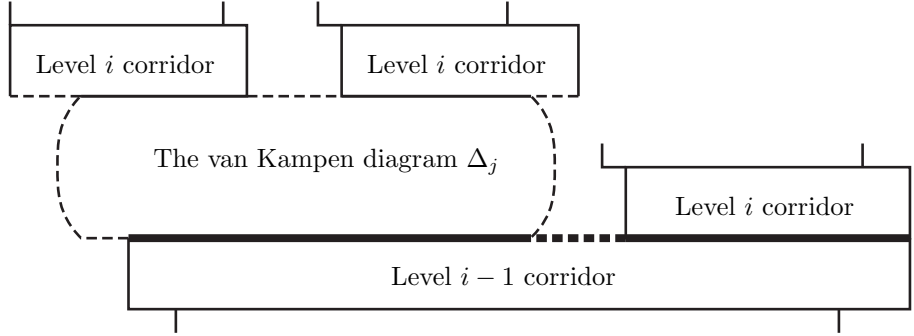}
\end{center}
\caption{A piece of $\Delta$. The dashed lines lie in $\deltabound i$, while the bold line lies in $\topbound {i-1}$. Note that the van Kampen diagram $\Delta_j$ consists of the rounded rectangle along with all the other edges in $\topbound {i-1}$, $\botbound i$, and $\deltabound i$.}
\label{fig:levels}
\end{figure}

\medskip
\noindent {\bf Iterated scaling inequality.}
For $i \geq 0$, consider an edge in $\topbound {i-1}$. Recall that all such edges have labels in ${\bf a}$. If it is not adjacent to a $2$-cell in some $\Delta_j$, then it must belong either to $\botbound i$ (adjacent to a level $i$ corridor) or to $\deltabound i$ (part of $\partial\Delta$).

On the other hand, suppose this ${\bf a}$-edge is part of the boundary of a $2$-cell in some $\Delta_j$. Since every such $2$-cell is labeled by a commuting relation of the form ${\bf a}{\bf b}{\bf a}^{-1}{\bf b}^{-1}$, this $2$-cell is the start of an ${\bf a}$-corridor through $\Delta_j$, with boundary of the form ${\bf a}X_1{\bf a}^{-1}X_2$, where the $X_i$ are words in ${\bf b}$.

Assuming the ${\bf a}$ term in the expression above corresponds to the original edge in $\topbound {i-1}$, we claim that the edge corresponding to the ${\bf a}^{-1}$ term must lie either in $\botbound i$ or $\deltabound i$.

To see this, note that from the description of the complementary words above, it is clear that the only other option is that this edge lies in the same component of $\topbound {i-1}$, along the same ${\bf u}$-corridor (forming an arch in Figure~\ref{fig:levels}). Suppose for contradiction that this is the case. Then this ${\bf a}$-corridor forms, along with a portion of the original ${\bf u}$-corridor, an annulus of $2$-cells in $\Delta$, the inner boundary of which corresponds to a product of a nontrivial word in ${\bf a}$ with a nontrivial word in ${\bf b}$. As no such product can be trivial $\Gamma$, we deduce that any ${\bf a}$-corridor with one boundary ${\bf a}$-edge in $\topbound {i-1}$ has its other boundary ${\bf a}$-edge in either $\botbound i$ or $\deltabound i$.

It follows from this analysis that $|\topbound {i-1}| \leq |\botbound i| +| \deltabound i|$, for $i\geq1$. Since conjugation by ${\bf u}^{\pm1}$ scales ${\bf a}$-words by at most a factor of $3$, we have 
$|\botbound i| \leq 3|\topbound i|$.
Combining these two inequalities, we have 
\[
|\topbound {i-1}| \leq 3 |\topbound {i} | + | \deltabound i|.
\]
By iterating this and noting that $L < |\topbound 0|$,
we obtain:
\[
L < |\topbound 0| \leq 3^i |\topbound i| + \sum_{j=1}^{i}3^{j-1} |\deltabound j|, \quad \text{ for } i>0. 
\]
Since $\sum_{j=1}^{i} |\deltabound j| \leq |w|$ for each $i$, this implies
\begin{equation}\label{eq:Li}
L  < 3^i |\topbound i| + 3^i |w|.
\end{equation}

\medskip
\noindent {\bf Large $L$ implies many long levels.}
Now we show that if $L$ is sufficiently large, then there are at least $\frac{\logl L}4$ levels in $\Delta$, each with total ${\bf u}$-corridor length at least $\sqrt L$.

Since $ \lim_{L \to \infty} \frac{(\logl L)^2}{\sqrt{L}} = 0$, there exists $P>0$ such that $(\logl L)^2 \leq \sqrt{L}$ for all $L\geq P$. 
Note that $P$ depends only on the functions $(\logl x)^2$ and $\sqrt x$ and not on $w$. 
For $L \geq P$, 
inequality~\eqref{eq:Li} and the base inequality of Case (ii) give
\[
L \leq 3^i|\topbound i| + 3^i |w| \leq 3^i|\topbound i| + 3^i (\logl L)^2\leq 3^i |\topbound i| +3^i \sqrt{L}.
\]

Rearranging gives $|\topbound i| \geq  3^{-i}L -\sqrt{L}$. 
The reader can now verify that if $i \leq \frac{\logl L}4$ and $L \geq 16$, then $3^{-i}L \geq 2 \sqrt L$.  As a consequence, we have: $$|\topbound i|\geq \sqrt L.$$

In summary, if $L\geq \max\{ P, 16\}$, then there are at least $\frac{\logl L}4$ levels in $\Delta$, each with total ${\bf u}$-corridor length at least $\sqrt L$.

\medskip
\noindent {\bf Relating $\area{}{D}$ and $(\logl L)^2$ for large $L$.}
Now we can estimate the area of $D$ from below using ${\bf u}$-corridors.
\begin{align*}
\area{}{D} & \geq \text{total area of } u\text{-corridors in } D\\
& \geq \text{total area of } u\text{-corridors in } D \text{ at or below level } \frac{\logl L}{4} \\
& \geq \left( \frac{\logl L}{4} \right) \left(\frac{\logl (\sqrt L/3)}{9} \right) \\
&= \frac{(\logl L)^2}{72} - \frac{\logl L}{36}\\
&\geq \frac{(\logl L)^2}{144} \qquad \text{ provided } L \geq 3^4.
\end{align*}
The second term in the third inequality above is obtained as follows. 
Note that $\sqrt L$ is a lower bound for the total ${\bf u}$-corridor length at level $i$ in $\Delta$, and hence is a lower bound for a sum $\sum |Y_j|$, where $j$ runs over an index set for all ${\bf u}$-corridors at level $i$, and the $Y_j$ are horizontal boundary words of these corridors. Thus, by inequality~\eqref{eq:length-comp},
\[
\sqrt L\; \leq \;\sum |Y_j|\; \leq \;\sum 3\,(3^{3|X_j|}) \;\leq \; 3\,(3^{3{\sum |X_j|}}),
\]
and so the total ${\bf u}$-corridor length at level $i$ in $D$ (which is ${\sum |X_j|}$ in the inequality above) is at least
$\logl (\sqrt L/3)/3$.  Finally, by inequality~\eqref{eq:corridor-area}, the total area of ${\bf u}$-corridors at level $i$ in $D$ is at least 
$\logl (\sqrt L/3)/9$.

\medskip
\noindent{\bf Summary.} We have shown that in Case (i), we have $L \leq 3^{\sqrt{10 \area{}{D}}}$, and in Case (ii), we have $L \leq 3^{\sqrt{144 \area{}{D}}}$, provided 
$L \geq \max\{ 3^4, 16, P\}$.  Thus inequality~\eqref{eq:logL-squared} holds for all $L$, provided we take $\beta = \max\{ 10, 144, 3^4, 16, P\} =  \max \{144, P\}$.  Since $P$ was independent of $w$, so is $\beta$. 
\end{proof}

\begin{remark}
The notion of area distortion as a group invariant is defined in Section 2 of~\cite{gersten-area-distortion}.  Lemma~\ref{lem:area-distortion} provides an upper bound for the area distortion of 
$\Gamma$ in $H_n$.   The reader can verify that the boundary words for the van Kampen diagrams $\ddi{i}{n}{N}$ from Section ~\ref{sec:lower} establish the lower bounds for this distortion.  Thus the distortion of $\Gamma$ in $H_n$ is $f(x)\simeq e^{\sqrt x}$.  It would be interesting to find other pairs $(G,H)$ of 
type $({\mathcal F}_{3}, {\mathcal F}_{2})$ 
where area distortion can be explicitly computed.  
For example, the subgroup of the group $H_n$  which is  generated by $\{\stable{k}{i} \, | \, k \leq n, 
i = 1,2\}$ should have area distortion $\exp^n\sqrt{x}$. 
\end{remark}

\end{sloppypar}

\bibliographystyle{siam}
\bibliography{refs-dehn}

\end{document}